\newtheorem{theorem}{Theorem}[section]
\newtheorem{lemma}[theorem]{Lemma}
\newtheorem{corollary}[theorem]{Corollary}
\theoremstyle{definition}
\theoremstyle{remark}
\numberwithin{equation}{section}
\newcommand{\mmod}[1]{\,\,(\text{mod}\,\,#1)}
\def\bfa{{\mathbf a}}
 \def\bfe{{\mathbf e}}
\def\bfg{{\mathbf g}}
\def\bfh{{\mathbf h}}
\def\bfm{{\mathbf m}}
\def\bfn{{\mathbf n}}
\def\bfu{{\mathbf u}}
\def\bfv{{\mathbf v}}
\def\bfw{{\mathbf w}}
\def\bfx{{\mathbf x}}
\def\bfy{{\mathbf y}}
\def\bfz{{\mathbf z}}
\def\calA{{\mathcal A}}  
\def\calB{{\mathcal B}} 
\def\calC{{\mathcal C}} 
\def\calD{{\mathcal D}}
\def\calI{{\mathcal I}}
\def\calJ{{\mathcal J}}
\def\calM{{\mathcal M}}
\def\calP{{\mathcal P}}
\def\calS{{\mathcal S}}
\def\calT{{\mathcal T}}
\def\Gtil{\widetilde G}
\def\dbC{{\mathbb C}}\def\dbF{{\mathbb F}}\def\dbN{{\mathbb N}}
\def\dbR{{\mathbb R}}
\def\dbZ{{\mathbb Z}}
\def\grf{{\mathfrak f}}\def\grF{{\mathfrak F}}
\def\grG{{\mathfrak G}}
\def\grJ{{\mathfrak J}}
\def\grm{{\mathfrak m}}\def\grM{{\mathfrak M}}
\def\grS{{\mathfrak S}}
\def\alp{{\alpha}} \def\bfalp{{\boldsymbol \alpha}}
\def\bet{{\beta}}  \def\bfbet{{\boldsymbol \beta}}
\def\gam{{\gamma}} \def\Gam{{\Gamma}}
\def\del{{\delta}} \def\Del{{\Delta}}
\def\zet{{\zeta}} \def\bfzet{{\boldsymbol \zeta}} 
\def\bfeta{{\boldsymbol \eta}}
\def\tet{{\theta}}  \def\Tet{{\Theta}}
\def\lam{{\lambda}}  
\def\bfnu{{\boldsymbol \nu}}
\def\bfxi{{\boldsymbol \xi}}
\def\sig{{\sigma}} \def\Sig{{\Sigma}} \def\bfsig{{\boldsymbol \sig}}
\def\bftau{{\boldsymbol \tau}}
\def\Ups{{\Upsilon}} 
\def\ome{{\omega}} \def\Ome{{\Omega}}
\def\d{{\partial}}
\def\eps{\varepsilon}
\def\le{\leqslant} \def\ge{\geqslant}
\def\d{{\,{\rm d}}}
\def\ldbrack{\lbrack\;\!\!\lbrack} \def\rdbrack{\rbrack\;\!\!\rbrack}
\begin{document}
\title[Vinogradov's mean value theorem]{Vinogradov's mean value theorem\\ via efficient congruencing}
\author[Trevor D. Wooley]{Trevor D. Wooley$^*$}
\address{School of Mathematics, University of Bristol, University Walk, Clifton, 
Bristol BS8 1TW, United Kingdom}
\email{matdw@bristol.ac.uk}
\thanks{$^*$Supported by a Royal Society Wolfson Research Merit Award.}
\subjclass[2010]{11L15, 11L07, 11P05, 11P55}
\keywords{Exponential sums, Waring's problem, Hardy-Littlewood method}
\date{}
\begin{abstract} We obtain estimates for Vinogradov's integral which for the first time approach those conjectured to be the best possible. Several applications of these new bounds are provided. In particular, the conjectured asymptotic formula in Waring's problem holds for sums of $s$ $k$th powers of natural numbers whenever $s\ge 2k^2+2k-3$.\end{abstract}
\maketitle

\section{Introduction} Exponential sums of large degree play a prominent role in the analysis of problems spanning the analytic theory of numbers, and in consequence the estimation of their mean values is of central significance. Some seventy-five years ago, I.~M.~Vinogradov \cite{Vin1935} obtained new estimates for such mean values by exploiting the translation-dilation invariance of associated systems of Diophantine equations. Thereby, he was able to derive powerful new estimates for exponential sums going well beyond those made available via the differencing methods of Weyl and van der Corput. Decisive progress followed in such topics as Waring's problem, the zero-free region for the Riemann zeta function, and the distribution modulo $1$ of polynomial sequences (see \cite{Vin1947}, \cite{Vin1958} and \cite{Wal1963}). Following a decade or so of technical improvement, Vinogradov's mean value theorem evolved into a form little different from that familiar to present day workers, one which for problems of degree $d$ falls short of the strength expected by a factor of order $\log d$. In this paper we obtain significant improvements in estimates associated with Vinogradov's mean value theorem, coming within a stone's throw of the sharpest possible bounds. As we explain in due course, progress of a similar scale may now be realised in numerous allied problems.\par

In order to describe our conclusions, we must introduce some notation. When $k$ is a natural number and $\bfalp \in \dbR^k$, we consider the exponential sum
\begin{equation}\label{1.1}
f_k(\bfalp;X)=\sum_{1\le x\le X}e(\alp_1x+\ldots +\alp_kx^k),
\end{equation}
where $e(z)$ denotes $e^{2\pi iz}$. It follows from orthogonality that, for natural numbers $s$, the mean value
\begin{equation}\label{1.2}
J_{s,k}(X)=\int_{[0,1)^k}|f_k(\bfalp ;X)|^{2s}\d\bfalp
\end{equation}
counts the number of integral solutions of the system of equations
\begin{equation}\label{1.3}
x_1^j+\ldots +x_s^j=y_1^j+\ldots +y_s^j\quad (1\le j\le k),
\end{equation}
with $1\le x_i,y_i\le X$ $(1\le i\le s)$. Motivated by a heuristic application of the circle method, it is widely expected that whenever $\eps>0$, one should have\footnote{Here and throughout, implicit constants in Vinogradov's notation $\ll$ and $\gg$ depend at most on $s$, $k$ and $\eps$, unless otherwise indicated.}
\begin{equation}\label{1.4}
J_{s,k}(X)\ll X^\eps (X^s+X^{2s-\frac{1}{2}k(k+1)}).
\end{equation}
Indeed, the discussion surrounding \cite[equation (7.5)]{Vau1997} supplies an $\eps$-free version of such a conjecture for $k>2$. The corresponding lower bound
\begin{equation}\label{1.5}
J_{s,k}(X)\gg X^s+X^{2s-\frac{1}{2}k(k+1)},
\end{equation}
meanwhile, is easily established (see \cite[equation (7.4)]{Vau1997}). The main conclusion of this paper, the proof of which we complete in \S7, is that the estimate (\ref{1.4}) holds whenever $s\ge k(k+1)$.

\begin{theorem}\label{theorem1.1} Suppose that $s$ and $k$ are natural numbers with $k\ge 2$ and $s\ge k(k+1)$. Then for each $\eps>0$, one has $J_{s,k}(X)\ll X^{2s-\frac{1}{2}k(k+1)+\eps}$.
\end{theorem}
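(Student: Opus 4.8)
The plan is to reduce at once to the critical index $s = k(k+1)$: for $s > k(k+1)$ one writes $|f_k(\bfalp;X)|^{2s} = |f_k(\bfalp;X)|^{2(s-k(k+1))}\,|f_k(\bfalp;X)|^{2k(k+1)}$ and estimates the first factor trivially by $X^{2(s-k(k+1))}$, so that $J_{s,k}(X) \le X^{2(s-k(k+1))} J_{k(k+1),k}(X)$ and the general bound follows from the single case $s = k(k+1)$. Introduce the critical exponent $\lambda_{s,k}$, the infimum of those $\lambda$ for which $J_{s,k}(X) \ll X^{\lambda+\eps}$ holds for every $\eps > 0$; by (\ref{1.5}) and the trivial bound $J_{s,k}(X) \le X^{2s}$ one has $2s - \tfrac12 k(k+1) \le \lambda_{s,k} \le 2s$, so the defect $\Delta_{s,k} = \lambda_{s,k} - (2s - \tfrac12 k(k+1))$ lies in $[0, \tfrac12 k(k+1)]$, and the theorem is equivalent to the assertion $\Delta_{s,k} = 0$. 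The strategy is to prove a self-improving inequality $\Delta_{s,k} \le c_N\,\Delta_{s,k} + \delta_N$ valid for each $N$, in which $c_N, \delta_N \to 0$ as $N \to \infty$ — the decay of $c_N$ being precisely what the hypothesis $s \ge k(k+1)$ supplies — whence $\Delta_{s,k} = 0$.

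The engine is the method of \emph{efficient congruencing}. Fix a small parameter $\theta > 0$ and a prime $p$ with $p \asymp X^\theta$, and put $M = p$. For an integer $c \ge 0$ and a residue $\xi$ write $\mathfrak{f}_c(\bfalp;\xi) = \sum e(\alp_1 x + \cdots + \alp_k x^k)$, the summation being over $1 \le x \le X$ with $x \equiv \xi \pmod{p^c}$, so that $\mathfrak{f}_0(\bfalp;\xi) = f_k(\bfalp;X)$. For integers $b > a \ge 0$ introduce conditioned mean values $I_{a,b}(X;\xi,\eta)$ by integrating over $[0,1)^k$ an appropriate product of powers of $\mathfrak{f}_a(\bfalp;\xi)$ and $\mathfrak{f}_b(\bfalp;\eta)$, with $\xi$ and $\eta$ restricted to distinct residue classes modulo $p$, and let $I_{a,b}(X)$ denote the maximum over admissible $\xi, \eta$. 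The first step is a \emph{conditioning} argument: sorting the $2s$ variables underlying $J_{s,k}(X)$ according to their residues modulo $p$ and then modulo $p^k$, and discarding the smaller diagonal-type contribution in which many variables share a residue modulo $p$, one obtains a bound $J_{s,k}(X) \ll X^\eps\big(M^{A}\,I_{1,k}(X) + \text{(diagonal)}\big)$ with an explicit exponent $A$.

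The core of the argument is the \emph{congruencing lemma}, an inequality trading a configuration with moduli $p^a, p^b$ for one with the wider gap $p^b, p^{kb}$ at a reduced scale. It is proved by first applying H\"older's inequality to reduce the mean value to one in which only the least possible number of variables is genuinely free, and then invoking the essential arithmetic input: if two variables $x, y$ lying in a common class modulo $p^a$ occur in a solution of the system (\ref{1.3}) taken modulo $p^{kb}$ alongside $k-1$ further variables that are distinct modulo $p$, then the relevant Vandermonde-type determinant is a unit modulo $p$, and a Hensel lifting argument forces $x \equiv y$ to a far higher power of $p$ than membership in a class modulo $p^a$ alone would impose. Re-expressing this congruence count through the translation-dilation invariance of (\ref{1.3}) — via $x = \xi + p^a z$, which restores a mean value of the same shape with the summation range contracted by a factor $p^a$ — delivers the recursion relating $I_{a,b}(X)$ to $I_{b,kb}$ at the smaller scale, up to an $X^\eps$ loss, explicit powers of $M$, and a fractional power of $J_{s,k}(X)$. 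I expect this lemma to be the principal obstacle: one must apportion the variables between the two blocks so that H\"older is not too wasteful while the congruence constraint stays strong, and it is precisely the bookkeeping in this trade-off that closes only when $s \ge k(k+1)$.

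Finally I would iterate. Starting from the bound for $J_{s,k}(X)$ in terms of $I_{1,k}(X)$, one applies the congruencing lemma $N$ times — $N$ may be taken as large as one pleases by choosing $\theta$ small enough — widening the modulus gap at each stage until the exponential sum attached to the largest modulus runs over a range so short as to admit a trivial estimate. Tracking the exponents of $X$, of $M \asymp X^\theta$, and of the accumulated fractional powers of $J_{s,k}(X)$ through the iteration, and then optimizing in $\theta$, one arrives at the advertised inequality $\Delta_{s,k} \le c_N\,\Delta_{s,k} + \delta_N$ with $c_N, \delta_N \to 0$ as $N \to \infty$, the contraction $c_N \to 0$ being exactly where the assumption $s \ge k(k+1)$ is used. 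Hence $\Delta_{s,k} = 0$, which yields $J_{k(k+1),k}(X) \ll X^{\frac32 k(k+1)+\eps}$ and therefore, by the reduction of the first paragraph, the stated estimate for every $s \ge k(k+1)$; the case $k = 2$, where the conclusion is in any event classical, is covered as well.
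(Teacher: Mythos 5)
Your outline is the paper's proof: reduce to $s=k(k+1)$, normalise against the infimal exponent, fix a prime $p\asymp X^\tet$ once and for all, set up conditioned mean values $I_{a,b}$, prove a congruencing lemma passing from moduli $(p^a,p^b)$ to $(p^b,p^{kb})$ at a contracted scale, and iterate; the arithmetic input you describe via Vandermonde/Hensel lifting is realised in the paper via Newton's formulae (Lemma \ref{lemma4.1}), which is the same mechanism. Where you have misjudged the difficulty, though, is in locating the obstacle. The H\"older apportionment in the congruencing step (Lemma \ref{lemma6.1}) is relatively routine. The two genuine sources of pain, which §5 and §7 of the paper exist to handle, are as follows. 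First, \emph{well-conditioning}: the congruence count needs the $k$-tuple to lie in distinct classes modulo $p$ (or modulo $p^{a+1}$ once everything is already in one class mod $p^a$), but $p$ is fixed at the outset by Lemma \ref{lemma3.2}, so you cannot re-choose the prime at each stage to make this automatic. This forces the conditioning machinery of §5 — the conditioned exponential sums $\grF^\bfsig_a$, the distinction between $I_{a,b}$ and $K_{a,b}$, and Lemmata \ref{lemma5.1}--\ref{lemma5.3}, which at each stage may push the second index from $b$ to $b+h$ for some unknown $h\ge 0$. Second, \emph{iteration control}: the shift $h$ in $K_{b,kb+h}$ is not under your control, and if it is ever positive the modulus gap widens faster than $k^n$, so the iteration "accelerates" toward the endpoint $p^{b_n}\approx X$ and you burn through your budget of $N$ steps too quickly. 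Lemma \ref{lemma7.1} and the bookkeeping with the auxiliary sequences $(\psi_n),(c_n),(\gam_n)$ in Lemma \ref{lemma7.2} are there precisely to show that either $h$ stays small or the compensating factor $M^{-7kh/4}$ makes up the loss; this is where $s\ge k^2$ (equivalently $s+k\ge k(k+1)$) is actually used, and the paper's closing remark to §7 emphasises that the control is especially delicate near the threshold $s=k^2$. So your plan is sound in architecture, but the bulk of the work will land on these two points rather than on the H\"older trade-off you flagged.
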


If valid, the conjectured bound (\ref{1.4}) would imply a conclusion of the same shape as that of Theorem \ref{theorem1.1} provided only that $s\ge \frac{1}{2}k(k+1)$. In some sense, therefore, Theorem \ref{theorem1.1} comes within a factor $2$ of the best possible result of its type. For additive Diophantine systems of large degree $k$, this is the first occasion on which a conclusion so close to the best possible has been established, for in all previous results one misses the conjectured bounds by a factor of order $\log k$.\par

A comparison with previous results on Vinogradov's mean value theorem deserves a discussion in two parts. The original method of Vinogradov \cite{Vin1935} for estimating $J_{s,k}(X)$ was refined by means of the $p$-adic argument of Linnik \cite{Lin1943}, and achieved its most polished form in the work of Karatsuba \cite{Kar1973} and Stechkin \cite{Ste1975}. Thus, for each natural number $s$ with $s\ge k$, one has a bound of the shape
\begin{equation}\label{1.7}
J_{s,k}(X)\le D(s,k)X^{2s-\frac{1}{2}k(k+1)+\eta_{s,k}},
\end{equation}
where $D(s,k)$ is independent of $X$, and $\eta_{s,k}={\textstyle{\frac{1}{2}}}k^2(1-1/k)^{[s/k]}\le k^2e^{-s/k^2}$. For large integers $k$, the exponent $\eta_{s,k}$ is appreciably smaller than $1/k$ as soon as $s\ge 3k^2(\log k+\log \log k)$. When $s$ is sufficiently large in terms of $k$, this observation permits the proof of an asymptotic formula of the shape
\begin{equation}\label{1.8}
J_{s,k}(X)\sim \calC(s,k)X^{2s-\frac{1}{2}k(k+1)},
\end{equation}
wherein $\calC(s,k)$ is a positive number depending at most on $s$ and $k$. Note that the positivity of $\calC(s,k)$ is a consequence of the lower bound (\ref{1.5}). Let $V(k)$ denote the least natural number for which the anticipated relation (\ref{1.8}) holds. Then this classical version of Vinogradov's mean value theorem leads to the upper bound $V(k)\le 3k^2(\log k+O(\log \log k))$ (see \cite[Theorem 7.4]{Vau1997}).\par

The author's thesis work \cite{Woo1992a}, \cite{Woo1992} on repeated efficient differencing methods led to sizeable improvements in the conclusions reported in the last paragraph. Roughly speaking, the upper bound (\ref{1.7}) was established with $\eta_{s,k}\approx k^2e^{-2s/k^2}$ for $s\le k^2\log k$, and with $\eta_{s,k}\approx (\log k)^4e^{-3s/(2k^2)}$ for $s>k^2\log k$ (see \cite[Theorem 1.2]{Woo1992} for a precise statement). In the range critical in applications, the rate of decay of $\eta_{s,k}$ with respect to $s$ stemming from this progress is twice that previously available. As a consequence of these developments, we established that $V(k)\le k^2(\log k+2\log \log k+O(1))$ (see \cite{Woo1996}). We are now able to improve matters significantly.\par

Define the singular series
\begin{equation}\label{1.9b}
\grS(s,k)=\sum_{q=1}^\infty \underset{(a_1,\ldots ,a_k,q)=1}{\sum_{a_1=1}^q\dots \sum_{a_k=1}^q}\Bigl|q^{-1}\sum_{r=1}^qe((a_1r+\ldots +a_kr^k)/q)\Bigr|^{2s}
\end{equation}
and the singular integral
\begin{equation}\label{1.9c}
\grJ(s,k)=\int_{\dbR^k}\Bigl| \int_0^1 e(\bet_1\gam +\ldots +\bet_k\gam^k)\d\gam 
\Bigr|^{2s}\d\bfbet .
\end{equation}
It transpires that the positive number $\calC(s,k)$ occurring in the putative asymptotic formula (\ref{1.8}) is then given by $\calC(s,k)=\grS(s,k)\grJ(s,k)$. In \S9 we establish the asymptotic formula (\ref{1.8}) for $s\ge k^2+k+1$.

\begin{theorem}\label{theorem1.2}
When $k\ge 3$, one has $V(k)\le k^2+k+1$.
\end{theorem}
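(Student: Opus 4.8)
The plan is to deduce Theorem~\ref{theorem1.2} from the strong upper bound on $J_{s,k}(X)$ supplied by Theorem~\ref{theorem1.1} via a standard Hardy--Littlewood dissection, together with the circle method machinery for the major arcs. The key point is that $J_{s,k}(X)$ counts solutions of the system \eqref{1.3}, and one seeks to show that for $s\ge k^2+k+1$ the main term predicted by the circle method, namely $\grS(s,k)\grJ(s,k)X^{2s-\frac12 k(k+1)}$, genuinely dominates. First I would introduce major arcs $\grM$ consisting of those $\bfalp$ lying within $X^{-j+\del}$ of a rational $\bfa/q$ with $1\le a_j\le q\le X^{\del}$ $(1\le j\le k)$, for a suitable small $\del>0$, and minor arcs $\grm=[0,1)^k\setminus\grM$.

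On the major arcs, the standard pruning argument — using the Weyl-type estimate $f_k(\bfalp;X)\ll q^\eps X^{1+\eps}(q^{-1}+X^{-1}+qX^{-k})^{1/k}$ for the generating function, combined with the product structure over the $k$ coordinates — shows that
\begin{equation*}
\int_{\grM}|f_k(\bfalp;X)|^{2s}\d\bfalp = \grS(s,k)\grJ(s,k)X^{2s-\frac12 k(k+1)} + O\bigl(X^{2s-\frac12 k(k+1)-\tau}\bigr)
\end{equation*}
for some $\tau>0$, provided $2s$ exceeds a quantity like $2k^2$ needed to make the singular series and singular integral converge absolutely and the pruning go through. Here the absolute convergence of $\grS(s,k)$ and $\grJ(s,k)$, and the positivity of $\grS(s,k)\grJ(s,k)$, follow from the lower bound \eqref{1.5} and classical estimates once $s$ is modestly large; the precise threshold $s\ge k^2+k+1$ will come from the worst of the convergence conditions and the condition needed in Theorem~\ref{theorem1.1}.

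The main obstacle — and the reason Theorem~\ref{theorem1.1} is exactly what is needed — is the minor arc contribution. On $\grm$ one extracts one factor of $f_k$ and estimates it using a Weyl-type bound arising from the Dirichlet rational approximation outside $\grM$, obtaining a saving of a positive power $X^{-\sig}$ of $X$, and then bounds the remaining $2s-1$ (or, more efficiently, $2s-2k$) factors by $J_{s-k,k}(X)$ or a similar mean value. The crucial input is that Theorem~\ref{theorem1.1}, applied with $s$ replaced by $s-k\ge k^2+1\ge k(k+1)$ — wait, one needs $s-k\ge k(k+1)$, i.e. $s\ge k^2+2k$, so instead one uses a Hölder-type interpolation keeping $t$ variables with $t\ge k(k+1)$ and extracting the Weyl saving from the rest — yields
\begin{equation*}
\int_{\grm}|f_k(\bfalp;X)|^{2s}\d\bfalp \ll X^{-\sig'} J_{t,k}(X)\cdot X^{2(s-t)} \ll X^{2s-\frac12 k(k+1)-\sig'+\eps}
\end{equation*}
for some $\sig'>0$, so that the minor arcs are negligible compared with the main term. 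Arranging that both the convergence/pruning conditions on the major arcs and the Weyl-saving argument on the minor arcs can be met simultaneously, with $t\ge k(k+1)$ variables held back for Theorem~\ref{theorem1.1}, is precisely what forces the bound $s\ge k^2+k+1$; checking that this single inequality suffices for all the competing requirements (and in particular that the loss in the minor-arc Weyl estimate combined with the $\eps$-loss in Theorem~\ref{theorem1.1} still leaves a positive power saving) is the delicate bookkeeping step. The restriction $k\ge 3$ enters because for $k=2$ the threshold would collide with the boundary case where the main term and the $X^s$ diagonal term are of comparable size.
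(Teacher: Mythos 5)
Your proposal follows the paper's proof quite closely: a Hardy--Littlewood dissection, a Weyl-type pointwise saving on the minor arcs coming from Theorem \ref{theorem1.6}, applied to $2s-2k(k+1)$ copies of $f_k$ while Theorem \ref{theorem1.1} bounds the remaining $2k(k+1)$ moments, and a standard major-arc approximation with singular series and singular integral. The paper takes $\grM(q,\bfa)$ with $q\le X^{1/k}$ and $|q\alp_j-a_j|\le X^{1/k-j}$, and the threshold $s\ge k^2+k+1$ is exactly what ensures both that at least one Weyl-saving factor survives on $\grm$ (that is $2s-2k(k+1)\ge 2$) and that Theorem \ref{theorem1.1} applies to $\oint|f|^{2s-2}\d\bfalp$ in the major-arc approximation error; your bookkeeping matches this. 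One genuine slip in your commentary: the restriction $k\ge 3$ has nothing to do with the $X^s$ diagonal term becoming comparable to the main term (for $k=2$, $s=7$ one has $2s-\frac12 k(k+1)=11>7=s$, so there is no such collision). Rather, in the major-arc pruning the error incurred on replacing $f$ by the approximant $V$ carries a factor $X^{1+2/k}$ multiplying moments of order $2s-2$, and for this error to be absorbed into $o(X^{2s-\frac12 k(k+1)})$ one needs $2/k<1$, i.e.\ $k\ge 3$. Apart from this misattribution, the argument you outline is the paper's.
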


The lower bound (\ref{1.5}) implies that the asymptotic formula (\ref{1.8}) cannot hold for $s<\frac{1}{2} k(k+1)$. The condition on $s$ imposed in Theorem \ref{theorem1.2} is therefore only a factor $2$ away from the best possible conclusion of its type.\par

The estimate recorded in Theorem \ref{theorem1.1} also leads to improvements in available bounds relating to Tarry's problem. When $h$, $k$ and $s$ are positive integers with $h\ge 2$, consider the Diophantine system
\begin{equation}\label{1.9}
\sum_{i=1}^sx_{i1}^j=\sum_{i=1}^sx_{i2}^j=\ldots =\sum_{i=1}^sx_{ih}^j\quad (1\le j\le k).
\end{equation}
Let $W(k,h)$ denote the least natural number $s$ having the property that the simultaneous equations (\ref{1.9}) possess an integral solution $\bfx$ with
$$\sum_{i=1}^sx_{iu}^{k+1}\ne \sum_{i=1}^sx_{iv}^{k+1}\quad (1\le u<v\le h).$$
The problem of estimating $W(k,h)$ was investigated extensively by E. M. Wright and L.-K. Hua (see \cite{Hua1938}, \cite{Hua1949a}, \cite{Wri1948}), and very recently upper bounds for $W(k,h)$ have played a role in work of Croot and Hart \cite{CH2010} on the sum-product conjecture. L.-K. Hua was able to show that $W(k,h)\le k^2(\log k+O(1))$ for $h\ge 2$, a conclusion improved by the present author when $h=2$ with the bound $W(k,2)\le \frac{1}{2}k^2(\log k+\log \log k+O(1))$ (see \cite[Theorem 1]{Woo1996}). We improve both estimates in \S9.

\begin{theorem}\label{theorem1.3}
When $h$ and $k$ are natural numbers with $h\ge 2$ and $k\ge 2$, one has $W(k,h)\le k^2+k-2$.
\end{theorem}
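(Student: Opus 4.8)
The plan is to deduce Theorem \ref{theorem1.3} from the mean value estimate of Theorem \ref{theorem1.1} by a standard counting argument. First I would observe that, by orthogonality, the number of solutions of the system (\ref{1.9}) with $1\le x_{iu}\le X$ for all $i,u$ is precisely
\[
N_{s,k,h}(X)=\int_{[0,1)^k}\abs{f_k(\bfalp;X)}^{2}\cdot \abs{f_k(\bfalp;X)}^{2(h-1)}\,\d\bfalp\le \int_{[0,1)^k}\abs{f_k(\bfalp;X)}^{2s}\,\d\bfalp =J_{s,k}(X),
\]
where one uses the trivial bound $\abs{f_k(\bfalp;X)}\le X$ only if needed, but in fact the cleaner route is to note $N_{s,k,h}(X)\le J_{sh/2,k}(X)$ when $h$ is even, or more robustly $N_{s,k,h}(X)\le X^{(h-2)s}J_{s,k}(X)$ after pulling out $h-2$ of the sums trivially and retaining a genuine $J_{s,k}$. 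Taking $s=k^2+k-2$, so that $2s=2k^2+2k-4\ge k(k+1)$ for $k\ge 2$, Theorem \ref{theorem1.1} gives $J_{s,k}(X)\ll X^{2s-\frac12 k(k+1)+\eps}$, hence $N_{s,k,h}(X)\ll X^{hs-\frac12 k(k+1)+\eps}$.

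The second step is to compare this with the expected main term contributed by \emph{diagonal} solutions, and by solutions in which (\ref{1.9}) holds but the associated $(k+1)$th power sums all coincide. The number of solutions of (\ref{1.9}) subject to the additional constraint $\sum_i x_{iu}^{k+1}=\sum_i x_{iv}^{k+1}$ for all $u,v$ is bounded above by the mean value $J_{sh/2,k+1}(X)$ or, arguing as before, by $X^{(h-2)s}J_{s,k+1}(X)$; since $s=k^2+k-2\ge (k+1)((k+1)+1)$ fails in general, I would instead estimate this quantity by inserting the trivial bound on the extra variables and applying Theorem \ref{theorem1.1} in degree $k+1$ only when $s$ is large enough, or — more simply — bound the number of such "bad" solutions by $X^{\eps}\cdot X^{hs-\frac12 k(k+1)-1}$, which follows from the same mean value input together with the observation that fixing the common $(k+1)$th power sum to one of its $O(X^{k+1})$ values and dividing saves a factor $X$. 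The key inequality to establish is therefore that the total count $N_{s,k,h}(X)$ strictly exceeds the count of bad solutions, i.e. that
\[
X^{hs-\frac12 k(k+1)}\gg X^{hs-\frac12 k(k+1)-1+\eps}
\]
with room to spare, which is immediate once the exponents are verified; the point is that the honest lower bound (\ref{1.5}) applied with the parameter $sh/2$ (or a diagonal count) gives $N_{s,k,h}(X)\gg X^{hs-\frac12 k(k+1)}$, a strictly larger power of $X$.

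Putting these together, for $X$ sufficiently large there must exist a solution $\bfx$ of (\ref{1.9}) with $1\le x_{iu}\le X$ for which not all the $(k+1)$th power sums $\sum_i x_{iu}^{k+1}$ are equal; in fact, by a dyadic pigeonholing over the $\binom{h}{2}$ pairs $(u,v)$, one obtains a solution with $\sum_i x_{iu}^{k+1}\ne\sum_i x_{iv}^{k+1}$ for \emph{every} pair $u<v$, which is exactly what is required by the definition of $W(k,h)$. This yields $W(k,h)\le k^2+k-2$.

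I expect the main obstacle to be arranging the bookkeeping in the second step so that the "bad" solutions are genuinely of lower order: one must be careful that pulling out $h-2$ trivial summations does not destroy the saving, and that the degree-$(k+1)$ constraint really does cost a full power of $X$ rather than being absorbed into the $\eps$. A clean way to sidestep this is to run the argument directly with the mean value $\int \abs{f_k(\bfalp;X)}^{2}\abs{g(\bfalp)}^{2(h-1)}$ where $g$ is chosen to weight solutions by whether the $(k+1)$th power sums coincide, and to invoke Theorem \ref{theorem1.1} for the even moment $2sh/2$ when $h$ is even (the general $h$ following by monotonicity in $h$, since a solution for larger $h$ restricts to one for smaller $h$). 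The numerology $2(k^2+k-2)\ge k(k+1)$ for $k\ge2$, together with the strict gap of one power of $X$ between main and bad terms, is the crux, and it is comfortably satisfied.
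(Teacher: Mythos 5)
Your proposal is pointed in roughly the right moral direction (compare the total count of solutions of (\ref{1.9}) with the count of "bad" ones where the $(k+1)$th power sums coincide), but it has several genuine gaps, the most serious of which is precisely the step you flag as a potential obstacle.

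First, some bookkeeping errors. The claimed identity $N_{s,k,h}(X)=\int|f_k|^{2}\,|f_k|^{2(h-1)}\d\bfalp$ is not an orthogonality relation for the system (\ref{1.9}): that integral is $J_{h,k}(X)$, which counts a completely different system. The correct representation is $N_{s,k,h}(X)=\sum_{\bfn}\rho(\bfn)^h$, where $\rho(\bfn)$ counts $\bfx$ with $\sum_i x_i^j=n_j$ ($1\le j\le k$); for $h>2$ this is not a moment $J_{\cdot,k}(X)$ and your asserted lower bound $N_{s,k,h}(X)\gg X^{hs-\frac12 k(k+1)}$ is off (the heuristic order is $X^{hs-(h-1)\frac12 k(k+1)}$). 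Similar objections apply to the "bad" count.

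Second, and more importantly, the crux is missing. The paper reduces everything to the single comparison $J_{s,k}(X)>tJ_{s,k+1}(X)$ by a Cauchy--Schwarz step: if every fibre $\bfn$ supports at most $t$ values of the $(k+1)$th power sum, then $\rho(\bfn)^2\le t\sum_{g}\sigma(\bfn,g)^2$, whence $J_{s,k}(X)\le tJ_{s,k+1}(X)$. This both gives the clean dichotomy and dispenses with any need to pass from "not all equal" to "pairwise distinct" by pigeonholing (your sketch of a dyadic pigeonhole over $\binom{h}{2}$ pairs is not an argument that works as stated). You then need $J_{s,k+1}(X)\ll X^{2s-\frac12 k(k+1)-\del}$ with $s=k^2+k-2$. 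As you correctly note, Theorem \ref{theorem1.1} in degree $k+1$ requires $s\ge(k+1)(k+2)=k^2+3k+2$, which fails. Your proposed fix — "fixing the common $(k+1)$th power sum to one of its $O(X^{k+1})$ values and dividing saves a factor $X$" — is not a valid argument, and the conclusion it would give (a saving of a full power of $X$) is far too strong: the actual saving at $s=k^2+k-2$ is only $X^{2/k}$. The step you are missing is the interpolation between Theorem \ref{theorem1.1} at $s=(k+1)(k+2)$ and the estimate $J_{k+2,k+1}(X)\ll X^{k+2}$ of Vaughan and Wooley, which by H\"older gives $J_{s,k+1}(X)\ll X^{2s-\frac12(k+1)(k+2)+\eta_s+\eps}$ with $\eta_s=\frac12(1-1/k)\bigl((k+1)(k+2)-s\bigr)$; requiring $\eta_s<k+1$ gives exactly $s\ge k^2+k-2$. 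Without that auxiliary input and interpolation, the stated threshold $k^2+k-2$ cannot be reached.
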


Next we discuss the asymptotic formula in Waring's problem. When $s$ and $k$ are natural numbers, we denote by $R_{s,k}(n)$ the number of representations of the natural number $n$ as the sum of $s$ $k$th powers of positive integers. A heuristic application of the circle method suggests that for $k\ge 3$ and $s\ge k+1$, one should have
\begin{equation}\label{1.10}
R_{s,k}(n)=\frac{\Gam (1+1/k)^s}{\Gam(s/k)}\grS_{s,k}(n)n^{s/k-1}+o(n^{s/k-1}),
\end{equation}
where
\begin{equation}\label{1.10a}
\grS_{s,k}(n)=\sum_{q=1}^\infty \sum^q_{\substack{a=1\\ (a,q)=1}}\Bigl( q^{-1}\sum_{r=1}^qe(ar^k/q)\Bigr)^se(-na/q).
\end{equation}
Under modest congruence conditions, one has $1\ll\grS_{s,k}(n)\ll n^\eps$, and thus the conjectural relation (\ref{1.10}) may be interpreted as an honest asymptotic formula (see \cite[\S\S4.3, 4.5 and 4.6]{Vau1997} for details). Let $\Gtil(k)$ denote the least integer $t$ with the property that, for all $s\ge t$, and all sufficiently large natural numbers $n$, one has the asymptotic formula (\ref{1.10}). As a consequence of Theorem \ref{theorem1.1}, we derive the new upper bound for $\Gtil(k)$ presented in the following theorem.

\begin{theorem}\label{theorem1.4}
When $k\ge 2$, one has $\Gtil(k)\le 2k^2+2k-3$.
\end{theorem}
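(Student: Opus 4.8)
The plan is to derive the bound $\Gtil(k)\le 2k^2+2k-3$ by the Hardy--Littlewood circle method, using Theorem \ref{theorem1.1} to control the contribution of the minor arcs. Write $\grF(\alp)=\sum_{1\le x\le P}e(\alp x^k)$ with $P=n^{1/k}$, so that $R_{s,k}(n)=\int_0^1\grF(\alp)^se(-n\alp)\d\alp$. Fix a parameter, say $Q=P^{\del}$ for a small positive $\del$, and take a Farey dissection into major arcs $\grM$ (the union of intervals around rationals $a/q$ with $q\le Q$ and $|q\alp-a|\le QP^{-k}$) and minor arcs $\grm=[0,1)\setminus\grM$. On the major arcs, the standard pruning analysis of \cite[\S4]{Vau1997} applies verbatim for $s\ge\max\{k+1,5\}$ and yields the expected main term $\frac{\Gam(1+1/k)^s}{\Gam(s/k)}\grS_{s,k}(n)n^{s/k-1}$ together with an error of size $o(n^{s/k-1})$; nothing new is needed here, so I would simply cite the relevant statements and concentrate on $\grm$.

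The heart of the matter is to show that $\int_\grm|\grF(\alp)|^s\d\alp = o(P^{s-k})$ once $s\ge 2k^2+2k-3$. First I would separate out $2t$ of the variables, where $t=k(k+1)$, and bound
$$\int_\grm|\grF(\alp)|^s\d\alp\le\Bigl(\sup_{\alp\in\grm}|\grF(\alp)|\Bigr)^{s-2t}\int_0^1|\grF(\alp)|^{2t}\d\alp.$$
The mean value $\int_0^1|\grF(\alp)|^{2t}\d\alp$ is precisely the number of solutions of a single diagonal equation of degree $k$ in $2t$ variables, which by orthogonality is bounded by $J_{t,k}(P)$ after completing to the full system (or, more directly, by a standard argument comparing the one-dimensional integral to the $k$-dimensional one); invoking Theorem \ref{theorem1.1} with $s$ there equal to $t=k(k+1)$ gives $\int_0^1|\grF(\alp)|^{2t}\d\alp\ll P^{2t-\frac12 k(k+1)+\eps}=P^{\frac32 k(k+1)+\eps}$. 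For the supremum over the minor arcs I would use Weyl's inequality, which provides $\sup_{\alp\in\grm}|\grF(\alp)|\ll P^{1-\sig+\eps}$ with $\sig=2^{1-k}$ (or the slightly stronger Heath-Brown/Robert--Sargos bound if one wishes, but Weyl suffices for the stated exponent when combined with the arithmetic below). Then the minor-arc integral is
$$\ll P^{(1-\sig)(s-2t)+\frac32 k(k+1)+2\eps},$$
and this is $o(P^{s-k})$ provided $(1-\sig)(s-2t)+\frac32 k(k+1)<s-k$, i.e. provided $\sig(s-2t)>\tfrac32 k(k+1)+k-2t=\tfrac12 k(k+1)+k=\tfrac12 k(k+3)$...

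Here I should be more careful about the arithmetic, because a naive use of Weyl's inequality — where $\sig=2^{1-k}$ is exponentially small — will not deliver the clean bound $s\ge 2k^2+2k-3$; it would force $s$ to be exponentially large. The correct device, and the step I expect to be the main obstacle, is to avoid Weyl entirely for the bulk of the variables and instead feed Theorem \ref{theorem1.1} a \emph{second} time via an auxiliary mean value over a thinned major-arc structure, or equivalently to run the standard ``$2k^2+2k$''-variable argument of Vaughan (cf.\ \cite[Theorem 4.4 and its refinements]{Vau1997}) with the exponent $\frac12 k(k+1)$ in place of the classical $\frac12 k(k+1)+\eta_{s,k}$. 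Concretely: split $s=2t'+u$ with $t'=\tfrac12 k(k+1)$ extra variables handled by Theorem \ref{theorem1.1} and the remaining $u$ variables' sum estimated pointwise on $\grm$ by the classical bound $|\grF(\alp)|\ll P^{1+\eps}(q^{-1}+P^{-1}+qP^{-k})^{1/k}$ valid on the complement of the major arcs, which on $\grm$ gives a saving of a \emph{power} of $P$ of size $P^{-\del/k}$ per variable. Balancing $P^{(s-2t')(1-1/(2k)+\ldots)}$ against the mean-value contribution $P^{2t'-\frac12 k(k+1)+\eps}=P^{\frac12 k(k+1)\cdot 1+\eps}$... one finds the admissible range is $s\ge 2k^2+2k-3$, where the ``$-3$'' reflects the endpoint optimisation in the counting of the $u$ pointwise variables together with the trivial bound $|\grF(\alp)|\le P$. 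I would carry out this balancing explicitly, check the endpoint congruence/divisor estimates needed to sum over $q\le Q$, and confirm that the total minor-arc bound is $\ll P^{s-k-\tau}$ for some $\tau>0$, completing the proof. The only genuinely delicate point is ensuring the interface between the two applications of Theorem \ref{theorem1.1} (or between Theorem \ref{theorem1.1} and the pointwise bound) loses nothing more than an $\eps$ in the exponent, so that the constant $2k^2+2k-3$ is attained and not merely $2k^2+2k-2$ or worse.
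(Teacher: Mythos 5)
Your proposal misses the paper's key idea and contains several genuine errors, so that it does not deliver the stated constant $2k^2+2k-3$.

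First, the inequality you invoke between the one-variable moment and Vinogradov's integral is reversed. The integral $\int_0^1|\grF(\alp)|^{2t}\d\alp$ counts solutions of the \emph{single} equation $x_1^k+\dots+x_t^k=y_1^k+\dots+y_t^k$, which are strictly more numerous than the solutions of the full Vinogradov system; one has $J_{t,k}(P)\le\int_0^1|\grF(\alp)|^{2t}\d\alp$, not the reverse. The correct passage is via Fourier analysis: writing the single equation as the $j=k$ row of the full system and summing over the $k-1$ missing right-hand sides of moduli $\ll X^j$ gives $\int_0^1|\grF(\alp)|^{2t}\d\alp\ll X^{\frac12 k(k-1)}J_{t,k}(X)$, whence $\ll X^{2t-k+\eps}$ (not $X^{2t-\frac12 k(k+1)+\eps}$) for $t\ge k(k+1)$. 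With this correction, your ``naive'' computation already closes with \emph{any} power saving on the minor arcs, in particular Weyl's $\sup_\grm|g(\alp)|\ll X^{1-2^{1-k}+\eps}$; the saving $X^{-2^{-k}}$ is exponentially small but it is still a power of $X$, and only one extra variable is needed. Your diagnosis that Weyl ``would force $s$ to be exponentially large'' is therefore wrong: the naive argument gives $\Gtil(k)\le 2k^2+2k+1$ cleanly. (You also have a sign error in the displayed threshold $\sig(s-2t)>\dots$.)

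Second, your attempted refinement via a ``pointwise saving of $P^{-\del/k}$ per variable'' on $\grm$ rests on the bound $|\grF(\alp)|\ll P^{1+\eps}(q^{-1}+P^{-1}+qP^{-k})^{1/k}$, but this is the major-arc approximation for the smooth model $V(\alp)$ (a Gauss-sum/complete-sum estimate), not a valid pointwise bound for the Weyl sum on the minor arcs; on $\grm$ there is no canonical small $q$ attached to $\alp$, and the honest pointwise saving for $g(\alp)=\sum e(\alp x^k)$ is Weyl's $2^{1-k}$ (or, at best, the $1/(2k(k-1))$ of Theorem~\ref{theorem1.5}), neither of which yields anything like $2k^2+2k-3$ by the route you sketch. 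Even granting a power-$1/k$ saving per variable, you would need $u$ variables with $\tau u\gg k^2$, giving $u$ of order $k^3$ rather than a constant.

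The idea you are missing, and which produces the ``$-3$,'' is Lemma~\ref{lemma10.1}: a shift in the underlying Diophantine system. One considers the sum $F(\bfbet)$ in which the degree-$(k-1)$ coefficient is zeroed out, and exploits translation invariance (apply $x\mapsto x-z$ and average over $1\le z\le X$, using the Binomial Theorem) to show
$$\oint|F(\bfbet)|^{2s}\d\bfbet\ll X^{k-2+\eps}J_{s,k}(2X)+X^{\eps-1}J_{s,k-1}(2X),$$
whence $I_s(X)\ll X^\eps\bigl(X^{2s-k-1+\eta_{s,k}}+X^{2s-k+\eta_{s,k-1}}\bigr)$. Since Theorem~\ref{theorem1.1} gives $\eta_{s,k-1}=0$ for $s\ge k(k-1)$ and, by H\"older interpolating down from $J_{k(k+1),k}$, $\eta_{s,k}\le1$ for $s\ge k^2+k-2$, one obtains $I_s(X)\ll X^{2s-k+\eps}$ already at $s=k^2+k-2$, i.e.\ at $2s=2k^2+2k-4$ moments rather than $2k(k+1)$. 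Combining with Weyl's inequality exactly as above then yields $\Gtil(k)\le 2(k^2+k-2)+1=2k^2+2k-3$. Without an ingredient of this kind (coupling $J_{s,k}$ with $J_{s,k-1}$ through a shift), your argument cannot beat $2k^2+2k+1$.
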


The first to obtain a bound for $\Gtil(k)$ were Hardy and Littlewood \cite{HL1922}, who established the bound $\Gtil(k)\le (k-2)2^{k-1}+5$. The sharpest bounds currently available for smaller values of $k$ are $\Gtil(k)\le 2^k$ $(k=3,4,5)$, due to Vaughan \cite{Vau1986a, Vau1986b}, and $\Gtil(k)\le \frac{7}{8}2^k$ $(k\ge 6)$, due to Boklan \cite{Bok1994}. For larger values of $k$, the story begins with Vinogradov \cite{Vin1935}, who showed that $\Gtil(k)\le 183k^9(\log k+1)^2$. By 1949, Hua \cite{Hua1949b} had shown that $\Gtil(k)\le (4+o(1))k^2\log k$. This upper bound was improved first by the author \cite{Woo1992} to $\Gtil(k)\le (2+o(1))k^2\log k$, and most recently by Ford \cite{For1995} to $\Gtil(k)\le 
(1+o(1))k^2\log k$. The latter two authors, Parsell \cite{Par2009}, and most recently Boklan and Wooley \cite{BK2010}, have also computed explicit upper bounds for $\Gtil(k)$ when $k\le 20$. In particular, one has the bounds $\Gtil(7)\le 112$, $\Gtil(8)\le 224$ due to Boklan \cite{Bok1994}, and $\Gtil(9)\le 365$, $\Gtil(10)\le 497$, $\Gtil(11)\le 627$, $\Gtil(12)\le 771$, $\Gtil(13)\le 934$, $\Gtil(14)\le 1112$, $\Gtil(15)\le 1307$, $\Gtil(16)\le 1517$, $\Gtil(17)\le 1747$, $\Gtil(18)\le 1992$, $\Gtil(19)\le 2255$, $\Gtil(20)\le 2534$ due to Boklan and Wooley \cite{BK2010}. The conclusion of Theorem \ref{theorem1.4} supersedes all of these previous results for $k\ge 7$, establishing that $\Gtil(7)\le 109$, $\Gtil(8)\le 141$, $\Gtil(9)\le 177$, ..., $\Gtil(20)\le 837$. Furthermore, the strength of Theorem \ref{theorem1.1} opens new possibilities for transforming estimates for $J_{s,k}(X)$ into bounds for auxiliary mean values suitable for investigating Waring's problem. This is a matter that we shall pursue further elsewhere (see \cite{Woo2011}).\par
 
We turn next to estimates of Weyl type for exponential sums. Here we present conclusions of two types, one applicable to exponential sums $f_k(\bfalp;X)$ defined by (\ref{1.1}) wherein a single coefficient $\alp_j$ is poorly approximable, and a second applicable when $\bfalp$ is poorly approximable as a $k$-tuple.

\begin{theorem}\label{theorem1.5} Let $k$ be an integer with $k\ge 2$, and let $\bfalp\in \dbR^k$. Suppose that there exists a natural number $j$ with $2\le j\le k$ such that, for some $a\in \dbZ$ and $q\in \dbN$ with $(a,q)=1$, one has $|\alp_j-a/q|\le q^{-2}$ and $q\le X^j$. Then one has
$$f_k(\bfalp;X)\ll X^{1+\eps}(q^{-1}+X^{-1}+qX^{-j})^{\sig (k)},$$
where $\sig(k)^{-1}=2k(k-1)$.
\end{theorem}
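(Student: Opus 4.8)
My proposal begins by stripping away the uninteresting ranges of the denominator $q$. If $q\le C$ or $q\ge X^{j}/C$ for a suitable constant $C=C(k,\eps)$, then $q^{-1}+X^{-1}+qX^{-j}\gg X^{-\eps}$, and the asserted estimate follows at once from the trivial bound $|f_k(\bfalp;X)|\le X$. So we may assume that $q$ is of intermediate size; in particular, since the claimed bound then comfortably exceeds $X^{1/2}$, we may also assume $|f_k(\bfalp;X)|>X^{1/2}$.

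The engine of the proof is a \emph{transference} of the near-optimal mean value estimate of Theorem~\ref{theorem1.1} into a pointwise bound for the exponential sum. I would raise $|f_k(\bfalp;X)|$ to an even power chosen so that, after one preparatory Weyl-type differencing step (which exposes $\alp_j$ in the coefficient of $x^{j-1}$ of an auxiliary polynomial of degree $k-1$) and an application of H\"older's inequality, orthogonality expresses this power as $X^{\eps}$ times a product of two factors. The first factor is a mean value counting solutions of the Vinogradov system $(\ref{1.3})$ — or of the slightly smaller system obtained after differencing — in a number of variables large enough for Theorem~\ref{theorem1.1} (or, in the smallest cases, for classical estimates) to apply; this factor is thereby bounded by $X^{2s-\frac12 k(k+1)+\eps}$ for the relevant $s$. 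The second factor carries all the dependence on $\alp_j$, and after a further differencing in the appropriate variable it reduces to a one-dimensional exponential sum of the shape
$$\Sigma=\sum_{1\le h\le H}\min\bigl\{X,\ \|c\alp_j h\|^{-1}\bigr\},$$
in which $c\in\dbN$ depends only on $k$ and $H$ is a power of $X$ not exceeding $X^{j}$.

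The sum $\Sigma$ I would estimate by the classical recipe: partition $1\le h\le H$ into $O(H/q+1)$ intervals of length $q$; on each such interval, writing $\alp_j=a/q+\bet$ with $|\bet|\le q^{-2}$, the drift arising from $\bet$ changes $c\alp_j h$ by at most $O(1)$ across the interval, so the contribution is governed by $\sum_{r=1}^{q}\min\{X,\|cra/q\|^{-1}\}\ll X+q\log q$ on using $(a,q)=1$. Hence $\Sigma\ll X^{\eps}(H/q+1)(X+q)\ll X^{j+1+\eps}(q^{-1}+X^{-1}+qX^{-j})$, a saving of $q^{-1}+X^{-1}+qX^{-j}$ over the trivial size $HX$. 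Substituting the bounds for both factors into the H\"older inequality and taking the appropriate root yields $f_k(\bfalp;X)\ll X^{1+\eps}(q^{-1}+X^{-1}+qX^{-j})^{\sig(k)}$ with $\sig(k)^{-1}=2k(k-1)$, uniformly in $j$.

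The step I expect to cause the most trouble is the transference itself, where the bookkeeping must be arranged so that the gain $X^{-\frac12 k(k+1)}$ from Theorem~\ref{theorem1.1} exactly cancels the powers of $X$ forfeited to differencing and to H\"older's inequality, leaving nothing behind but the factor $q^{-1}+X^{-1}+qX^{-j}$ and a benign $X^{\eps}$; here the crucial input is that Theorem~\ref{theorem1.1} loses only $X^{\eps}$, not a fixed positive power of $X$, so that the transference is not degraded. A second, subtler point is that for $j<k$ the coefficient $\alp_j$ occupies an interior slot of $(\alp_1,\dots,\alp_k)$, so that a single differencing places it not at the head but in the middle of the auxiliary polynomial; extra manoeuvring is then needed to bring it into the leading position required for the one-dimensional estimate, and it is the need to do this uniformly for all $2\le j\le k$ that dictates the particular value $2k(k-1)$ of $\sig(k)^{-1}$.
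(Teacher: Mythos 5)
Your strategy is essentially correct and is, at the level of ideas, the same as the paper's. The paper's actual proof is a one-paragraph citation: it invokes \cite[Theorem 5.2]{Vau1997}, which is precisely the classical Vinogradov--Vaughan transference principle you are trying to reconstruct, asserting for each $s$ that
$$f_k(\bfalp;X)\ll \bigl(J_{s,k-1}(2X)\,X^{\frac12 k(k-1)}\,(q^{-1}+X^{-1}+qX^{-j})\bigr)^{1/(2s)}\log(2X),$$
and then takes $s=k(k-1)$ and applies Theorem \ref{theorem1.1} with $k$ replaced by $k-1$, giving $J_{k(k-1),k-1}(2X)\ll X^{2k(k-1)-\frac12 k(k-1)+\eps}$, which after substitution yields the claimed bound. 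The transference you sketch — one Weyl-type differencing step producing a degree-$(k-1)$ auxiliary polynomial, H\"older's inequality to peel off a Vinogradov mean value, and a classical estimate for the residual sum $\sum_h\min(X,\|c\alp_j h\|^{-1})$ — is exactly the content of Vaughan's Theorem 5.2, and your outline captures the essential moves, including the point that $\alp_j$ sits in an interior slot for $j<k$ (which Vaughan's argument handles uniformly in $j$).

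One bookkeeping slip is worth correcting: after the differencing step the relevant mean value is over the degree-$(k-1)$ Vinogradov system, so the gain supplied by Theorem \ref{theorem1.1} is $X^{-\frac12 k(k-1)}$, not $X^{-\frac12 k(k+1)}$ as you wrote; this matches the compensating factor $X^{\frac12 k(k-1)}$ in Vaughan's bound, and with $s=k(k-1)$ it is exactly what produces $\sig(k)^{-1}=2s=2k(k-1)$. Beyond that the only genuine difference between your sketch and the paper's proof is economy: the paper cites the transference lemma rather than re-deriving it, so that the sole new input is Theorem \ref{theorem1.1}.
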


We remark that the factor $X^\eps$ in the conclusion of Theorem \ref{theorem1.5} may be replaced by $\log (2X)$ if one increases $\sig(k)^{-1}$ from $2k(k-1)$ to $2k^2-2k+1$.

\begin{theorem}\label{theorem1.6} Let $k$ be an integer with $k\ge 2$, and let $\tau$ and $\del$ be real numbers with $\tau^{-1}>4k(k-1)$ and $\del>k\tau$. Suppose that $X$ is sufficiently large in terms of $k$, $\del$ and $\tau$, and further that $|f_k(\bfalp;X)|\ge X^{1-\tau}$. Then there exist integers $q,a_1,\ldots ,a_k$ such that $1\le q\le X^\del$ and $|q\alp_j-a_j|\le X^{\del -j}$ $(1\le j\le k)$.
\end{theorem}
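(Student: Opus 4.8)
The plan is to run the standard Weyl-differencing-style contrapositive argument, but feeding in Theorem~\ref{theorem1.5} in place of the classical Weyl inequality. Suppose $|f_k(\bfalp;X)|\ge X^{1-\tau}$. First I would show that this forces every coefficient $\alp_j$ with $2\le j\le k$ to admit a rational approximation with relatively small denominator. Fix such a $j$. By Dirichlet's theorem there exist $a\in\dbZ$, $q\in\dbN$ with $(a,q)=1$, $q\le X^j$, and $|\alp_j-a/q|\le q^{-1}X^{-j}\le q^{-2}$. Then Theorem~\ref{theorem1.5} applies and yields
$$X^{1-\tau}\le |f_k(\bfalp;X)|\ll X^{1+\eps}(q^{-1}+X^{-1}+qX^{-j})^{\sig(k)},$$
with $\sig(k)^{-1}=2k(k-1)$. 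Choosing $\eps$ small in terms of $\tau$ and $k$ (which we may, since $\tau^{-1}>4k(k-1)$ leaves room to spare), this gives $q^{-1}+X^{-1}+qX^{-j}\gg X^{-\tau'\sig(k)^{-1}}$ for a suitable $\tau'$ slightly larger than $\tau$ but still with $\tau'\sig(k)^{-1}<1$ once $X$ is large. Hence each of the three terms individually is $\ll X^{-\tau'/\sig(k)^{-1}}$... more precisely, since the sum is $\gg X^{-\eta}$ with $\eta:=\tau'/\sig(k)^{-1}=2k(k-1)\tau'<\tfrac12$, and $X^{-1}$ is negligible against this for large $X$, we must have $q\ll X^{\eta}$ \emph{and} $qX^{-j}\ll X^{-j+\eta+\text{(slack)}}$; combining, $q\le X^{2k(k-1)\tau+\eps}$ and $|\alp_j-a/q|\le q^{-1}X^{-j}$.

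The second step is to merge these $k-1$ individual approximations (for $j=2,\ldots,k$) into a single common denominator, and to handle the linear coefficient $\alp_1$. Writing $\alp_j\approx a_j/q_j$ with $q_j\le X^{2k(k-1)\tau+\eps}$, set $q=\mathrm{lcm}[q_2,\ldots,q_k]$ or simply $q=q_2\cdots q_k$; then $q\le X^{(k-1)\cdot 2k(k-1)\tau+\eps}\le X^{\del}$ provided $\del>2k(k-1)^2\tau$, which is implied by the hypothesis $\del>k\tau$ only if... here I need to be a little more careful, so instead I would take the common denominator directly: from $|\alp_j - a_j/q_j|\le q_j^{-1}X^{-j}$ one gets $|q\alp_j - qa_j/q_j|\le (q/q_j)X^{-j}\le qX^{-j}\le X^{\del-j}$, so with $a_j:=qa_j/q_j\in\dbZ$ the required inequality $|q\alp_j-a_j|\le X^{\del-j}$ holds for $2\le j\le k$ as soon as $q\le X^{\del}$. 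For $\alp_1$: once $\alp_2,\ldots,\alp_k$ are pinned down near rationals with denominator $q$, a minor-arc estimate for the resulting one-dimensional exponential sum (or a direct Euler–Maclaurin / van der Corput argument on $f_k$ after subtracting the main term) shows $|f_k(\bfalp;X)|\ge X^{1-\tau}$ cannot hold unless $\alp_1$ too lies within $X^{\del-1}/q$ of $a_1/q$ for some integer $a_1$; one then absorbs any denominator adjustment into $q$, keeping $q\le X^\del$.

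The main obstacle is bookkeeping the exponents: one must verify that all the losses — the $X^\eps$ from Theorem~\ref{theorem1.5}, the $X^{-1}$ term, the inflation of the denominator when passing to a common $q$, and the treatment of $\alp_1$ — fit inside the budget provided by the strict inequalities $\tau^{-1}>4k(k-1)$ and $\del>k\tau$. The factor $4k(k-1)$ rather than $2k(k-1)$ is there precisely to give a factor-$2$ cushion: it guarantees $2k(k-1)\tau<\tfrac12$, so that after combining at most $k-1$ approximations (or, handled more efficiently, directly producing a single $q\ll X^{k\cdot 2k(k-1)\tau}$-type bound via a simultaneous Dirichlet argument) the denominator stays below $X^\del$, and the approximation errors $X^{\del-j}$ dominate the genuine errors $q^{-1}X^{-j}$. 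The step most prone to hidden difficulty is the passage from "each $\alp_j$ individually well-approximable" to "$\bfalp$ simultaneously well-approximable with one denominator of size $X^\del$": done naively the lcm blows up by a $(k-1)$-fold product, so the clean route is to invoke a simultaneous version of the argument, applying Dirichlet's theorem in $\dbR^{k-1}$ to get a single $q\le X^{j}$ at the outset and then running Theorem~\ref{theorem1.5} once, which is the organization I would adopt to keep the exponent arithmetic transparent. Since $X$ is assumed sufficiently large in terms of $k$, $\del$, $\tau$, all $\eps$-losses and lower-order terms are harmless, and the conclusion follows.
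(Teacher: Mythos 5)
Your plan correctly reads the theorem as a contrapositive of a Weyl-type estimate and correctly flags the real difficulty --- passing from individual rational approximations of $\alpha_2,\dots,\alpha_k$ to a \emph{simultaneous} approximation with a single denominator $q\le X^\delta$ --- but the fix you propose for that difficulty does not work, and the treatment of $\alpha_1$ is left unaddressed.

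The paper does not run Theorem \ref{theorem1.5} at all here. Instead it feeds the mean-value bound from Theorem \ref{theorem1.1} into Baker's Theorem 4.3 (\cite[Theorem 4.3]{Bak1986}), whose whole point is to output rational approximations $b_j/q_j$ to each $\alpha_j$ ($2\le j\le k$) \emph{together with} a bound on the least common multiple $q_0=[q_2,\dots,q_k]$, namely $q_0\le X^{\eps}(X/A)^{2k(k-1)}$. That lcm control is not a consequence of applying the one-variable Weyl estimate coordinate-by-coordinate; it comes from a genuinely multi-coefficient argument driven by $J_{s,k}$. Your alternative --- apply Dirichlet's theorem simultaneously in $\mathbb{R}^{k-1}$ to get one $q$ up front, then ``run Theorem \ref{theorem1.5} once'' --- runs into an immediate obstruction: Theorem \ref{theorem1.5} requires a reduced fraction $a/q$ with $(a,q)=1$ for a single coefficient $\alpha_j$, and the $q$ from simultaneous Dirichlet will generally have $(a_j,q)>1$; reducing to lowest terms destroys the common denominator you were trying to preserve, and applying Theorem \ref{theorem1.5} for a single $j$ tells you nothing about how well $q$ approximates the other $\alpha_\ell$. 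Your first route (individual Dirichlet plus Theorem \ref{theorem1.5} per coefficient, then pass to a common multiple) gives $q_j\le X^{2k(k-1)\tau+\eps}$ but, as you yourself note, offers no bound on $[q_2,\dots,q_k]$ short of the product, which overshoots $X^{k\tau}$. So the lcm step is a genuine gap, not bookkeeping. Separately, $\alpha_1$ is not covered by Theorem \ref{theorem1.5} (which requires $j\ge 2$), and your sketch of a ``minor-arc / van der Corput'' argument for it is not worked out; the paper handles $\alpha_1$ with a further denominator-adjustment lemma, \cite[Lemma 4.6]{Bak1986}, which produces a small multiplier $t\le 2k^2$ taming $\alpha_1$ without inflating the denominator beyond budget. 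In short: the missing idea is Baker's simultaneous approximation machinery (Theorem 4.3 plus Lemma 4.6), and the mean-value input is Theorem \ref{theorem1.1} rather than Theorem \ref{theorem1.5}.
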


The conclusion of Theorem \ref{theorem1.5} may be compared, for smaller exponents $k$, with Weyl's inequality (see \cite[Lemma 2.4]{Vau1997}). The latter provides an estimate of the same shape as that of Theorem \ref{theorem1.5} in the special case $j=k$, with the exponent $2^{k-1}$ in place of $2k(k-1)$. The conclusion of Theorem \ref{theorem1.5} is therefore superior to Weyl's inequality for $k\ge 8$. Subject to the condition $k\ge 6$, Heath-Brown \cite{HB1988} has shown that whenever there exist $a\in \dbZ$ and $q\in \dbN$ with $(a,q)=1$ and $|\alp -a/q|\le q^{-2}$, then one has
\begin{equation}\label{1.hb}
\sum_{1\le x\le X}e(\alp x^k)\ll X^{1-\frac{8}{3}2^{-k}+\eps} (X^3q^{-1}+1+qX^{3-k})^{\frac{4}{3}2^{-k}}.
\end{equation}
With the same conditions on $\alp$, Robert and Sargos \cite[Th\'eor\`eme 4 et Lemme 7]{RS2000} have shown that for $k\ge 8$, one has
\begin{equation}\label{1.14a}
\sum_{1\le x\le X}e(\alp x^k)\ll X^{1-3\cdot 2^{-k}+\eps}(X^4q^{-1}+1+qX^{4-k})^{\frac{8}{5}2^{-k}}.
\end{equation}
When $k\ge 9$, our conclusions in these special situations are superior to those of Heath-Brown, and those of Robert and Sargos, even for the restricted set of $\alp$ for which either (\ref{1.hb}) or (\ref{1.14a}) prove superior to Weyl's inequality. Finally, the methods of Vinogradov yield results of the type provided by Theorem \ref{theorem1.5} with the exponent $2k(k-1)$ replaced by $(C+o(1))k^2\log k$, for suitable values of $C$. For example, Linnik \cite{Lin1943} obtained the permissible value $C=22400$, Hua \cite{Hua1949b} obtained $C=4$, and the sharpest bound available hitherto, due to the author \cite{Woo1995} is tantamount to $C=3/2$. We note also that Wooley \cite{Woo1992}, Ford \cite{For1995}, Parsell \cite{Par2009}, and most recently Boklan and Wooley \cite{BK2010}, have computed explicit upper bounds for $\sig (k)$ when $k\le 20$. The conclusion of Theorem \ref{theorem1.5} is superior to these earlier numerical conclusions in all cases, and is transparently sharper for larger values of $k$ by a factor asymptotically of order $\log k$. Similar comments apply to the conclusion of Theorem \ref{theorem1.6}, a suitable reference to earlier work being \cite[Chapters 4 and 5]{Bak1986}.\par

Our final result concerns the distribution modulo $1$ of polynomial sequences. Here, 
we write $\|\tet\|$ for $\underset{y\in \dbZ}{\min}|\tet -y|$.

\begin{theorem}\label{theorem1.7}
Let $k$ be an integer with $k\ge 2$, and define $\tau(k)$ by $\tau(k)^{-1}=4k(k-1)$. Then whenever $\bfalp\in\dbR^k$ and $N$ is sufficiently large in terms of $k$ and $\eps$, one has
$$\min_{1\le n\le N}\| \alp_1n+\alp_2n^2+\ldots +\alp_kn^k\|<N^{\eps -\tau(k)}.$$
\end{theorem}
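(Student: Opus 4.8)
The plan is to derive Theorem~\ref{theorem1.7} from the Weyl-type estimate of Theorem~\ref{theorem1.5} by a standard Weyl-differencing-free exponential-sum argument, or more directly from the contrapositive form of Theorem~\ref{theorem1.6}. I would argue by contradiction: suppose that $\min_{1\le n\le N}\|\alp_1n+\ldots+\alp_kn^k\|\ge N^{\eps-\tau(k)}$. The aim is to manufacture an exponential sum over $n\le N$ that is forced to be large, and then invoke the Weyl estimate to force $\bfalp$ into a major arc, which on inspection contradicts the lower bound on the fractional parts unless the parameters are incompatible. Concretely, write $\del=N^{\eps-\tau(k)}$ and consider, for a suitable smooth or sharp weight, the sum $\sum_{n\le N}e(h(\alp_1n+\ldots+\alp_kn^k))$ for small positive integers $h$, or use the classical Fourier-analytic device (as in Montgomery or Baker, \cite[Chapters~4 and 5]{Bak1986}) converting a lower bound for $\min\|\cdot\|$ into the statement that $\sum_{h\le H}\bigl|\sum_{n\le N}e(h\,g(n))\bigr|$ is large for some $H\asymp \del^{-1}$, where $g(n)=\alp_1n+\ldots+\alp_kn^k$.

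First I would set up the transference: by a well-known lemma (see \cite[Chapter~2]{Bak1986}), if $\|g(n)\|\ge\del$ for all $1\le n\le N$ then there is an integer $h$ with $1\le h\ll \del^{-1}$ for which $\bigl|\sum_{n\le N}e(hg(n))\bigr|\gg N\del$. Fix such an $h$; since $\tau(k)<\eps$ is tiny we have $h\le N^{\eps}$ roughly, and the coefficients of the polynomial $hg$ are $h\alp_j$. Now apply Theorem~\ref{theorem1.6} to the polynomial $hg$ with the choice $\tau_0$ slightly larger than $\tau(k)$ (permissible since $\tau(k)^{-1}=4k(k-1)$ is exactly the threshold, so we take $\tau_0^{-1}$ just above $4k(k-1)$ and absorb the loss into the $N^\eps$), and with $\del_0$ chosen just above $k\tau_0$. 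The hypothesis $|f_k(h\bfalp;N)|\ge N^{1-\tau_0}$ holds because $N\del = N^{1+\eps-\tau(k)}\ge N^{1-\tau_0}$ for large $N$. Theorem~\ref{theorem1.6} then produces integers $q,a_1,\ldots,a_k$ with $1\le q\le N^{\del_0}$ and $\|q\,h\alp_j\|\le N^{\del_0-j}$ for $1\le j\le k$.

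Next I would exploit this approximation to contradict the assumed lower bound on fractional parts. With $q\le N^{\del_0}$ and $\del_0$ small, choose $n=q$ (or $n\le N^{\del_0+\eps}$ built from $q$); then $\|g(n)\|=\|{h}^{-1}\cdot hg(n)\|$ — better, work with $hg$ throughout and note $\|hg(q)\|\le\sum_{j=1}^k q^{j-1}\|qh\alp_j\|\cdot$(binomial-type bounds) which is $\ll N^{k\del_0-1}\cdot N^{(k-1)\del_0}$, far smaller than $\del=N^{\eps-\tau(k)}$ once $\del_0$ is chosen below $\tau(k)/(2k)$ or so. One must be slightly careful that $n=q\le N$, which holds since $\del_0<1$, and that $hg(q)$ having small fractional part gives $g(q)$ small fractional part after dividing by $h\le N^\eps$: indeed $\|g(q)\|\le\|hg(q)\|$ is false in general, so instead I would keep the weight $h$ inside and reformulate Theorem~\ref{theorem1.7}'s contradiction hypothesis as $\|hg(n)\|\ge\del$ for all $n$ — which follows from $\|g(n)\|\ge\del/h\ge\del^2$ and replacing $\del$ by $\del^2=N^{2\eps-2\tau(k)}$, harmlessly since $2\tau(k)$ is still below any fixed positive number and we simply rename $\eps$. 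This gives $\|hg(q)\|\ge N^{2\eps-2\tau(k)}$, contradicting the upper bound $\|hg(q)\|\ll N^{Ck\del_0-1}$.

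The main obstacle, and the point requiring the most care, is the bookkeeping of exponents: one must verify that the threshold $\tau(k)^{-1}=4k(k-1)$ is exactly what Theorem~\ref{theorem1.6} delivers after the transference through the auxiliary variable $h$ costs a factor (the passage $\|g\|\to\|hg\|$ with $h\ll\del^{-1}$ is what degrades $2k(k-1)$ in the pure Weyl estimate to $4k(k-1)$ here), and that all the slack parameters $\del_0,\tau_0$ can be chosen consistently with $\del_0>k\tau_0$, $\tau_0^{-1}>4k(k-1)$, and $\del_0$ small enough to win the final comparison — leaving only a net loss absorbable into $N^\eps$. Everything else is the routine Fourier-analytic dyadic decomposition and the elementary estimate $\|hg(q)\|\le\sum_j\binom{\cdot}{\cdot}q^{\cdot}\|qh\alp_j\|$, which I would not belabour.
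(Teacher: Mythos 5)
Your overall strategy is the paper's (which in turn is Baker's Theorem~4.5): assume the fractional parts are all $\ge\del$, transfer this to a large Weyl sum for some $h\bfalp$ with $h$ small, apply the Weyl-type Theorem~\ref{theorem1.6} to obtain a strong simultaneous rational approximation, and then produce an $n$ on which $\|\alp_1n+\dots+\alp_kn^k\|$ is forced to be small — a contradiction. That skeleton is right, and the transference step (pigeonholing a single $h\ll\del^{-1}$ from the usual Fej\'er/Vinogradov argument) is a harmless variant of the paper's use of the full sum $\sum_{m\le M}|f_k(m\bfalp;X)|$.

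However, the step where you actually produce the contradiction is wrong as written. You obtain $q\le N^{\del_0}$ with $\|qh\alp_j\|\le N^{\del_0-j}$ and then try to deduce that some $\|g(n)\|$ is small, first with $n=q$ (which doesn't work because you would have to divide out by $h$), and then by replacing the hypothesis $\|g(n)\|\ge\del$ for all $n$ with $\|hg(n)\|\ge\del$ for all $n$, claiming the latter ``follows from $\|g(n)\|\ge\del/h$''. That implication is false: $\|hx\|$ can vanish while $\|x\|$ is as large as $1/2$ (e.g.\ $x=1/2$, $h=2$). There is no lower bound for $\|hx\|$ in terms of $\|x\|$. The correct and essentially standard fix is simply to take $n=qh$. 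One has $qh\le N^{\del_0}\cdot\del^{-1}\ll N^{\del_0+\tau(k)}\le N$, and
$$
\bigl\|\alp_1(qh)+\dots+\alp_k(qh)^k\bigr\|\le\sum_{j=1}^k (qh)^{j-1}\,\|qh\,\alp_j\|\ll\sum_{j=1}^k N^{(j-1)(\del_0+\tau(k))}N^{\del_0-j}\ll N^{k\del_0+(k-1)\tau(k)-k},
$$
which is far smaller than $\del=N^{\eps-\tau(k)}$ once $\del_0+\tau(k)<1$; this together with $\del_0>k\tau_0$ and $\tau(k)-\eps\le\tau_0<\tau(k)$ can all be satisfied simultaneously. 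This is exactly what the paper does when it takes $y\le M(MXP^{-1})^kX^\eps$ (the factor $M$ absorbs the auxiliary multiplier). A further, smaller slip: you say ``since $\tau(k)<\eps$ is tiny we have $h\le N^\eps$''; in fact $\tau(k)=1/(4k(k-1))$ is a fixed positive constant, not smaller than $\eps$, and the correct bound is $h\ll N^{\tau(k)}$. That error does not derail the argument, but it does make the exponent bookkeeping in your final paragraph unreliable, and in particular your explanation of ``why $4k(k-1)$ rather than $2k(k-1)$'' is not the right accounting — the doubling comes from the requirement $\del_0>k\tau_0$ in Theorem~\ref{theorem1.6} combined with the size of $n=qh$, not from a degradation caused by $h$ per se.
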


For comparison, R. C. Baker \cite[Theorem 4.5]{Bak1986} provides a similar conclusion in which the exponent $4k(k-1)$ is replaced by $(8+o(1))k^2\log k$, a conclusion subsequently improved by the author to $(4+o(1))k^2\log k$ (see \cite[Corollary 1.3]{Woo1992}). For smaller values of $k$, meanwhile, a conclusion similar to that of Theorem \ref{theorem1.7} is delivered by \cite[Theorem 5.2]{Bak1986}, but with the exponent $2^{k-1}$ in place of $4k(k-1)$. The conclusion of Theorem \ref{theorem1.7} is superior to these earlier results for $k\ge 10$.\par

Given the scale of the improvement in estimates made available via Theorem \ref{theorem1.1}, it is natural to enquire whether it is now possible to derive visible improvements in the zero-free region for the Riemann zeta function. The estimate supplied by Theorem \ref{theorem1.1} has the shape $J_{s,k}(X)\le D(k,\eps)X^{2s-\frac{1}{2}k(k+1)+\eps}$ for $s\ge k(k+1)$, and the nature of the quantity $D(k,\eps)$ plays a critical role in determining the rate of growth of $|\zet(\sig +it)|$ with respect to $t$ when $\sig$ is close to $1$. It seems clear that, while some numerical improvement will be made available via the methods underlying Theorem \ref{theorem1.1}, such improvements will not lead to asymptotically significant improvements in the zero-free region. We refer the reader to the work of Ford \cite{For2002} for a discussion of recent numerical improvements to which our new results may be expected to contribute.\par

The arguments that underly our proof of Theorem \ref{theorem1.1}, which in a nod to the earlier use of {\it efficient differencing} we refer to loosely as {\it efficient congruencing} methods, change little when the setting for Vinogradov's mean value theorem is shifted from $\dbZ$ to the ring of integers of a number field. A significant feature of our estimates in this respect is that when $s\ge k(k+1)$, one is at most a factor $X^\eps$ away from the truth. In common with Birch's application \cite{Bir1961} of Hua's lemma in number fields, this aspect of our estimates makes them robust to variation in the degree of the field extension, since the strength of corresponding Weyl-type estimates for exponential sums no longer plays a significant role in applications. Thus, in any number field, one is able to establish the validity of the Hasse Principle, and of Weak Approximation, for diagonal equations of degree $d$ in $2d^2+2d+1$ or more variables, and moreover one is able to obtain the expected density of rational solutions of such equations. Hitherto, such a conclusion was available via the methods of Birch \cite{Bir1961} only for diagonal forms of degree $d$ in $2^d+1$ or more variables. In a similar manner, the robustness of the efficient congruencing method permits conclusions to be drawn over function fields, such as $\dbF_q(t)$, matching in strength what is to be found within this paper. We intend to record the consequences of our methods for such problems in forthcoming work.\par

Finally, the efficient congruencing operation may be applied with success in a number of multidimensional problems related to Vinogradov's mean value theorem. Thus, the work of Arkhipov, Chubarikov and Karatsuba, and Parsell, on exponential sums in many variables (see \cite{ACK} and \cite{Par2005}, for example) may be improved in a manner no less dramatic than can be seen in the context of the version of Vinogradov's mean value theorem described within this paper. This again is a topic to which we intend to return elsewhere.\par

The methods underlying our proof of Theorem \ref{theorem1.1} are complicated by the need to control non-singularity constraints modulo various powers of a prime, and this control must be exercised within an iterative process a step ahead of its application. This and other complicating factors obscure the key ideas of our argument, and so we have taken the liberty of providing, in \S2 below, a sketch of the fundamental efficient congruencing process. The reader will also find there an outline of the classical approach of Vinogradov, together with the repeated efficient differencing process. Next, in \S3, we prepare the notation and basic notions required in our subsequent deliberations. The concern of \S4 is an estimate for a system of basic congruences, and \S5 describes the conditioning process required to guarantee appropriate non-singularity conditions in subsequent steps of the efficient congruencing argument. In \S6 we discuss the efficient congruencing process itself. We combine these tools in \S7 with an account of the iterative process, ultimately delivering Theorem \ref{theorem1.1}. In \S8 we turn to our first applications, with the proof of Theorems \ref{theorem1.5}, \ref{theorem1.6} and \ref{theorem1.7}. Next, in \S9, we consider Tarry's problem, and establish Theorems \ref{theorem1.2} and \ref{theorem1.3}. Finally, in \S10, we consider the asymptotic formula in Waring's problem, and establish Theorem \ref{theorem1.4}. We finish in \S11 by describing a heuristic argument which implies the best possible bound of the shape (\ref{1.4}).\par

\section{A sketch of the efficient congruencing process} 
Our goal in this section is to offer an indication of the strategy underlying the efficient congruencing process key to our new bounds. At the same time, it is expedient to introduce some notation of use throughout this paper. In what follows, the letter $k$ denotes a fixed integer exceeding $1$, the letter $s$ will be a positive integer, and $\eps$ denotes a sufficiently small positive number. Since $k$ is considered fixed, we usually find it convenient to drop explicit mention of $k$ from the exponential sum $f_k(\bfalp;X)$ and its mean value $J_{s,k}(X)$, defined in (\ref{1.1}) and ({\ref{1.2}), respectively. We take $X$ to be a large real number depending at most on $k$, $s$ and $\eps$, unless otherwise indicated. In an effort to simplify our analysis, we adopt the following convention concerning the number $\eps$. Whenever $\eps$ appears in a statement, either implicitly or explicitly, we assert that the statement holds for each $\eps>0$. Note that the ``value'' of $\eps$ may consequently change from statement to statement. Finally, we make use of vector notation in a slightly unconventional manner. Thus, we may write $a\le \bfz\le b$ to denote that $a\le z_i\le b$ for $1\le i\le t$, we may write $\bfz\equiv \bfw\pmod{p}$ to denote that $z_i\equiv w_i\pmod{p}$ $(1\le i\le t)$, or on occasion $\bfz\equiv \xi\pmod{p}$ to denote that $z_i\equiv \xi\pmod{p}$ $(1\le i\le t)$. Confusion should not arise if the reader interprets similar statements in like manner.\par

We refer to the exponent $\lam_s$ as {\it permissible} when, for each positive number $\eps$, and for any real number $X$ sufficiently large in terms of $s$, $k$ and $\eps$, one has $J_s(X)\ll X^{\lam_s+\eps}$. Define $\lam_s^*$ to be the infimum of the set of exponents $\lam_s$ permissible for $s$ and $k$, and then put $\eta_s=\lam_s^*-2s+\frac{1}{2}k(k+1)$. Thus, whenever $X$ is sufficiently large in terms of $s$, $k$ and $\eps$, one has
\begin{equation}\label{2.01}
J_s(X)\ll X^{\lam_s^*+\eps},
\end{equation}
where
\begin{equation}\label{2.02}
\lam_s^*=2s-{\textstyle{\frac{1}{2}}}k(k+1)+\eta_s.
\end{equation}
Note that, in view of the lower bound (\ref{1.5}) and the trivial estimate $J_s(X)\le X^{2s}$, one has $0\le \eta_s\le {\textstyle{\frac{1}{2}}}k(k+1)$ for $s\in \dbN$. Vinogradov's method employs the translation-dilation invariance of the system (\ref{1.3}) to bound $\eta_{s+k}$ in terms of $\eta_s$ by efficiently engineering a strong congruence condition on the variables.\par

After Linnik \cite{Lin1943}, the classical approach to Vinogradov's mean value theorem imposes an initial congruence condition on the variables of the system (\ref{1.3}) by dividing into congruence classes modulo $p$, for a suitably chosen prime $p$. Let $\tet$ be a positive number with $0<\tet \le 1/k$, and consider a prime number $p$ with $X^\tet <p\le 2X^\tet$. The existence of such a prime is guaranteed by the Prime Number Theorem, or indeed by weaker results such as Bertrand's Postulate. Next, when $c$ and $\xi$ are non-negative integers, and $\bfalp \in [0,1)^k$, define
\begin{equation}\label{2.2}
\grf_c(\bfalp ;\xi)=\sum_{\substack{1\le x\le X\\ x\equiv \xi\mmod{p^c}}}e(\psi(x;\bfalp)),
\end{equation}
where
\begin{equation}\label{2.3}
\psi(x;\bfalp)=\alp_1x+\alp_2x^2+\ldots +\alp_kx^k.
\end{equation}
An application of H\"older's inequality now leads from (\ref{1.1}) to the bound
\begin{equation}\label{2.4}
|f(\bfalp;X)|^{2s}=\Bigl| \sum_{\xi=1}^{p^c}\sum_{\substack{1\le x\le X\\ x\equiv \xi\mmod{p^c}}}e(\psi(x;\bfalp))\Bigr|^{2s}\le (p^c)^{2s-1}\sum_{\xi=1}^{p^c}|\grf_c(\bfalp ;\xi)|^{2s}.
\end{equation}

\par Let us focus now on the mean value $J_{s+k}(X)$ defined via (\ref{1.2}). In order to save clutter, when $G:[0,1)^k\rightarrow \dbC$ is measurable, we write
$$\oint G(\bfalp)\d \bfalp =\int_{[0,1)^k}G(\bfalp)\d \bfalp .$$
On substituting (\ref{2.4}) into the analogue of (\ref{1.2}) with $s$ replaced by $s+k$, we find that
\begin{equation}\label{2.5}
J_{s+k}(X)\ll X^{2s\tet}\max_{1\le \xi\le p}\oint |f(\bfalp ;X)^{2k}\grf_1(\bfalp ;\xi)^{2s}|\d \bfalp .
\end{equation}
The mean value on the right hand side of (\ref{2.5}) counts the number of integral solutions of the system
\begin{equation}\label{2.6}
\sum_{i=1}^k(x_i^j-y_i^j)=\sum_{l=1}^s((pu_l+\xi)^j-(pv_l+\xi)^j)\quad (1\le j\le k),
\end{equation}
with $1\le \bfx,\bfy\le X$ and $(1-\xi)/p\le \bfu,\bfv\le (X-\xi)/p$. But, as a consequence of the Binomial Theorem, the validity of the equations (\ref{2.6}) implies that
\begin{equation}\label{2.7}
\sum_{i=1}^k((x_i-\xi)^j-(y_i-\xi)^j)=p^j\sum_{l=1}^s(u_l^j-v_l^j)\quad (1\le j\le k),
\end{equation}
whence
\begin{equation}\label{2.8}
\sum_{i=1}^k(x_i-\xi)^j\equiv \sum_{i=1}^k(y_i-\xi)^j\pmod{p^j}\quad (1\le j\le k).
\end{equation}

\par The congruences (\ref{2.8}) provide the {\it efficient} congruence condition mentioned earlier, with the artificial condition modulo $p$ imposed via (\ref{2.4}) converted into a system of congruence conditions modulo $p^j$ for $1\le j\le k$, as opposed merely to a system of congruence conditions modulo $p$. Suppose that $\bfx$ is {\it well-conditioned}, by which we mean that $x_1,\dots ,x_k$ lie in distinct congruence classes modulo $p$. Then, given an integral $k$-tuple $\bfn$, the solutions of the system
$$\sum_{i=1}^k(x_i-\xi)^j\equiv n_j\pmod{p}\quad (1\le j\le k),$$
with $1\le \bfx\le p$, may be lifted uniquely to solutions of the system
$$\sum_{i=1}^k(x_i-\xi)^j\equiv n_j\pmod{p^k}\quad (1\le j\le k),$$
with $1\le \bfx\le p^k$. In this way, the congruences (\ref{2.8}) essentially imply that
\begin{equation}\label{2.9}
\bfx\equiv \bfy\pmod{p^k},
\end{equation}
provided that we inflate our estimates by the combinatorial factor $k!$ to account for the multiplicity of solutions modulo $p$, together with a factor $p^{\frac{1}{2}k(k-1)}$ to account for solutions introduced as one considers for a fixed $\bfn'$ the possible choices for $\bfn\pmod{p^k}$ with $n_j\equiv n_j^\prime\pmod{p^j}$ $(1\le j\le k)$.\par

In the classical argument, one chooses $\tet=1/k$, so that $p^k>X$. Since $1\le \bfx, \bfy\le X$, one is then forced to conclude from the congruential condition (\ref{2.9}) that $\bfx=\bfy$, and in (\ref{2.7}) this in turn implies that
$$\sum_{l=1}^s(u_l^j-v_l^j)=0\quad (1\le j\le k).$$
The number of solutions of this system with $(1-\xi)/p\le \bfu,\bfv\le (X-\xi)/p$ is readily seen to be $O(J_s(X/p))$, and thus the corresponding number of solutions of (\ref{2.7}) with $\bfx=\bfy$ is $O(X^kJ_s(X/p))$. Thus, in view of (\ref{2.01}), one obtains
$$J_{s+k}(X)\ll (X^\tet)^{2s+\frac{1}{2}k(k-1)}X^kJ_s(X/p)\ll (X^\tet)^{2s+\frac{1}{2}k(k-1)}X^k(X^{1-\tet})^{\lam_s^*+\eps}.$$
Since $\tet=1/k$, it follows from (\ref{2.02}) that $\lam_{s+k}^*\le 2(s+k)-{\textstyle{\frac{1}{2}}}k(k+1)+\eta_{s+k}$, with $\eta_{s+k}\le \eta_s(1-1/k)$. On recalling the estimate $J_k(X)\le k!X^k$, stemming from Newton's formulae, the classical bound (\ref{1.7}) with $\eta_s=\frac{1}{2}k^2(1-1/k)^{[s/k]}$ follows by induction.\par

Suppose now that we take $\tet<1/k$, and interpret the condition (\ref{2.9}) by defining the $k$-tuple $\bfh$ by means of the relation $\bfh=(\bfx-\bfy)p^{-k}$, so that $\bfx=\bfy+\bfh p^k$. On substituting into (\ref{2.7}), we obtain the new system
\begin{equation}\label{2.11}
\sum_{i=1}^k\Psi_j(y_i,h_i,p)=\sum_{l=1}^s(u_l^j-v_l^j)\quad (1\le j\le k),
\end{equation}
where
$$\Psi_j(y,h,p)=p^{-j}((y+hp^k-\xi)^j-(y-\xi)^j)\quad (1\le j\le k).$$
The number of solutions of the system (\ref{2.11}) subject to the associated conditions $1\le \bfy\le X$, $|\bfh|\le Xp^{-k}$ and $(1-\xi)/p\le \bfu,\bfv\le (X-\xi)/p$, may be reinterpreted by means of an associated mean value of exponential sums. An application of Schwarz's inequality bounds this mean value in terms of $J_s(X/p)$ and a new mean value that counts integral solutions of the system
\begin{equation}\label{2.12}
\sum_{i=1}^k(\Psi_j(x_i,h,p)-\Psi_j(y_i,h,p))=\sum_{l=1}^s(u_l^j-v_l^j)\quad (1\le j\le k),
\end{equation}
with variables satisfying similar conditions to those above. We now have the option of repeating the process of imposing an efficient congruence condition on $\bfx$ and $\bfy$, much as before, by pushing the variables $\bfu$ and $\bfv$ into congruence classes modulo a suitable new prime number $\varpi$. In this way, one may estimate $J_{s+k}(X)$ by iteratively bounding the number of solutions of a system of type (\ref{2.12}) by a similar one, wherein the polynomial $\Psi_j(z)=\Psi_j(z,h,p)$ is replaced for $1\le j\le k$ by one of the shape $\Phi_j(z,g,\varpi)=\varpi^{-j}(\Psi_j(z+g\varpi^k)-\Psi_j(z))$. This {\it repeated efficient differencing process}, so-called owing to its resemblance to Weyl differencing, delivers the more efficient choice of parameter $\tet\approx k/(k^2+\eta_s)$. In the most important range for $s$, one obtains an estimate roughly of the shape $\eta_{s+k}\le \eta_s(1-2k/(k^2+\eta_s))$, and this yields $\eta_s\approx k^2e^{-2s/k^2}$ (see \cite{Woo1992} for details).\par

The strategy underlying Vinogradov's method, as seen in both its classical and repeated efficient differencing formulations, is that of transforming an initial congruence condition into a differencing step, with the ultimate aim in (\ref{2.6}), and its variants such as (\ref{2.12}), of forcing $2k$ variables to obey a diagonal condition. In this paper we instead view Vinogradov's method as an efficient generator of congruence conditions. Thus, the initial condition modulo $p$ amongst $2s$ variables underlying the mean value $J_{s+k}(X)$ efficiently generates the stronger condition modulo $p^k$ visible in (\ref{2.9}). Our strategy now is to exploit this condition so as to push $2s$ variables into the same congruence class modulo $p^k$ within a new mean value, and efficiently extract from this a fresh congruence condition modulo $p^{k^2}$. By repeating this process, one extracts successively stronger congruence conditions, and these may be expected to yield successively stronger mean value estimates.\par

There is a critical detail concerning which we have, thus far, remained silent. We supposed in advance of (\ref{2.9}) that the $k$-tuple $\bfx$ was well-conditioned, and indeed similar assumptions must be made at each point of the repeated efficient differencing process. There are several possible approaches to the challenge of ensuring this well-conditioning of variables, the most straightforward being to preselect the prime so that the bulk of solutions are well-conditioned (see \cite{Woo1993} for a transparent application of this idea). The problem of ensuring well-conditioning causes considerable difficulty in the analysis of the efficient congruencing argument in this paper, for our prime is fixed once and for all at the outset of our argument. For now we ignore this complication so as to better expose the underlying ideas.\par

We now outline the repeated efficient congruencing argument. In the first instance, we take $0<\tet\le 1/k^2$. Observe that, in view of the condition (\ref{2.9}), one may derive from (\ref{2.6}) the upper bound
$$J_{s+k}(X)\ll (X^\tet)^{2s+\frac{1}{2}k(k-1)}\max_{1\le \xi\le p}\oint \Bigl( \sum_{\eta=1}^{p^k}|\grf_k(\bfalp;\eta)|^2\Bigr)^k|\grf_1(\alp;\xi)|^{2s}\d \bfalp .$$
By H\"older's inequality, therefore, one sees that
\begin{equation}\label{2.13b}
J_{s+k}(X)\ll (X^\tet)^{2s+\frac{1}{2}k(k-1)}(X^{k\tet})^k\max_{1\le \xi\le p}\max_{1\le \eta\le p^k}I(\xi,\eta),
\end{equation}
where
$$I(\xi,\eta)=\oint |\grf_k(\bfalp;\eta)^{2k}\grf_1(\bfalp;\xi)^{2s}|\d \bfalp .$$
A further application of H\"older's inequality shows that
\begin{equation}\label{2.13a}
I(\xi,\eta)\le \Bigl( \oint |\grf_1(\bfalp ;\xi)|^{2s+2k}\d \bfalp \Bigr)^{1-k/s} \Bigl( \oint |\grf_1(\bfalp;\xi)^{2k}\grf_k(\bfalp ;\eta)^{2s}|\d \bfalp \Bigr)^{k/s}.
\end{equation}

The first integral on the right hand side of (\ref{2.13a}) counts the number of integral solutions of the system
$$\sum_{i=1}^{s+k}((pu_i+\xi)^j-(pv_i+\xi)^j)=0\quad (1\le j\le k),$$
with $(1-\xi)/p\le \bfu,\bfv\le (X-\xi)/p$. An application of the Binomial Theorem shows this to be $O(J_{s+k}(X/p))$. By orthogonality, meanwhile, the second integral is bounded above by the number of solutions of the system
\begin{equation}\label{2.14}
\sum_{i=1}^k(x_i^j-y_i^j)=\sum_{l=1}^s((p^ku_l+\eta)^j-(p^kv_l+\eta)^j)\quad (1\le j\le k),
\end{equation}
with $1\le \bfx,\bfy\le X$, $\bfx\equiv \bfy\equiv \xi\pmod{p}$ and $(1-\eta)/p^k\le \bfu,\bfv\le (X-\eta)/p^k$. As in the classical treatment sketched above, it follows as a consequence of the Binomial Theorem that the validity of the equations (\ref{2.14}) implies that
$$\sum_{i=1}^k((x_i-\eta)^j-(y_i-\eta)^j)=p^{jk}\sum_{l=1}^s(u_l^j-v_l^j)\quad (1\le j\le k),$$
whence
\begin{equation}\label{2.15}
\sum_{i=1}^k(x_i-\eta)^j\equiv \sum_{i=1}^k(y_i-\eta)^j\pmod{p^{jk}}\quad (1\le j\le k).
\end{equation}

\par The system (\ref{2.15}) provides an even more efficient congruence condition than that offered by (\ref{2.8}), tempered with a slightly diminished return stemming from the fact that the $x_i$ and $y_i$ all lie in the common congruence class $\xi$ modulo $p$. On the face of it, the latter unequivocally prevents these variables being well-conditioned. However, let us assume for now that $x_1,\ldots ,x_k$ are distinct modulo $p^2$, and likewise $y_1,\ldots ,y_k$. It transpires that on this occasion, one may lift solutions modulo $p^2$ to solutions modulo $p^{k^2}$. Indeed, the congruences (\ref{2.15}) essentially imply that
\begin{equation}\label{2.16}
\bfx\equiv \bfy\pmod{p^{k^2}},
\end{equation}
provided that one inserts a compensating factor $k!(p^{k+1})^{\frac{1}{2}k(k-1)}$ into the concomitant estimates. At this point one could repeat the whole process, empoying (\ref{2.16}) to engineer a fresh congruence condition modulo $p^{k^3}$, then modulo $p^{k^4}$, and so on. However, in order to illuminate this efficient congruencing argument, we examine instead the consequences of the assumption that $\tet=1/k^2$. In such circumstances, one has $p^{k^2}>X$, and so it follows from (\ref{2.16}) that $\bfx=\bfy$. Since $\bfx\equiv \bfy\equiv \xi\pmod{p}$, the number of possible choices for $\bfx$ and $\bfy$ is $O((X/p)^k)$. Substituting into (\ref{2.14}), we deduce that
\begin{align}
\oint|\grf_1(\bfalp;\xi)^{2k}\grf_k(\bfalp;\eta)^{2s}|\d \bfalp &\ll (X^\tet)^{\frac{1}{2}k(k^2-1)}(X^{1-\tet})^k\oint |\grf_k(\bfalp;\eta)|^{2s}\d \bfalp \notag \\
&\ll (X^\tet)^{\frac{1}{2}k(k^2-1)}(X^{1-\tet})^kJ_s(X/p^k).\label{2.17}
\end{align}

\par If we now substitute (\ref{2.17}) into (\ref{2.13a}), we obtain
$$I(\xi,\eta)\ll (J_{s+k}(X/p))^{1-k/s}\left( (X^\tet)^{\frac{1}{2}k(k^2-1)}(X^{1-\tet})^kJ_s(X/p^k)\right)^{k/s},$$
whence, in view of (\ref{2.01}), it follows from (\ref{2.13b}) that
\begin{align*}
J_{s+k}(X)\ll & \,\left( (X^\tet)^{2s+2k-\frac{1}{2}k(k+1)}(X^{1-\tet})^{\lam_{s+k}^*+\eps}\right)^{1-k/s}\\
& \, \times \left( (X^\tet)^{2ks-\frac{1}{2}k(k+1)+2k+\frac{1}{2}k(k^2-1)}(X^{1-\tet})^k(X^{1-k\tet})^{\lam_s^*+\eps}\right)^{k/s}.
\end{align*}
Consequently, from (\ref{2.02}) we discern the upper bound
$$J_{s+k}(X)\ll (X^{\eta_{s+k}(1-\tet)})^{1-k/s}\left(X^{-k+k^3\tet}(X^{\eta_s(1-k\tet)})\right)^{k/s}X^{2s+2k-\frac{1}{2}k(k+1)+\eps}.$$
Recall that $\tet=1/k^2$. Since $\lam_{s+k}^*=2s+2k-\frac{1}{2}k(k+1)+\eta_{s+k}$ is an infimal exponent, it follows that for a sequence of values of $X$ tending to $\infty$, one has
$$X^{\eta_{s+k}-\eps}\ll X^\eps (X^{1-1/k^2})^{(1-k/s)\eta_{s+k}}(X^{1-1/k})^{(k/s)\eta_s},$$
whence for each positive number $\eps$, one has
$$\eta_{s+k}\le (1-k/s)(1-1/k^2)\eta_{s+k}+(k/s)(1-1/k)\eta_s+\eps .$$
Noting again the infimal definition of $\lam^*_{s+k}$, we therefore deduce that
\begin{equation}\label{2.18}
\eta_{s+k}\le \frac{(1-1/k)\eta_s}{1+(s/k-1)(1/k^2)}.
\end{equation}

Provided that $s$ is no larger than about $k^{5/2}$, a modest computation leads from the iterative relation (\ref{2.18}) to the upper bound $$\eta_{s+k}\le (1-s/k^3)\eta_s\le e^{-s/k^3}\eta_s.$$
One therefore sees that $\eta_s$ is no larger than about $k^2e^{-\frac{1}{2}(s/k^2)^2}$. By comparison with the classical bound $\eta_s\le k^2e^{-s/k^2}$ mentioned following (\ref{1.7}), one has considerable additional decay in the upper bound for $\eta_s$ as soon as $s$ is a little larger than $k^2$. Indeed, even an estimate of this quality would establish, for example, that $\Gtil (k)\ll k^2(\log k)^{1/2}$, greatly improving the bound $\Gtil(k)\le (1+o(1))k^2\log k$ due to Ford \cite{For1995}.\par

For each natural number $N$, the pursuit of an $N$-fold repeated efficient congruencing process delivers bounds with the approximate shape
$$\eta_{s+k}\le \frac{\eta_s}{1+(s/k)^N(1/k^{N+1})}.$$
When $s>k^2$, it is apparent that the upper bound on the right hand side here converges to zero as $N$ goes to infinity. Such a bound comes close to delivering Theorem \ref{theorem1.1}. Two serious obstructions remain. The first is the removal of the assumption throughout that variables are suitably well-conditioned whenever this is essential. Since our auxiliary prime number $p$ is fixed once and for all at the opening of our argument, we are forced to engineer well-conditioning directly using this single prime $p$. Such has the potential to weaken substantially our conclusions, and we are forced to consider a complex iterative process rather difficult to control. The second obstruction is less severe. The condition $s>k^2$ must be replaced by $s=k^2$, and owing to the possibility of ill-conditioned solutions, a direct approach would be successful, at best, only when $s\ge k^2+k$. Once again, therefore, we are forced to negotiate delicate issues associated with a complex iterative process.

\section{Preliminary manoeuvres} We begin in this section with some notation and definitions of use in our subsequent discussion. Let $k$ be a fixed integer with $k\ge 2$, and let $\del$ be a small positive number. We consider a natural number $u$ with $u\ge k$, and we put $s=uk$. Our goal is to show that $\lam_{s+k}^*=2(s+k)-\frac{1}{2}k(k+1)$, whence $\eta_{s+k}=0$. In view of the infimal definition of $\lam_{s+k}^*$, there exists a sequence of natural numbers $(X_n)_{n=1}^\infty$, tending to infinity, with the property that
\begin{equation}\label{3.1}
J_{s+k}(X_n)>X_n^{\lam_{s+k}^*-\del}\quad (n\in \dbN).
\end{equation}
Provided that $X_n$ is sufficiently large, we have also for $X_n^{\del^2}<Y\le X_n$ the corresponding upper bounds
\begin{equation}\label{3.2}
J_t(Y)<Y^{\lam_t^*+\del}\quad (t=s,s+k).
\end{equation}
Notice that when $s>k^2$, the trivial inequality $|f(\bfalp;X)|\le X$ leads from (\ref{1.2}) to the upper bound
$$J_{s+k}(X)\le X^{2(s-k^2)}\oint |f(\alp;X)|^{2k(k+1)}\d\bfalp \le X^{2(s-k^2)}J_{k(k+1)}(X).$$
It then follows from the above discussion that whenever $s>k^2$, one has $\eta_{s+k}\le \eta_{k(k+1)}$. With an eye toward future applications, we shall continue to consider general values of $s$ with $s\ge k^2$ until the very climax of the proof of Theorem \ref{theorem1.1}, and only at that point specialise to the situation with $s=k^2$. As we have just shown, the desired conclusion when $s>k^2$ is an easy consequence of this special case. Finally, we take $N$ to be a natural number sufficiently large in terms of $s$ and $k$, and we put $\tet=\frac{1}{2}(k/s)^{N+1}$. Note that we are at liberty to take $\del$ to be a positive number with $\del<(Ns)^{-3N}$, so that $\del$ is in particular small compared to $\tet$. We focus now on a fixed element $X=X_n$ of the sequence $(X_n)$, which we may assume to be sufficiently large in terms of $s$, $k$, $N$ and $\del$, and put $M=X^\tet$. Thus we have $X^\del < M^{1/N}$.\par

Let $p$ be a fixed prime number with $M<p\le 2M$ to be chosen in due course. That such a prime exists is a consequence of the Prime Number Theorem. We will find it necessary to consider {\it well-conditioned} $k$-tuples of integers belonging to distinct congruence classes modulo a suitable power of $p$. Denote by $\Xi_c(\xi)$ the set of $k$-tuples $(\xi_1,\ldots ,\xi_k)$, with $1\le \xi_i\le p^{c+1}$ and $\xi_i\equiv \xi\pmod{p^c}$ $(1\le i\le k)$, and satisfying the property that $\xi_i\equiv \xi_j\pmod{p^{c+1}}$ for no $i$ and $j$ with $1\le i<j\le k$. In addition, write $\Sig_k=\{1,-1\}^k$, and consider an element $\bfsig$ of $\Sig_k$. Recalling the definition (\ref{2.2}), we then put
\begin{equation}\label{3.3}
\grF_c^\bfsig (\bfalp ;\xi)=\sum_{\bfxi \in \Xi_c(\xi)}\prod_{i=1}^k\grf_{c+1}(\sig_i\bfalp;\xi_i).
\end{equation}

\par Two mixed mean values play leading roles in our arguments. When $a$ and $b$ are non-negative integers, and $\bfsig, \bftau\in \Sig_k$, we define
\begin{equation}\label{3.4}
I^\bfsig_{a,b}(X;\xi,\eta)=\oint |\grF_a^\bfsig (\bfalp;\xi)^2\grf_b(\bfalp;\eta)^{2s}|\d\bfalp
\end{equation}
and
\begin{equation}\label{3.5}
K^{\bfsig,\bftau}_{a,b}(X;\xi,\eta)=\oint |\grF_a^\bfsig(\bfalp;\xi)^2\grF_b^\bftau (\bfalp;\eta)^{2u}|\d \bfalp .
\end{equation}
It is convenient then to put
\begin{equation}\label{3.6}
I_{a,b}(X)=\max_{1\le \xi\le p^a}\max_{1\le \eta\le p^b}\max_{\bfsig \in \Sig_k}I^\bfsig_{a,b}(X;\xi,\eta)
\end{equation}
and
\begin{equation}\label{3.7}
K_{a,b}(X)=\max_{1\le \xi\le p^a}\max_{1\le \eta\le p^b}\max_{\bfsig,\bftau\in \Sig_k}K_{a,b}^{\bfsig,\bftau}(X;\xi,\eta).
\end{equation}
Notice here that these mean values depend on our choice of $p$. However, since we will shortly fix this choice of $p$ once and for all, we suppress mention of this prime when referring to $I_{a,b}(X)$ and $K_{a,b}(X)$.\par

Our arguments are simplified considerably by making transparent the relationship between various mean values on the one hand, and the anticipated magnitude of these mean values on the other. Of course, such a concept may not be well-defined, and so we indicate in what follows quite concretely what is intended. We define the {\it normalised magnitude} of a mean value $\calM$ relative to its anticipated size $\calM^*$ to be $\calM/\calM^*$, a quantity we denote by $\ldbrack \calM \rdbrack$. In particular, we define
\begin{equation}\label{3.8}
\ldbrack J_t(X)\rdbrack =\frac{J_{t,k}(X)}{X^{2t-\frac{1}{2}k(k+1)}}\quad (t=s,s+k),
\end{equation}
and when $0\le a<b$, we define
\begin{align}
\ldbrack I_{a,b}(X)\rdbrack &=\frac{I_{a,b}(X)}{(X/M^b)^{2s}(X/M^a)^{2k-\frac{1}{2}k(k+1)}},\notag \\
\ldbrack K_{a,b}(X)\rdbrack &=\frac{K_{a,b}(X)}{(X/M^b)^{2s}(X/M^a)^{2k-\frac{1}{2}k(k+1)}}.\label{3.9}
\end{align}
Note that the lower bound (\ref{3.1}) implies that
\begin{equation}\label{3.10}
\ldbrack J_{s+k}(X)\rdbrack >X^{\eta_{s+k}-\del},
\end{equation}
while the upper bound (\ref{3.2}) ensures that, whenever $X^{\del^2}<Y\le X$, one has
\begin{equation}\label{3.11}
\ldbrack J_t(Y)\rdbrack <Y^{\eta_t+\del}\quad (t=s,s+k).
\end{equation}

\par Mean values of the exponential sum $\grf_c(\bfalp;\xi)$ are easily bounded by exploiting the translation-dilation invariance of the solution sets of the system of equations (\ref{1.3}). The argument is relatively familiar, though we provide details for the sake of completeness.

\begin{lemma}\label{lemma3.1}
Suppose that $c$ is a non-negative integer with $c\tet\le 1$. Then for each natural number $t$, one has
\begin{equation}\label{3.12a}
\max_{1\le \xi\le p^c}\oint |\grf_c(\bfalp;\xi)|^{2t}\,d\bfalp \ll_t J_t(X/M^c).
\end{equation}
\end{lemma}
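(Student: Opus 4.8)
To prove Lemma~\ref{lemma3.1}, I would argue directly from the translation--dilation invariance of the system (\ref{1.3}). Fix $\xi$ with $1\le\xi\le p^c$, put $\calA_\xi=\{x\in\dbZ:1\le x\le X,\ x\equiv\xi\mmod{p^c}\}$, and let $W=\#\calA_\xi$. By orthogonality, $\oint|\grf_c(\bfalp;\xi)|^{2t}\d\bfalp$ equals the number of solutions of (\ref{1.3}), with $s$ replaced by $t$, in which every one of the $2t$ variables is restricted to lie in $\calA_\xi$. The plan is to reorganise this quantity as $J_t(W)$, and then to bound $W$ by a fixed multiple of $X/M^c$.

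For the reorganisation, note that when $W\ge1$ one may write $\calA_\xi=\{a+p^cz:1\le z\le W\}$ with $a=\xi-p^c\in\dbZ$. Substituting $x_i=a+p^c\tx_i$ and $y_i=a+p^c\ty_i$ into (\ref{1.3}) and expanding by the Binomial Theorem, the contributions independent of the new variables cancel, leaving
$$\sum_{i=1}^t(x_i^j-y_i^j)=\sum_{l=1}^j\binom{j}{l}a^{j-l}p^{cl}\sum_{i=1}^t(\tx_i^l-\ty_i^l)\qquad(1\le j\le k).$$
Since $p^{cl}\neq0$, and the coefficient of $\sum_i(\tx_i^j-\ty_i^j)$ on the right is exactly $p^{cj}$, a short induction on $j$ shows that (\ref{1.3}) holds for $(\bfx,\bfy)$ if and only if $\sum_i\tx_i^j=\sum_i\ty_i^j$ for $1\le j\le k$. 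The substitution therefore provides a bijection between the two solution sets, whence $\oint|\grf_c(\bfalp;\xi)|^{2t}\d\bfalp=J_t(W)$; when $W=0$ both sides vanish.

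It then remains to estimate $W$. The number of integers of $[1,X]$ in a fixed residue class modulo $p^c$ is at most $Xp^{-c}+1$. Since $M<p$ we have $p^c>M^c$, and since $c\tet\le1$ we have $X/M^c=X^{1-c\tet}\ge1$; consequently $W\le XM^{-c}+1\le2XM^{-c}$. The monotonicity of $Y\mapsto J_t(Y)$ together with the standard bound $J_t(2Y)\ll_t J_t(Y)$ --- proved by splitting the range of summation in half, applying convexity of $z\mapsto z^{2t}$, and translating each half to begin at $1$ (one more application of the invariance just used) --- now yields $J_t(W)\le J_t(2XM^{-c})\ll_t J_t(XM^{-c})$. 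Taking the maximum over $\xi$ completes the proof of (\ref{3.12a}).

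The argument carries no serious obstacle: the heart of the matter is simply the verification, via the Binomial Theorem and the triangular induction, that the dilated system is equivalent to (\ref{1.3}) --- the precise content of ``translation--dilation invariance'' --- while the passage from $W$ to $X/M^c$ loses only a bounded factor, which is exactly why the conclusion carries $\ll_t$ rather than an explicit constant.
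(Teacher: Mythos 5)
Your proof is correct and follows essentially the same route as the paper's: both use orthogonality to interpret the integral as a count of solutions, and then exploit the Binomial Theorem (translation--dilation invariance) to strip off the arithmetic-progression condition. The only cosmetic difference is in the endgame: you obtain the exact identity $\oint|\grf_c(\bfalp;\xi)|^{2t}\d\bfalp=J_t(W)$ and then invoke $J_t(2Y)\ll_t J_t(Y)$ (which you correctly sketch), whereas the paper bounds the integral by $\oint|1+f(\bfalp;X/p^c)|^{2t}\d\bfalp\ll_t 1+J_t(X/p^c)$ and finishes using $J_t(X/M^c)\ge1$ --- the two sets of bookkeeping are interchangeable and equally routine.
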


\begin{proof} Let $\xi$ be an integer with $1\le \xi\le p^c$. From the definition (\ref{2.2}) of the exponential sum $\grf_c(\bfalp;\xi)$, one has
$$\grf_c(\bfalp;\xi)=\sum_{(1-\xi)/p^c\le y\le (X-\xi)/p^c}e(\psi (p^cy+\xi;\bfalp)),$$
in which $\psi(z;\bfalp)$ is given by (\ref{2.3}). By orthogonality, therefore, one finds that the integral on the left hand side of (\ref{3.12a}) counts the number of integral solutions of the system of equations
\begin{equation}\label{3.12}
\sum_{i=1}^t(p^cy_i+\xi)^j=\sum_{i=1}^t(p^cz_i+\xi)^j\quad (1\le j\le k),
\end{equation}
with $0\le \bfy, \bfz\le (X-\xi)/p^c$. An application of the Binomial Theorem shows that the pair $\bfy,\bfz$ satisfies (\ref{3.12}) if and only if it satisfies the system
$$\sum_{i=1}^ty_i^j=\sum_{i=1}^tz_i^j\quad (1\le j\le k).$$
Thus, on considering the underlying Diophantine system and recalling (\ref{1.1}) and (\ref{1.2}), we find that
\begin{align*}
\oint |\grf_c(\bfalp;\xi)|^{2t}\d \bfalp &\le \oint |1+f(\bfalp ;X/p^c)|^{2t}\d \bfalp \\
&\ll_t 1+\oint |f(\bfalp ;X/p^c)|^{2t}\d \bfalp \\
&=1+J_t(X/p^c).
\end{align*}
The desired conclusion follows on noting that diagonal solutions alone ensure that $J_t(X/M^c)\ge 1$.
\end{proof}

Our next preparatory manoeuvre concerns the initiation of the iterative procedure, and it is here that we fix our choice for $p$. It is convenient here and elsewhere to write ${\mathbf 1}$ for the $k$-tuple $(1,\ldots,1)$.

\begin{lemma}\label{lemma3.2}
There exists a prime number $p$ with $M<p\le 2M$ for which $J_{s+k}(X)\ll M^{2s}I_{0,1}(X)$.
\end{lemma}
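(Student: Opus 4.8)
The plan is to start from the Hölder bound (\ref{2.4}) with $c=1$, which gives
$$|f(\bfalp;X)|^{2s+2k}\le (p)^{2s+2k-1}\sum_{\xi=1}^{p}|\grf_1(\bfalp;\xi)|^{2s+2k},$$
so that after integrating and using $p\le 2M$ one obtains
$$J_{s+k}(X)\ll M^{2s+2k-1}\sum_{\xi=1}^{p}\oint |\grf_1(\bfalp;\xi)|^{2s+2k}\d\bfalp.$$
This crude bound has the wrong shape: it produces a factor $M^{2s+2k-1}$ rather than the desired $M^{2s}$, and it does not isolate a well-conditioned $k$-tuple. So instead I would split the $2s+2k$ copies of $\grf_1$ into a block of $2s$ copies (to become the $\grf_1(\bfalp;\eta)^{2s}$ factor) and a block of $2k$ copies which must be reorganised into $\grF_0^{\mathbf 1}(\bfalp;\xi)^2$. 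The key point is that $\grF_0^{\mathbf 1}(\bfalp;\xi)$ is a sum over well-conditioned $k$-tuples $\bfxi\in\Xi_0(\xi)$ of products $\prod_{i=1}^k \grf_1(\bfalp;\xi_i)$, and that for $x$ ranging over $1\le x\le X$, the congruence class $\xi_i=x\bmod p$ is almost always such that $k$ independently chosen copies land in distinct classes.

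Concretely, I would argue as follows. First, from (\ref{1.1}) write $f(\bfalp;X)=\sum_{\zeta=1}^p\grf_0(\bfalp;\zeta)$ (here $\grf_0(\bfalp;\zeta)=\grf_1(\bfalp;\zeta)$ with the convention $p^0$-classes, i.e.\ $\zeta$ is just the residue mod $p$), so that
$$f(\bfalp;X)^k=\sum_{\zeta_1=1}^p\cdots\sum_{\zeta_k=1}^p\prod_{i=1}^k\grf_1(\bfalp;\zeta_i).$$
Split the sum on the right according to whether $\boldsymbol{\zeta}=(\zeta_1,\ldots,\zeta_k)$ is well-conditioned (all $\zeta_i$ distinct mod $p$) or not. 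For the well-conditioned part, grouping by the common-residue label gives exactly $\sum_{\xi}\grF_0^{\mathbf 1}(\bfalp;\xi)$ up to the combinatorial factor $k!$ coming from orderings (since $\Xi_0(\xi)$ as defined does not quotient by the symmetric group — one should double-check the definition of $\Xi_0(\xi)$ and absorb a bounded constant). For the ill-conditioned part, at least two of the $\zeta_i$ coincide, so after relabelling this contributes $O(k^2)$ terms of the form $\grf_1(\bfalp;\zeta)^2 \cdot f(\bfalp;X)^{k-2}$; trivially bounding $|f(\bfalp;X)|\le X$ and $|\grf_1|\le X/M$ here, and noting $M=X^\tet$ with $\tet$ small, one sees this term is smaller than the main term by a factor $M^{-O(1)}$, hence negligible. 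Thus $|f(\bfalp;X)|^k\ll |\sum_\xi \grF_0^{\mathbf 1}(\bfalp;\xi)|+$ (negligible), so
$$|f(\bfalp;X)|^{2k}\ll \Bigl|\sum_{\xi=1}^p\grF_0^{\mathbf 1}(\bfalp;\xi)\Bigr|^2+(\text{negligible}).$$
Now also apply (\ref{2.4}) with $c=1$ to the remaining $2s$ factors of $f$:
$$|f(\bfalp;X)|^{2s}\le p^{2s-1}\sum_{\eta=1}^p|\grf_1(\bfalp;\eta)|^{2s}.$$
Multiplying these two bounds, integrating, and applying Cauchy--Schwarz to handle the square of the $\xi$-sum (or expanding it and pigeonholing onto the diagonal, which suffices since off-diagonal terms are controlled the same way), one is left with
$$J_{s+k}(X)\ll M^{2s-1}\sum_{\xi,\eta}\oint|\grF_0^{\mathbf 1}(\bfalp;\xi)^2\grf_1(\bfalp;\eta)^{2s}|\d\bfalp+(\text{negligible}).$$
The sum over $\eta$ has $p\le 2M$ terms, which supplies one more factor of $M$; taking the maximum over $\xi$ and $\eta$ and recalling the definitions (\ref{3.4}) and (\ref{3.6}) of $I^{\mathbf 1}_{0,1}$ and $I_{0,1}$, this gives $J_{s+k}(X)\ll M^{2s}I_{0,1}(X)$ for each admissible $p$, and in particular for any prime with $M<p\le 2M$.

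The main obstacle I expect is bookkeeping the well-conditioning split cleanly: one must check that the definition of $\Xi_0(\xi)$ in (\ref{3.3}) matches what falls out of expanding $f^k$ (in particular whether ordered or unordered $k$-tuples are used, which only affects a bounded constant), and one must verify that the ``negligible'' ill-conditioned contributions really are dominated — this uses $\tet$ being small, so that the power of $M$ lost in $|\grf_1|\le X/M$ in the ill-conditioned term beats the $M^{O(1)}$ saving in the main term, and it uses the lower bound (\ref{3.10})/(\ref{3.1}) to know how large the main term $J_{s+k}(X)$ is. A secondary point is that there is flexibility in the choice of $p$: any prime in $(M,2M]$ works for this particular lemma, and the actual optimisation of $p$ (to ensure well-conditioning later in the iteration) is deferred; the statement as given only asserts existence, so it suffices to run the above with, say, the least prime exceeding $M$, whose existence follows from the Prime Number Theorem as stated.
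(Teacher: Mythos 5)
Your decomposition of $f^k$ into well- and ill-conditioned $k$-tuples modulo $p$ is the right shape, but the step where you discard the ill-conditioned part ``trivially'' is a genuine gap. After squaring and inserting $|f|^{2s}$, the ill-conditioned contribution is bounded by something like $X^{2k}M^{-2}J_s(X)$ (note there is a sum over $\zeta$ of length $p$, so the pointwise saving is only one power of $M$, not two); comparing with $J_{s+k}(X)$ using (\ref{3.1}) and (\ref{3.2}), the ratio is of order $M^{-O(1)}X^{\eta_s-\eta_{s+k}}$. Since $\eta_s\ge\eta_{s+k}\ge 0$ and $\eta_s-\eta_{s+k}$ may a priori be as large as order $k$, while $M^{-O(1)}=X^{-O(\theta)}$ is only a microscopic saving ($\theta$ is tiny), this is not bounded. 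The trivial bound $|f|\le X$ on the $k-2$ spare factors loses far more than the $|\grf_1|\le X/p$ bound gains, so ``hence negligible'' does not follow. One could in principle salvage the estimate by applying H\"older to the ill-conditioned Diophantine count and using Lemma \ref{lemma3.1}, but that is a different argument from the one you wrote.

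The paper sidesteps this entirely. It splits $J_{s+k}(X)$ according to whether $x_1,\dots,x_k$ are distinct \emph{as integers} (not merely mod $p$), and shows by H\"older that the coincident part $T_0$ satisfies $T_0\ll J_{s+k}(X)^{1-1/(2s+2k)}$, a genuine power saving. For the distinct part $T_1$, it sets $\Delta(\bfx)=\prod_{i<j}|x_i-x_j|$ and chooses a set $\calP$ of $[k^3/\theta]+1$ primes in $(M,2M]$; since $\prod_{p\in\calP}p>X^{k^3}>\Delta(\bfx)$, for every solution some $p\in\calP$ does not divide $\Delta(\bfx)$, making $x_1,\dots,x_k$ distinct mod that $p$. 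Summing over $\calP$ and applying Schwarz gives $T_1\ll\max_{p\in\calP}(I_{0,0}(X))^{1/2}(J_{s+k}(X))^{1/2}$, hence $J_{s+k}(X)\ll I_{0,0}(X)$ for the maximising prime, and then splitting $\grf_0$ into classes mod $p$ yields the $M^{2s}$ factor. This pigeonhole over a \emph{bounded} set of primes is precisely the device that removes the need to estimate the ill-conditioned-mod-$p$ contribution for a single fixed $p$. It also explains why the lemma asserts only \emph{existence} of a good $p$: your closing remark that ``any prime in $(M,2M]$ works'' is not established by either argument.
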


\begin{proof} The quantity $J_{s+k}(X)$ counts the number of integral solutions of the system
$$\sum_{i=1}^{s+k}(x_i^j-y_i^j)=0\quad (1\le j\le k),$$
with $1\le \bfx,\bfy\le X$. Let $T_0$ denote the number of such solutions in which $x_i=x_j$ for some $i$ and $j$ with $1\le i<j\le k$, and let $T_1$ denote the corresponding number of solutions with $x_i=x_j$ for no $i$ and $j$ with $1\le i<j\le k$.\par

On considering the underlying Diophantine system, one finds that
$$T_0\ll \oint f(2\bfalp ;X)f(\bfalp ;X)^{s+k-2}f(-\bfalp ;X)^{s+k}\d\bfalp ,$$
whence by H\"older's inequality, it follows that
$$T_0\ll \Bigl( \oint |f(\bfalp ;X)|^{2s+2k}\d\bfalp \Bigr)^{1-1/(s+k)}\Bigl( \oint |f(2\bfalp ;X)|^{2s+2k}\d\bfalp \Bigr)^{1/(2s+2k)}.$$
Thus, by a change of variables, we obtain the upper bound
\begin{equation}\label{3.13}
T_0\ll (J_{s+k}(X))^{1-1/(2s+2k)}.
\end{equation}

\par Consider next a solution $\bfx,\bfy$ counted by $T_1$. Write
$$\Del(\bfx)=\prod_{1\le i<j\le k}|x_i-x_j|,$$
and note that $0<\Del (\bfx) <X^{k(k-1)}$. Let $\calP$ denote any set of $[k^3/\tet]+1$ distinct prime numbers with $M<p\le 2M$. Such a set exists by the Prime Number Theorem. It follows that
$$\prod_{p\in \calP}p>M^{k^3/\tet}=X^{k^3}>\Del(\bfx),$$
and hence one at least of the elements of $\calP$ does not divide $\Del(\bfx)$. In particular, there exists a prime $p\in \calP$ for which $x_i\equiv x_j\pmod{p}$ for no $i$ and $j$ with $1\le i<j\le k$. On considering the underlying Diophantine system, we therefore see that
$$T_1\ll \sum_{p\in \calP}\oint \grF_0^{\mathbf 1}(\bfalp ;0)f(\bfalp;X)^sf(-\bfalp;X)^{s+k}\d\bfalp .$$
Therefore, as a consequence of Schwarz's inequality, one finds that
\begin{align*}
T_1&\ll \max_{p\in \calP}\Bigl( \oint |\grF_0^{\mathbf 1}(\bfalp;0)^2f(\bfalp ;X)^{2s}|\d\bfalp \Bigr)^{1/2}\Bigl( \oint |f(\bfalp;X)|^{2s+2k}\d\bfalp\Bigr)^{1/2}\\
&=\max_{p\in \calP}\Bigl( \oint |\grF_0^{\mathbf 1}(\bfalp ;0)^2\grf_0(\bfalp ;0)^{2s}|\d\bfalp \Bigr)^{1/2}\bigl( J_{s+k}(X)\Bigr)^{1/2}.
\end{align*}
In this way, we deduce that a prime number $p$ with $M<p\le 2M$ exists for which
\begin{equation}\label{3.14}
T_1\ll (I_{0,0}(X))^{1/2}(J_{s+k}(X))^{1/2}.
\end{equation}
On recalling that $J_{s+k}(X)=T_0+T_1$, we find from (\ref{3.13}) and (\ref{3.14}) that
\begin{equation}\label{3.15}
J_{s+k}(X)\ll 1+I_{0,0}(X)\ll I_{0,0}(X).
\end{equation}

\par Next, we split the summation in the definition (\ref{2.2}) of $\grf_0(\bfalp;0)$ into arithmetic progressions modulo $p$. Thus we obtain
$$\grf_0(\bfalp;0)=\sum_{\xi=1}^p\grf_1(\bfalp;\xi),$$
whence by H\"older's inequality one has
$$|\grf_0(\bfalp;0)|^{2s}\le p^{2s-1}\sum_{\xi=1}^p|\grf_1(\bfalp;\xi)|^{2s}.$$
It therefore follows from (\ref{3.4}) and (\ref{3.6}) that
\begin{equation}\label{3.16}
I_{0,0}(X)\ll M^{2s}\max_{1\le \xi\le p}\max_{\bfsig \in \Sig_k}\oint |\grF_0^\bfsig (\bfalp;0)^2\grf_1(\bfalp;\xi)^{2s}|\d\bfalp \le M^{2s}I_{0,1}(X).
\end{equation}
The conclusion of the lemma is obtained by subtituting (\ref{3.16}) into (\ref{3.15}).
\end{proof}

We now fix the prime number $p$, once and for all, so that the upper bound $J_{s+k}(X)\ll M^{2s}I_{0,1}(X)$ holds.

\section{The auxiliary system of congruences} The efficient congruencing process delivers a strong congruence condition on a subset of variables. In order to be useful in further congruencing activities, this condition must be converted into a restriction of certain variables to higher level arithmetic progressions. It is to this task that we attend in the present section.\par

When $\bfsig \in \Sig_k$, denote by $\calB^\bfsig_{a,b}(\bfm;\xi,\eta)$ the set of solutions of the system of congruences
\begin{equation}\label{4.1}
\sum_{i=1}^k\sig_i(z_i-\eta)^j\equiv m_j\pmod{p^{jb}}\quad (1\le j\le k),
\end{equation}
with $1\le z_i\le p^{kb}$ and $\bfz\equiv \bfxi\pmod{p^{a+1}}$ for some $\bfxi\in \Xi_a(\xi)$.

\begin{lemma}\label{lemma4.1} Suppose that $a$ and $b$ are non-negative integers with $b>a$. Then
$$\max_{1\le \xi\le p^a}\max_{1\le \eta\le p^b}\max_{\bfsig\in \Sig_k}\text{card}\left( \calB^\bfsig_{a,b}(\bfm;\xi,\eta)\right) \le k!p^{\frac{1}{2}k(k-1)(a+b)}.$$
\end{lemma}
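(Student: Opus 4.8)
The plan is to count solutions of the congruence system (\ref{4.1}) by reducing everything modulo $p^{a+1}$, where the well-conditioning hypothesis on $\bfxi\in\Xi_a(\xi)$ provides a nonsingularity condition, and then lifting uniquely via a Jacobian/Hensel argument to the full modulus $p^{kb}$. First I would observe that $\Xi_a(\xi)$ records precisely those $k$-tuples $\bfxi$ with $\xi_i\equiv\xi\pmod{p^a}$ which are pairwise distinct modulo $p^{a+1}$; writing $\bfxi=\xi\mathbf{1}+p^a\bfzet$ with $\bfzet$ ranging over residues modulo $p$, distinctness modulo $p^{a+1}$ means the $\zet_i$ are distinct modulo $p$, so there are at most $k!$ choices for the residue class $\bfxi\bmod p^{a+1}$ up to permutation, and hence at most $k!$ choices for $\bfz\bmod p^{a+1}$ in $\calB^\bfsig_{a,b}(\bfm;\xi,\eta)$ up to the obvious symmetric-group action. (Actually the permutation freedom among the $z_i$ is already built into the count; I would simply bound the number of residue classes modulo $p^{a+1}$ compatible with membership of some $\bfxi\in\Xi_a(\xi)$.)

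The core step is then: for a fixed admissible residue vector $\bfz_0\bmod p^{a+1}$, estimate how many lifts $\bfz\bmod p^{kb}$ satisfy (\ref{4.1}). I would substitute $z_i=z_{0,i}+p^{a+1}w_i$ and examine the system as a polynomial system in $\bfw$ modulo the relevant powers of $p$. The key algebraic fact is that the Jacobian matrix of the map $\bfz\mapsto\bigl(\sum_i\sig_i(z_i-\eta)^j\bigr)_{1\le j\le k}$ is, up to the diagonal factor $\mathrm{diag}(\sig_1,\dots,\sig_k)$ and an overall constant, a Vandermonde-type matrix in the quantities $z_i-\eta$, with determinant a nonzero multiple of $\prod_{i<j}(z_i-z_j)$. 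Because $z_i-z_j\equiv z_{0,i}-z_{0,j}\pmod{p^{a+1}}$ and the $z_{0,i}$ are distinct modulo $p^{a+1}$ (they come from a tuple in $\Xi_a(\xi)$ after the shift), this Vandermonde determinant has $p$-adic valuation exactly $\frac{1}{2}k(k-1)a$ — each of the $\binom{k}{2}$ factors $z_i-z_j$ is divisible by $p^a$ but not $p^{a+1}$ (here one uses $\xi_i\equiv\xi_j\pmod{p^a}$ but not modulo $p^{a+1}$). This controlled, uniform divisibility of the Jacobian is exactly the input needed for a quantitative Hensel-type lifting lemma: solutions modulo $p^{jb}$ (for $j=1,\dots,k$, compatibly) lift to solutions modulo $p^{kb}$ with multiplicity bounded by a fixed power of $p$ determined by the Jacobian valuation, and that power is $p^{\frac{1}{2}k(k-1)b}$ — the $j$th equation contributes a slack factor $p^{(k-j)\cdot(\text{something})}$, summing to $\tfrac12 k(k-1)b$. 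Combining the $k!$ from the residue-class count, the $p^{\frac12 k(k-1)a}$ from the Jacobian valuation, and the $p^{\frac12 k(k-1)b}$ from the lifting slack yields the claimed bound $k!\,p^{\frac12 k(k-1)(a+b)}$.

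\textbf{The main obstacle.} The delicate point will be making the Hensel/Jacobian lifting argument genuinely quantitative and uniform: standard Hensel's lemma gives unique lifting only when the Jacobian is a unit, whereas here it has valuation $\frac12 k(k-1)a$, so one must track how the gap between the moduli $p^{jb}$ and $p^{kb}$ interacts with the partial derivatives of each individual equation $\sum_i\sig_i(z_i-\eta)^j$, whose $\bfz$-gradient is $(j\sig_i(z_i-\eta)^{j-1})_i$ and picks up its own powers of $p$. The cleanest route is probably to argue by induction on $j$, solving the congruences one degree at a time (first $j=1$ modulo increasing powers of $p$, then $j=2$, etc.), at each stage using a linear-algebra/triangularisation estimate for how many solutions of a linear system over $\dbZ/p^m$ exist given the Smith normal form (equivalently, the valuations of the relevant minors) of the coefficient matrix — and the Vandermonde structure pins those valuations down exactly. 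One must also be slightly careful that the condition $\bfz\equiv\bfxi\pmod{p^{a+1}}$ is imposed for \emph{some} $\bfxi\in\Xi_a(\xi)$, so strictly one sums the per-class bound over the (at most $k!$) admissible classes; and one should check the edge case where some $z_i-\eta$ is divisible by a high power of $p$, which does not affect the Vandermonde valuation since that depends only on the differences $z_i-z_j$. I expect the bookkeeping of $p$-powers across the $k$ equations to be the part requiring genuine care, but no conceptual difficulty beyond the Vandermonde observation.
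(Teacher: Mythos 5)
The overall shape of your accounting (a factor $k!$, a factor $p^{\frac12k(k-1)b}$ from the mismatch of moduli, and a factor $p^{\frac12k(k-1)a}$ from the degeneracy introduced by $z_i\equiv\xi\pmod{p^a}$) matches the paper's bound, and your Vandermonde observation is the right algebraic input. However, the way you source the $k!$ is wrong. You claim that there are ``at most $k!$ choices for $\bfz\bmod p^{a+1}$'' simply from the definition of $\Xi_a(\xi)$, but $\Xi_a(\xi)$ has cardinality $p(p-1)\cdots(p-k+1)\gg p^k$, not $k!$: well-conditioning says the $\zet_i$ are \emph{distinct} mod $p$, which restricts to an injective tuple, not to a single multiset up to permutation. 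The $k!$ must instead come from showing that the \emph{congruences themselves} pin down the multiset $\{z_i\bmod p^{a+1}\}$ (equivalently $\{y_i\bmod p\}$ after the substitution $z_i=p^ay_i+\xi$). This is precisely what the paper does: after raising all moduli to $p^{kb}$ (your ``slack'' factor $p^{\frac12k(k-1)b}$), substituting $z_i=p^ay_i+\xi$, row-reducing to isolate the power sums $\sum_i\sig_iy_i^j$ mod $p^{kb-ja}$, and relaxing again to a common modulus $p^{kb-a}$ (your ``Jacobian'' factor $p^{\frac12k(k-1)a}$), one invokes Newton's formulae and the pairwise distinctness of the $y_i$ mod $p$ to conclude that the sets $\{y_i:\sig_i=1\}$ and $\{y_i:\sig_i=-1\}$ are determined mod $p^{kb-a}$, giving at most $k!$ solutions. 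Without this Newton/Vandermonde uniqueness argument, your ``fix an admissible $\bfz_0\bmod p^{a+1}$ and count lifts'' step would have to be multiplied by $|\Xi_a(\xi)|$, which destroys the bound.

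Beyond this, the Hensel step you propose does not run cleanly as stated. You substitute $z_i=z_{0,i}+p^{a+1}w_i$, which leaves the Jacobian with valuation exactly $\frac12k(k-1)a$; a quantitative Hensel lemma with Jacobian valuation $\delta$ requires an initial solution modulo $p^{2\delta+1}=p^{k(k-1)a+1}$, far beyond $p^{a+1}$, so you cannot invoke Hensel directly from the level $p^{a+1}$. You would also need to manage the equations having different target moduli $p^{jb}$. The paper sidesteps all of this: the substitution $z_i=p^ay_i+\xi$ makes the Vandermonde determinant of the $y_i$ a $p$-adic \emph{unit}, so no Hensel lemma is needed; the moduli discrepancies are handled purely combinatorially by the three relaxation steps $\calB\to\calD_1\to\calD_2\to\calD_3$, and the $k!$ falls out of Newton's formulae. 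Your strategy is a reasonable ``Jacobian-rank'' reading of the same phenomenon, but in its current form it both misattributes the $k!$ and leaves the non-unit-Jacobian lifting unresolved; repairing it essentially forces you back to the paper's elementary argument.
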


\begin{proof} Consider fixed integers $a$ and $b$ with $0\le a<b$, a fixed $k$-tuple $\bfsig\in \Sig_k$, and fixed integers $\xi$ and $\eta$ with $1\le \xi\le p^a$ and $1\le \eta\le p^b$. Denote by $\calD_1(\bfn)$ the set of solutions of the system of congruences
\begin{equation}\label{4.2}
\sum_{i=1}^k\sig_i(z_i-\eta)^j\equiv n_j\pmod{p^{kb}}\quad (1\le j\le k),
\end{equation}
with $1\le \bfz\le p^{kb}$ and $\bfz\equiv \bfxi\pmod{p^{a+1}}$ for some $\bfxi\in \Xi_a(\xi)$. Then it follows from (\ref{4.1}) that we have
$$\text{card}(\calB^\bfsig_{a,b}(\bfm;\xi,\eta))=\sum_{\substack{n_1\equiv m_1\mmod{p^b}\\ 1\le n_1\le p^{kb}}}\ldots \sum_{\substack{n_k\equiv m_k\mmod{p^{kb}}\\ 1\le n_k\le p^{kb}}}\text{card}(\calD_1(\bfn)).$$
Counting the number of $k$-tuples $\bfn$ with $1\le \bfn\le p^{kb}$ for which $n_j\equiv m_j\pmod{p^{jb}}$ $(1\le j\le k)$, therefore, we see that
\begin{equation}\label{4.3}
\text{card}(\calB^\bfsig_{a,b}(\bfm;\xi,\eta))\le p^{\frac{1}{2}k(k-1)b}\max_{1\le \bfn\le p^{kb}}\text{card}(\calD_1(\bfn)).
\end{equation}

\par We now examine the system (\ref{4.2}). We begin by rewriting each variable $z_i$ in the shape $z_i=p^ay_i+\xi$. In view of the hypothesis that $\bfz\equiv \bfxi\pmod{p^{a+1}}$ for some $\bfxi\in \Xi_a(\xi)$, we find that the $k$-tuple $\bfy$ satisfies the condition that $y_i\equiv y_j\pmod{p}$ for no $i$ and $j$ with $1\le i<j\le k$. With this substitution in (\ref{4.2}), we find by the Binomial Theorem that the set of solutions $\calD_1(\bfn)$ is in bijective correspondence with the set of solutions of the system of congruences
\begin{equation}\label{4.4}
\sum_{l=0}^j\binom{j}{l}(\xi-\eta)^{j-l}p^{la}\sum_{i=1}^k\sig_iy_i^l\equiv m_j\pmod{p^{kb}}\quad (1\le j\le k),
\end{equation}
with $1\le \bfy\le p^{kb-a}$. Let $\bfy=\bfw$ be any solution of this system, if indeed a solution exists. Then it follows from (\ref{4.4}) that all other solutions $\bfy$ satisfy the system of congruences
\begin{equation}\label{4.5}
\sum_{l=0}^j\binom{j}{l}(\xi-\eta)^{j-l}p^{la}\sum_{i=1}^k\sig_i(y_i^l-w_i^l)\equiv 0\pmod{p^{kb}}\quad (1\le j\le k).
\end{equation}
By taking linear combinations of the congruences here, we find that the system (\ref{4.5}) is equivalent to the new system
$$\sum_{i=1}^k\sig_iy_i^j\equiv \sum_{i=1}^k\sig_iw_i^j\pmod{p^{kb-ja}}\quad (1\le j\le k).$$

Next, we write $\calD_2(\bfu)$ for the set of solutions of the system of congruences
$$\sum_{i=1}^k\sig_iy_i^j\equiv u_j\pmod{p^{kb-ja}}\quad (1\le j\le k),$$
with $1\le \bfy\le p^{kb-a}$ and $y_i\equiv y_j\pmod{p}$ for no $i$ and $j$ with $1\le i<j\le k$. Then it follows from our discussion thus far that
\begin{equation}\label{4.7}
\text{card}(\calD_1(\bfn))\le \max_{1\le \bfu\le p^{kb-a}}\text{card}(\calD_2(\bfu)).
\end{equation}
Denote by $\calD_3(\bfv)$ the set of solutions of the system of congruences
\begin{equation}\label{4.8}
\sum_{i=1}^k\sig_iy_i^j\equiv v_j\pmod{p^{kb-a}}\quad (1\le j\le k),
\end{equation}
with $1\le \bfy\le p^{kb-a}$ and $y_i\equiv y_j\pmod{p}$ for no $i$ and $j$ with $1\le i<j\le k$. Then we have
$$\text{card}(\calD_2(\bfu))\le \sum_{\substack{v_1\equiv u_1\mmod{p^{kb-a}}\\ 1\le v_1\le p^{kb-a}}}\ldots \sum_{\substack{v_k\equiv u_k\mmod{p^{kb-ka}}\\ 1\le v_k\le p^{kb-a}}}\text{card}(\calD_3(\bfv)).$$
Counting the number of $k$-tuples $\bfv$ with $1\le \bfv\le p^{kb-a}$ for which $v_j\equiv u_j\pmod{p^{kb-ja}}$ $(1\le j\le k)$, therefore, we deduce that
$$\text{card}(\calD_2(\bfu))\le p^{\frac{1}{2}k(k-1)a}\max_{1\le \bfv\le p^{kb-a}}\text{card}(\calD_3(\bfv)).$$
Consequently, in combination with (\ref{4.3}) and (\ref{4.7}), we have shown thus far that
\begin{equation}\label{4.9}
\text{card}(\calB^\bfsig_{a,b}(\bfm;\xi,\eta))\le p^{\frac{1}{2}k(k-1)(a+b)}\max_{1\le \bfv\le p^{kb-a}}\text{card}(\calD_3(\bfv)).
\end{equation}

\par Suppose now that $\bfy=\bfz$ is any solution of (\ref{4.8}) belonging to $\calD_3(\bfv)$, if one exists. Then all other solutions $\bfy$ satisfy the system
$$\sum_{i=1}^k\sig_iy_i^j\equiv \sum_{i=1}^k\sig_iz_i^j\pmod{p^{kb-a}}\quad (1\le j\le k).$$
Let $\calI$ denote the set of indices $i$ with $1\le i\le k$ for which $\sig_i=1$, and let $\calJ$ denote the corresponding set of indices for which $\sig_i=-1$. Then this system of congruences is equivalent to the new system
$$\sum_{i\in \calI}y_i^j+\sum_{l\in \calJ}z_l^j\equiv \sum_{i\in \calI}z_i^j+\sum_{l\in \calJ}y_l^j\pmod{p^{kb-a}}\quad (1\le j\le k).$$
We are at liberty to assume that $p>k$. Consequently, from Newton's formulae relating the sums of powers of the roots of a polynomial with its coefficients, we find that
$$\prod_{i\in \calI}(t-y_i)\prod_{l\in \calJ}(t-z_l)\equiv \prod_{j\in \calI}(t-z_j)\prod_{m\in \calJ}(t-y_m)\pmod{p^{kb-a}}.$$
But $z_l\equiv z_m\pmod{p}$ for no $l$ and $m$ with $1\le l<m\le k$. Then for each $j$ with $j\in \calI$, by putting $t=z_j$ we deduce that
$$\prod_{i\in \calI}(z_j-y_i)\prod_{l\in \calJ}(z_j-z_l)\equiv 0\pmod{p^{kb-a}},$$
whence for some $i$ with $i\in \calI$ one has $y_i\equiv z_j\pmod{p^{kb-a}}$. Similarly, for each $l$ with $l\in \calJ$, we deduce that for some $m$ with $m\in \calJ$, one has $y_m\equiv z_l\pmod{p^{kb-a}}$. It follows that the sets $\{y_1,\dots ,y_k\}$ and $\{z_1,\dots ,z_k\}$ are mutually congruent modulo $p^{kb-a}$, whence $\text{card}(\calD_3(\bfv))\le k!$. The conclusion of the lemma now follows at once from (\ref{4.9}).
\end{proof}

\section{The conditioning process} The mean value $I^\bfsig_{a,b}(X)$, defined via (\ref{3.4}), is already in a form suitable for the extraction of an efficient congruence. Unfortunately, however, one would be poorly positioned to extract the next efficient congruence following the one at hand were one not to plan ahead by conditioning the auxiliary variables encoded by the exponential sum $\grf_b(\bfalp;\eta)$. In this section we show that the factor $\grf_b(\bfalp;\eta)^{2s}$ occurring in (\ref{3.4}) can, in essence, be replaced by the conditioned factor $\grF_b^\bftau (\bfalp;\eta)^{2u}$. The latter involves $k$-tuples of variables in residue classes distinct modulo $p^{b+1}$, and is suitable for subsequent congruencing operations.

\begin{lemma}\label{lemma5.1}
Let $a$ and $b$ be integers with $b>a\ge 0$. Then one has
$$I_{a,b}(X)\ll K_{a,b}(X)+M^{k-1}I_{a,b+1}(X).$$
\end{lemma}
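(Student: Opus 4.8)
The plan is to classify the $2s$ auxiliary variables underlying $\grf_b(\bfalp;\eta)^{2s}$ according to their residues modulo $p^{b+1}$. Since $s=uk$, I would split $\grf_b(\bfalp;\eta)=\sum_\zeta\grf_{b+1}(\bfalp;\zeta)$, the sum being over the $p$ residues $\zeta$ with $1\le\zeta\le p^{b+1}$ and $\zeta\equiv\eta\pmod{p^b}$, and write
$$\grf_b(\bfalp;\eta)^k=\grF_b^{\mathbf 1}(\bfalp;\eta)+\calE(\bfalp;\eta),$$
where $\grF_b^{\mathbf 1}(\bfalp;\eta)$ gathers those $k$-tuples $(\zeta_1,\dots,\zeta_k)$ lying in $\Xi_b(\eta)$, and $\calE(\bfalp;\eta)$ gathers the remaining, ill-conditioned tuples, those having $\zeta_i\equiv\zeta_j\pmod{p^{b+1}}$ for some $i<j$. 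Raising to the power $2u$ via the triangle and binomial inequalities, using $|\grf_b(\bfalp;\eta)^k|\le|\grF_b^{\mathbf 1}(\bfalp;\eta)|+|\calE(\bfalp;\eta)|$, gives
$$|\grf_b(\bfalp;\eta)|^{2s}\le|\grF_b^{\mathbf 1}(\bfalp;\eta)|^{2u}+\sum_{r=1}^{2u}\binom{2u}{r}|\grF_b^{\mathbf 1}(\bfalp;\eta)|^{2u-r}|\calE(\bfalp;\eta)|^r.$$
Substituting into (\ref{3.4}) and integrating against $|\grF_a^\bfsig(\bfalp;\xi)|^2$, the leading term contributes precisely $K^{\bfsig,{\mathbf 1}}_{a,b}(X;\xi,\eta)\le K_{a,b}(X)$, which is the first term of the assertion.

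It then remains to show that each error integral $\oint|\grF_a^\bfsig(\bfalp;\xi)|^2|\grF_b^{\mathbf 1}(\bfalp;\eta)|^{2u-r}|\calE(\bfalp;\eta)|^r\,\d\bfalp$ is $\ll K_{a,b}(X)+M^{k-1}I_{a,b+1}(X)$. The mechanism is that a coincidence $\zeta_i\equiv\zeta_j\pmod{p^{b+1}}$ is a codimension-one restriction: the coincident pair together ranges over only $p$ classes modulo $p^{b+1}$, not $p^2$. Accordingly I would bound $|\calE(\bfalp;\eta)|$ by isolating the coincident index pair and recombining the free factors, producing an expression built from $\grG(\bfalp;\eta)=\sum_\zeta\grf_{b+1}(\bfalp;\zeta)^2$, which ranges over $p$ classes only, together with lower powers of $\grf_b(\bfalp;\eta)$; then apply H\"older's inequality inside the integral so as to peel off $|\grF_b^{\mathbf 1}(\bfalp;\eta)|^{2u-r}$ as a power of $K^{\bfsig,{\mathbf 1}}_{a,b}(X;\xi,\eta)$, to recognise the residual $2s$-fold product of exponential sums $\grf_{b+1}$ (at possibly distinct arguments) as bounded by $I_{a,b+1}(X)$ via the weighted arithmetic--geometric mean inequality and (\ref{3.6}), and to dispatch the finitely many summations over residue classes modulo $p^{b+1}$, each of which costs a factor $p\le 2M$. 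A concluding application of Young's inequality resolves the resulting geometric mean into the two terms $K_{a,b}(X)$ and $M^{k-1}I_{a,b+1}(X)$, and taking maxima over $\xi$, $\eta$ and $\bfsig$ delivers the lemma.

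The principal obstacle is the precise exponent $k-1$. A crude treatment of the ill-conditioned part — for instance, estimating $|\calE(\bfalp;\eta)|$ trivially — produces only the far weaker power $M^{2s}$; and even retaining $\calE$ in its sharper, codimension-one form but estimating $|\calE|^r$ monolithically still leaves an excess of order $2u$ in the exponent. Securing the clean bound $M^{k-1}$ forces one to distribute the conditioning with care: the factor $|\grF_b^{\mathbf 1}(\bfalp;\eta)|^{2u-r}$ must be kept intact throughout (rather than traded for powers of $p$), and the H\"older exponents must be chosen so that the only irreducible losses of powers of $p$ are those from the summations rebuilding $\grf_b$ out of $\grf_{b+1}$, from the single summation implicit in the coincident pair, and from the normalisation built into $I_{a,b+1}$ — exactly what collapses, once the weighting from the H\"older step is undone, to $p^{k-1}\ll M^{k-1}$. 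A secondary point needing attention is that neither the multinomial coefficients generated by passing to the $2u$th power, nor the combinatorial factor counting the coincident pair among the $k$ variables of a block, may be allowed to contaminate the coefficient of $K_{a,b}(X)$; this is ensured by always controlling the $\grF_b^{\mathbf 1}$-factors directly by $K_{a,b}(X)$, with only $s$- and $k$-dependent constants.
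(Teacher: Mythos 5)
Your core idea of decomposing $\grf_b(\bfalp;\eta)^k$ into a well-conditioned part $\grF_b^{\mathbf 1}(\bfalp;\eta)$ plus an ill-conditioned error $\calE(\bfalp;\eta)$ is natural, but it departs from the paper's argument in a way that creates a genuine, unrepaired gap. The paper's dichotomy is \emph{global}: it splits solutions according to whether all $2s$ auxiliary variables $v_1,\dots,v_s,w_1,\dots,w_s$ \emph{together} occupy at most $k-1$ residue classes modulo $p^{b+1}$ (the source of the factor $p^{k-1}$, obtained by choosing those classes once), or at least $k$ classes (in which case \emph{some} $k$ of the $2s$ variables — not a predetermined block — can be relabelled to form the conditioned factor $\grF_b^\bftau$, after which H\"older with exponents $1/(2u)$ and $1-1/(2u)$ gives $T_2\ll K_{a,b}^{1/(2u)}I_{a,b}^{1-1/(2u)}$, and one disentangles). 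Your decomposition is instead \emph{local}, fixing the partition of the $2s$ variables into $2u$ blocks of $k$ in advance and testing each block for coincidences independently. This fragments the conditioning irretrievably: a solution can have every block ill-conditioned (so $r=2u$ in your binomial expansion) while the $2s$ variables collectively span many more than $k$ classes — exactly the solutions the paper routes to $T_2$ — and your framework has no way to reassemble a conditioned $k$-tuple from across blocks.

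The quantitative consequence is fatal to the stated exponent. The term $r=2u$ leaves $|\calE(\bfalp;\eta)|^{2u}$ alone, with no $|\grF_b^{\mathbf 1}|$ to keep intact, so the mitigation you propose (``keep $|\grF_b^{\mathbf 1}|^{2u-r}$ intact throughout'') is vacuous there. Writing $\tilde S_1=\sum_\zeta|\grf_{b+1}(\bfalp;\zeta)|$ and $\tilde S_2=\sum_\zeta|\grf_{b+1}(\bfalp;\zeta)|^2$, one has $|\calE|\ll\tilde S_2\tilde S_1^{k-2}$ (note these are not $\grG$ or $\grf_b$, because of the absolute values), and no application of Cauchy--Schwarz, AM--GM and H\"older reduces $\oint|\grF_a^\bfsig|^2|\calE|^{2u}\,\d\bfalp$ below roughly $M^{2u(k-1)}I_{a,b+1}(X)$: there are about $2u(k-1)$ independent summations over residues modulo $p^{b+1}$, each unavoidably costing a factor $p$. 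Feeding this through your H\"older step gives $\oint|\grF_a^\bfsig|^2|\grF_b^{\mathbf 1}|^{2u-r}|\calE|^r\,\d\bfalp\ll K_{a,b}(X)^{(2u-r)/(2u)}\bigl(M^{2u(k-1)}I_{a,b+1}(X)\bigr)^{r/(2u)}$, and a subsequent Young step produces $K_{a,b}(X)+M^{2u(k-1)}I_{a,b+1}(X)$, with exponent $2u(k-1)$ rather than $k-1$. There is also no analogue of the paper's disentangling, since $I_{a,b}(X)$ itself never appears with a fractional exponent on the right in your scheme. To obtain the lemma you would need to abandon the fixed block decomposition and argue, as the paper does, on the joint residue-class structure of all $2s$ variables at once.
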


\begin{proof} Consider fixed integers $\xi$ and $\eta$ with $1\le \xi\le p^a$ and $1\le \eta\le p^b$, and a $k$-tuple $\bfsig\in\Sig_k$. Then on considering the underlying Diophantine system, one finds from (\ref{3.4}) that $I_{a,b}^\bfsig(X;\xi,\eta)$ counts the number of integral solutions of the system
\begin{equation}\label{5.1}
\sum_{i=1}^k\sig_i(x_i^j-y_i^j)=\sum_{l=1}^s(v_l^j-w_l^j)\quad (1\le j\le k),
\end{equation}
with
$$1\le \bfx,\bfy,\bfv,\bfw \le X,\quad \bfv\equiv \bfw\equiv \eta\pmod{p^b},$$
and for some $\bfxi,\bfzet \in \Xi_a(\xi)$, with $\bfx,\bfy$ subject to the additional condition
$$\bfx\equiv \bfxi\pmod{p^{a+1}}\quad \text{and}\quad \bfy\equiv \bfzet\pmod{p^{a+1}}.$$
Let $T_1$ denote the number of integral solutions $\bfx,\bfy,\bfv,\bfw$ of the system (\ref{5.1}), counted by $I^\bfsig_{a,b}(X;\xi,\eta)$, in which $v_1,\ldots ,v_s$ and $w_1,\ldots ,w_s$ together lie in at most $k-1$ distinct residue classes modulo $p^{b+1}$, and let $T_2$ denote the corresponding number of solutions in which the integers $v_1,\ldots ,v_s$ and $w_1,\ldots ,w_s$ together contain at least $k$ distinct residue classes modulo $p^{b+1}$. Then we have
$$I_{a,b}^\bfsig(X;\xi,\eta)\le T_1+T_2.$$

\par On considering the underlying Diophantine system, it is apparent that
$$T_1\ll \sum_{\substack{1\le \eta_1,\ldots ,\eta_{k-1}\le p^{b+1}\\ \bfeta\equiv \eta\mmod{p^b}}}\sum_{0\le \bfe\le 2s}\oint |\grF_a^\bfsig(\bfalp;\xi)^2\grf_{b+1}(\bfalp;\eta_1)^{e_1}\ldots \grf_{b+1}(\bfalp;\eta_{k-1})^{e_{k-1}}|\d\bfalp ,$$
in which the summation over $\bfe$ is subject to the condition
$$e_1+e_2+\dots +e_{k-1}=2s.$$
In view of the elementary inequality
$$|z_1\dots z_n|\le |z_1|^n+\dots +|z_n|^n,$$
we find that
$$|\grf_{b+1}(\bfalp;\eta_1)^{e_1}\dots \grf_{b+1}(\bfalp;\eta_{k-1})^{e_{k-1}}|\le \sum_{i=1}^{k-1}|\grf_{b+1}(\bfalp;\eta_i)|^{2s}.$$
Thus we deduce that
\begin{align}
T_1&\ll \sum_{\substack{1\le \eta_1,\ldots ,\eta_{k-1}\le p^{b+1}\\ \bfeta\equiv \eta\mmod{p^b}}}\sum_{i=1}^{k-1}\oint |\grF_a^\bfsig(\bfalp;\xi)^2\grf_{b+1}(\bfalp;\eta_i)^{2s}|\d\bfalp\notag \\
&\ll p^{k-1}\max_{1\le \eta_0\le p^{b+1}}I_{a,b+1}^\bfsig(X;\xi,\eta_0).\label{5.2}
\end{align}

\par We turn our attention next to the solutions $\bfx,\bfy,\bfv,\bfw$ counted by $T_2$. The integers $v_1,\dots ,v_s$ and $w_1,\dots ,w_s$ now lie together in $k$ at least distinct residue classes modulo $p^{b+1}$. By relabelling variables if necessary, therefore, there is no loss of generality in supposing that $v_1,\dots ,v_k$ lie in distinct residue classes modulo $p^{b+1}$. On considering the underlying Diophantine system, we thus deduce that for some $\bftau\in\Sig_k$, one has
$$T_2\ll \oint |\grF_a^\bfsig(\bfalp;\xi)|^2\grF_b^\bftau(\bfalp;\eta)\grf_b(\bfalp;\eta)^{s-r_+}\grf_b(-\bfalp;\eta)^{s-r_-}\d\bfalp .$$
Here, we have written $r_+$ for the number of the coordinates of $\bftau$ which are $+1$, and $r_-$ for the number which are $-1$. Thus, in particular, one has $r_++r_-=k$. On recalling that $s=uk$, an application of H\"older's inequality leads from here to the bound
$$T_2\ll \Bigl( \oint |\grF_a^\bfsig(\bfalp;\xi)^2\grF_b^\bftau(\bfalp;\eta)^{2u}|\d\bfalp \Bigr)^{1/(2u)}\Bigl( \oint |\grF_a^\bfsig(\bfalp;\xi)^2\grf_b(\bfalp;\eta)^{2s}|\d\bfalp \Bigr)^{1-1/(2u)}.$$
Hence, in view of the definitions (\ref{3.4}) and (\ref{3.5}), we arrive at the estimate
\begin{equation}\label{5.3}
T_2\ll (K_{a,b}^{\bfsig,\bftau}(X;\xi,\eta))^{1/(2u)}(I_{a,b}^\bfsig(X;\xi,\eta))^{1-1/(2u)}.
\end{equation}

\par Combining (\ref{5.2}) and (\ref{5.3}), and recalling (\ref{3.6}) and (\ref{3.7}), we deduce that
$$I_{a,b}(X)\ll M^{k-1}I_{a,b+1}(X)+(K_{a,b}(X))^{1/(2u)}(I_{a,b}(X))^{1-1/(2u)}.$$
The conclusion of the lemma now follows on disentangling this inequality.
\end{proof}
 
Repeated application of Lemma \ref{lemma5.1} shows that whenever $a$, $b$ and $H$ are non-negative integers with $b>a\ge 0$, then
\begin{equation}\label{5.4}
I_{a,b}(X)\ll \sum_{h=0}^{H-1}M^{h(k-1)}K_{a,b+h}(X)+M^{H(k-1)}I_{a,b+H}(X).
\end{equation}
Since for large values of $H$, quantities of the type $I_{a,b+H}(X)$ are an irritant to our argument, we show in the next lemma that values of $H$ exceeding $\frac{1}{2}(b-a)$ are harmless.

\begin{lemma}\label{lemma5.2}
Let $a$, $b$ and $H$ be non-negative integers with
$$0<{\textstyle{\frac{1}{2}}}(b-a)\le H\le \tet^{-1}-b.$$
Then one has
$$M^{H(k-1)}I_{a,b+H}(X)\ll M^{-H/2}(X/M^b)^{2s}(X/M^a)^{2k-\frac{1}{2}k(k+1)+\eta_{s+k}}.$$
\end{lemma}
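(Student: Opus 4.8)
The plan is to bound $I_{a,b+H}(X)$ by a pure mean value of the exponential sum $\grf_{b+H}(\bfalp;\cdot)$, apply the trivial bound on the conditioned factor $\grF_a^\bfsig$, and then invoke Lemma \ref{lemma3.1} together with the upper bound (\ref{3.11}) for $\ldbrack J_{s+k}(Y)\rdbrack$. First I would use the definition (\ref{3.4}) and (\ref{3.6}) to write $I_{a,b+H}(X)$ as a maximum over $\xi$, $\eta$ and $\bfsig$ of $\oint|\grF_a^\bfsig(\bfalp;\xi)^2\grf_{b+H}(\bfalp;\eta)^{2s}|\d\bfalp$. Since $\grF_a^\bfsig(\bfalp;\xi)$ is, by (\ref{3.3}) and (\ref{2.2}), a sum over $\bfxi\in\Xi_a(\xi)$ of a product of $k$ exponential sums $\grf_{a+1}(\sig_i\bfalp;\xi_i)$, each of which counts at most $X/M^a$ integers, the trivial estimate gives $|\grF_a^\bfsig(\bfalp;\xi)|\ll (p^{k})^{k}(X/M^a)^k \ll M^{k^2}(X/M^a)^k$ — more precisely one has $\text{card}(\Xi_a(\xi))\le p^k\ll M^k$, so $|\grF_a^\bfsig(\bfalp;\xi)|\ll M^k(X/M^a)^k$, which after taking the square contributes $M^{2k}(X/M^a)^{2k}$. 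It therefore remains to bound $\oint|\grf_{b+H}(\bfalp;\eta)|^{2s}\d\bfalp$.

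Next I would apply Lemma \ref{lemma3.1} with $c=b+H$ and $t=s$, which is legitimate because the hypothesis $H\le\tet^{-1}-b$ gives exactly $(b+H)\tet\le 1$; this yields $\oint|\grf_{b+H}(\bfalp;\eta)|^{2s}\d\bfalp\ll J_s(X/M^{b+H})$. Here I must be slightly careful: I actually want to produce a factor $\eta_{s+k}$ in the exponent rather than $\eta_s$, so instead of estimating $J_s$ directly I would first discard $2s-2(s+k)=-2k$... — rather, I would note that $J_s(Y)\le J_{s+k}(Y)$ fails for the wrong direction, so the cleaner route is: use Hölder on the original integral, or more simply bound $J_s(Y)\ll Y^{2s-\frac12 k(k+1)+\eta_s+\del}$ via (\ref{3.11}) and observe $\eta_s\le\eta_{s+k}$ by the monotonicity remarks preceding Lemma \ref{lemma3.1} (recall $\eta_{s+k}\le\eta_{k(k+1)}$ for $s>k^2$, and in general the argument is arranged so $\eta_s$ may be replaced by $\eta_{s+k}$ in such estimates). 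Thus $J_s(X/M^{b+H})\ll (X/M^{b+H})^{2s-\frac12 k(k+1)+\eta_{s+k}+\del}$, provided $X/M^{b+H}>X^{\del^2}$, which holds since $(b+H)\tet\le 1$ and $\del$ is tiny. Assembling, $M^{H(k-1)}I_{a,b+H}(X)\ll M^{H(k-1)}M^{2k}(X/M^a)^{2k}(X/M^{b+H})^{2s-\frac12 k(k+1)+\eta_{s+k}+\del}$.

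Finally I would collect powers of $M$. Writing $(X/M^{b+H})^{2s}=(X/M^b)^{2s}M^{-2sH}$ and $(X/M^{b+H})^{2k-\frac12 k(k+1)+\eta_{s+k}+\del}=(X/M^a)^{2k-\frac12 k(k+1)+\eta_{s+k}+\del}M^{-(a- (b+H)) \cdot(\cdots)}$... — the bookkeeping is routine but must be done with care — the net power of $M$ multiplying the target quantity $(X/M^b)^{2s}(X/M^a)^{2k-\frac12 k(k+1)+\eta_{s+k}}$ is $H(k-1)+2k-2sH+O((b+H-a)\del)+(\text{lower order})$. Since $s=uk\ge k^2$ and $k\ge 2$, one has $2s-(k-1)\ge 2k^2-k+1$, so $H(k-1)-2sH=-H(2s-k+1)\le -H(2k^2-k+1)$, which for $H\ge 1$ dominates the $+2k$ term and the $\del$-errors by an enormous margin; in particular it is $\le -H/2$ with room to spare (this is where the lower bound $\frac12(b-a)\le H$, forcing $H\ge 1$, is used, together with $\del<(Ns)^{-3N}$ being negligible against $M^{H/2}$). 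Hence $M^{H(k-1)}I_{a,b+H}(X)\ll M^{-H/2}(X/M^b)^{2s}(X/M^a)^{2k-\frac12 k(k+1)+\eta_{s+k}}$, as claimed. The main obstacle is the exponent bookkeeping — ensuring the coefficient of $M$ is genuinely $\le -H/2$ after absorbing the $M^{2k}$ from the trivial bound on $\grF_a^\bfsig$, the $M^{H(k-1)}$ prefactor, and the $\del$-perturbations — but the inequality $s\ge k^2$ gives so much slack that no delicacy is required beyond writing everything in terms of $X/M^b$ and $X/M^a$.
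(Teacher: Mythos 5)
Your proposed route replaces the conditioned factor $\grF_a^\bfsig$ by a trivial pointwise bound and then estimates the remaining pure mean value via Lemma \ref{lemma3.1} with $t=s$, producing $J_s(X/M^{b+H})$. This diverges from the paper's proof, which never bounds $\grF_a^\bfsig$ trivially: the paper first majorises $I^\bfsig_{a,b+H}(X;\xi,\eta)$ by $\oint|\grf_a(\bfalp;\xi)^{2k}\grf_{b+H}(\bfalp;\eta)^{2s}|\d\bfalp$, then applies H\"older with exponents $k/(s+k)$ and $s/(s+k)$ so that \emph{both} resulting integrals have order $2(s+k)$, yielding two copies of $J_{s+k}$ and hence a single exponent $\eta_{s+k}$ throughout. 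Your shortcut breaks for two reasons.

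First, the monotonicity claim is backwards. From the trivial pointwise estimate $|f(\bfalp;X)|\le X$ one sees that $J_{s+1}(X)\le X^2 J_s(X)$, hence $\lam_{s+1}^*\le \lam_s^*+2$ and $\eta_{s+1}\le\eta_s$; that is, $\eta_s$ is \emph{non-increasing}, so $\eta_s\ge\eta_{s+k}$, not $\eta_s\le\eta_{s+k}$. (The remark in \S3 which you cite, namely $\eta_{s+k}\le\eta_{k(k+1)}$ when $s>k^2$, is precisely an instance of $\eta$ being non-increasing and supports the opposite of your claim.) As a consequence, after your bookkeeping the ratio of your upper bound to the asserted one carries a factor $X^{\eta_s-\eta_{s+k}}$, a nonnegative power of $X$ itself. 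Since $M=X^\tet$ with $\tet$ tiny, no negative power of $M$ can absorb a positive power of $X$, and the putative inequality fails whenever $\eta_s>\eta_{s+k}$, which is exactly the situation one must allow (the whole argument is a reductio assuming $\eta_{s+k}>0$ while $\eta_s$ may well be larger).

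Second, even in the optimistic scenario $\eta_s=\eta_{s+k}$, discarding $\grF_a^\bfsig$ trivially loses the essential $s/(s+k)$ weighting on the $\frac{1}{2}k(k+1)$ term. In the paper's computation the quantity to be controlled is $(M^{b-a+H})^{\frac{1}{2}k(k+1)s/(s+k)}M^{-2sH}$, and the extra factor $s/(s+k)$ (equal to $k/(k+1)$ at $s=k^2$) is what makes the final inequality $M^{H(k-1)}\Ups\le M^{-H}$ go through for all $k\ge 2$. With your trivial bound the coefficient becomes $\frac{1}{2}k(k+1)-\eta_{s+k}$, which at $\eta_{s+k}$ small and $s=k^2$ forces an inequality like $\frac{3}{2}k(k+1)\le 2k^2-k+\frac{1}{2}$, i.e.\ $k^2-5k+1\ge 0$, which is false for $k\in\{2,3,4\}$. (Also, a minor point: each $\grf_{a+1}(\sig_i\bfalp;\xi_i)$ has at most $O(X/M^{a+1})$ terms, not $O(X/M^a)$, so the trivial bound is $|\grF_a^\bfsig|\ll (X/M^a)^k$ with no extra $M^k$; but fixing this does not rescue the argument.) The fix is precisely the paper's step: keep $\grf_a^{2k}$ inside the integral, balance it against $\grf_{b+H}^{2s}$ by H\"older with exponents $k/(s+k)$ and $s/(s+k)$, and apply Lemma \ref{lemma3.1} to both pieces so that only $J_{s+k}$ and hence only $\eta_{s+k}$ appears.
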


\begin{proof} On considering the underlying Diophantine systems, it follows from (\ref{3.3}) and (\ref{3.4}) that when $1\le \xi\le p^a$ and $1\le \eta\le p^{b+H}$, and $\bfsig\in\Sig_k$, one has
$$I^\bfsig_{a,b+H}(X;\xi,\eta)\le \oint |\grf_a(\bfalp;\xi)^{2k}\grf_{b+H}(\bfalp;\eta)^{2s}|\d\bfalp .$$
Then an application of H\"older's inequality in combination with Lemma \ref{lemma3.1} leads to the upper bound
\begin{align*}
I^\bfsig_{a,b+H}(X;\xi,\eta)&\le \Bigl( \oint |\grf_a(\bfalp;\xi)|^{2s+2k}\d\bfalp \Bigr)^{k/(s+k)} \Bigl( \oint |\grf_{b+H}(\bfalp;\eta)|^{2s+2k}\d\bfalp \Bigr)^{s/(s+k)}\\
&\ll (J_{s+k}(X/M^a))^{k/(s+k)}(J_{s+k}(X/M^{b+H}))^{s/(s+k)}.
\end{align*}
Consequently, in view of (\ref{3.2}), we have
\begin{align}
I_{a,b+H}(X)&\ll ((X/M^a)^{k/(s+k)}(X/M^{b+H})^{s/(s+k)})^{2s+2k-\frac{1}{2}k(k+1)+\eta_{s+k}+\del}\notag \\
&\ll X^\del (X/M^a)^{2k-\frac{1}{2}k(k+1)+\eta_{s+k}}(X/M^b)^{2s}\Ups,\label{5.5}
\end{align}
where
$$\Ups=(M^{b-a+H})^{\frac{1}{2}k(k+1)s/(s+k)}M^{-2sH}.$$
But when $s\ge k^2$ and $H\ge \frac{1}{2}(b-a)$, one has
\begin{align*}
\frac{s}{s+k}\left(2(s+k)-{\textstyle \frac{1}{2}}k(k+1)\right)H&\ge \frac{s}{s+k}\left({\textstyle \frac{3}{2}}k(k+1)\right)H\\
&\ge \frac{s}{s+k}\left({\textstyle\frac{1}{2}}k(k+1)\right)(b-a)+{\textstyle\frac{1}{2}}k^2H.
\end{align*}
Thus we see that for $k\ge 2$, one has
$$M^{H(k-1)}\Ups\le M^{H(k-1-\frac{1}{2}k^2)}\le M^{-H},$$
whence
$$X^\del M^{H(k-1)}\Ups \le M^{-H/2}.$$
The conclusion of the lemma follows on substituting this estimate into (\ref{5.5}).
\end{proof}

Combining Lemma \ref{lemma5.2} with the upper bound (\ref{5.4}), we may conclude as follows. Here, as usual, when $\bet\in \dbR$ we write $\lceil \bet\rceil $ for the least integer no smaller than $\bet$.

\begin{lemma}\label{lemma5.3}
Let $a$ and $b$ be integers with $0\le a<b$, and put $H=\lceil \frac{1}{2}(b-a)\rceil$. Suppose that $b+H\le \tet^{-1}$. Then there exists an integer $h$ with $0\le h<H$ having the property that
$$I_{a,b}(X)\ll M^{h(k-1)}K_{a,b+h}(X)+M^{-H/2}(X/M^b)^{2s}(X/M^a)^{2k-\frac{1}{2}k(k+1)+\eta_{s+k}}.$$
\end{lemma}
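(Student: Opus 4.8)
The plan is to combine the iterated conditioning inequality~(\ref{5.4}) with the tail estimate of Lemma~\ref{lemma5.2}, applied with the specific choice $H=\lceil\frac{1}{2}(b-a)\rceil$. First I would note that with this choice of $H$ the hypotheses of Lemma~\ref{lemma5.2} are met: since $b>a$ we have $\frac{1}{2}(b-a)\le H$, so $H\ge 1>0$; and the standing assumption $b+H\le\tet^{-1}$ translates into $H\le\tet^{-1}-b$, exactly the upper constraint required. Thus Lemma~\ref{lemma5.2} is applicable and yields
$$M^{H(k-1)}I_{a,b+H}(X)\ll M^{-H/2}(X/M^b)^{2s}(X/M^a)^{2k-\frac{1}{2}k(k+1)+\eta_{s+k}}.$$

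Next I would invoke~(\ref{5.4}) with this same value of $H$, obtaining
$$I_{a,b}(X)\ll\sum_{h=0}^{H-1}M^{h(k-1)}K_{a,b+h}(X)+M^{H(k-1)}I_{a,b+H}(X),$$
and substitute the tail bound just derived for the last term. This produces
$$I_{a,b}(X)\ll\sum_{h=0}^{H-1}M^{h(k-1)}K_{a,b+h}(X)+M^{-H/2}(X/M^b)^{2s}(X/M^a)^{2k-\frac{1}{2}k(k+1)+\eta_{s+k}}.$$
To reach the stated conclusion, which has a single term $M^{h(k-1)}K_{a,b+h}(X)$ rather than a sum, I would simply pass to the largest summand: there are $H$ terms in the sum, $H$ is $O(\tet^{-1})$ and hence $O(1)$ in the relevant sense (or one can absorb the factor $H$ into the implied constant since $H\le\tet^{-1}$ is bounded in terms of $s$ and $k$), so there exists an integer $h$ with $0\le h<H$ for which the $h$-th term dominates the sum up to a constant. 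Relabelling, this gives the asserted inequality.

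The only point demanding a little care is the treatment of the factor $H$ when replacing the sum by its maximal term. Since $H=\lceil\frac{1}{2}(b-a)\rceil\le\lceil\frac{1}{2}\tet^{-1}\rceil$ and $\tet=\frac{1}{2}(k/s)^{N+1}$ depends only on $s$, $k$ and $N$, the quantity $H$ is bounded in terms of these fixed parameters, so the factor $H$ is harmless and may be swallowed by the Vinogradov constant. I do not anticipate any genuine obstacle here; the lemma is a bookkeeping consequence of~(\ref{5.4}) and Lemma~\ref{lemma5.2}, with the mild verification that the chosen $H$ respects the hypothesis $\frac{1}{2}(b-a)\le H\le\tet^{-1}-b$ of Lemma~\ref{lemma5.2} being the one thing worth stating explicitly.
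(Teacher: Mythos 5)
Your proof is correct and is precisely the argument the paper intends: the paper states Lemma~\ref{lemma5.3} as an immediate combination of (\ref{5.4}) and Lemma~\ref{lemma5.2}, and you have filled in exactly those steps, including the harmless passage to the dominant summand using that $H\le\tet^{-1}$ is bounded in terms of $s$ and $k$.
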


By making use of the special case of Lemma \ref{lemma5.3} in which $a=0$ and $b=1$, we are able to refine Lemma \ref{lemma3.2} into a form more directly applicable.

\begin{lemma}\label{lemma5.4}
One has $J_{s+k}(X)\ll M^{2s}K_{0,1}(X)$.
\end{lemma}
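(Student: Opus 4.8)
The plan is to combine Lemma \ref{lemma3.2} with the conditioning apparatus built in Lemma \ref{lemma5.3}, applied in the base case $a=0$, $b=1$. By Lemma \ref{lemma3.2} we have fixed a prime $p$ with $M<p\le 2M$ for which $J_{s+k}(X)\ll M^{2s}I_{0,1}(X)$, so it suffices to bound $I_{0,1}(X)$ in terms of $K_{0,1}(X)$ (up to acceptable error). First I would invoke Lemma \ref{lemma5.3} with $a=0$ and $b=1$, so that $H=\lceil\tfrac12\rceil=1$; the hypothesis $b+H=2\le\tet^{-1}$ is satisfied since $N$ is large and $\tet=\tfrac12(k/s)^{N+1}$ is tiny. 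The lemma then produces an integer $h$ with $0\le h<1$, forcing $h=0$, and yields
$$I_{0,1}(X)\ll K_{0,1}(X)+M^{-1/2}(X/M)^{2s}X^{2k-\frac{1}{2}k(k+1)+\eta_{s+k}}.$$

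Next I would argue that the second term on the right is dominated by $K_{0,1}(X)$, or at any rate is negligible compared with the left-hand side after feeding back the lower bound for $J_{s+k}(X)$. The cleanest route: combine the displayed bound with $J_{s+k}(X)\ll M^{2s}I_{0,1}(X)$ to get
$$J_{s+k}(X)\ll M^{2s}K_{0,1}(X)+M^{-1/2}X^{2s-\frac{1}{2}k(k+1)+\eta_{s+k}}M^{2s-2s}\cdot\text{(tidy up)},$$
and then observe that, since $J_{s+k}(X)\gg X^{\lam_{s+k}^*-\del}=X^{2(s+k)-\frac12 k(k+1)+\eta_{s+k}-\del}$ by (\ref{3.1})--(\ref{2.02}), while $M=X^\tet$ with $\tet$ fixed and positive, the error term $M^{-1/2}X^{2s-\frac12 k(k+1)+\eta_{s+k}}$ is smaller than $X^{-\tet/3}J_{s+k}(X)$, say, for $X$ large. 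Hence it can be absorbed into the left-hand side, leaving $J_{s+k}(X)\ll M^{2s}K_{0,1}(X)$ as claimed.

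The only point requiring genuine care is the bookkeeping in that absorption step: one must check that the exponent of $X$ in the error term is strictly smaller, by a fixed positive amount, than the exponent $2(s+k)-\frac12 k(k+1)+\eta_{s+k}$ appearing in the lower bound for $J_{s+k}(X)$. Writing out both exponents, the difference is $2k+M^{-1/2}$-worth, i.e. the error carries an extra factor $M^{-1/2}X^{-2k}$ relative to $J_{s+k}(X)$ up to the harmless $X^\del$ slack; since $\del<\tet$ this margin is comfortably positive. I expect this to be entirely routine — the substantive work has already been done in Lemmas \ref{lemma3.2}, \ref{lemma5.1} and \ref{lemma5.2} — so the proof is essentially the remark ``apply Lemma \ref{lemma5.3} with $(a,b)=(0,1)$, then substitute into Lemma \ref{lemma3.2} and absorb the error term using (\ref{3.1})''. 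The main obstacle, such as it is, is simply presenting the absorption cleanly without re-deriving the lower bound machinery.
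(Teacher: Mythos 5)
Your proposal is correct and is essentially identical to the paper's own proof: apply Lemma \ref{lemma5.3} with $(a,b)=(0,1)$ (so $H=1$, forcing $h=0$), substitute into Lemma \ref{lemma3.2}, and absorb the secondary term by comparing against the lower bound $J_{s+k}(X)\gg X^{2(s+k)-\frac12 k(k+1)+\eta_{s+k}-\del}$ from (\ref{3.1}) together with $M^{1/2}>X^{4\del}$. One small arithmetic slip in your bookkeeping: after multiplying by $M^{2s}$ the secondary term is $M^{-1/2}X^{2s+2k-\frac12 k(k+1)+\eta_{s+k}}$ (you dropped the $2k$ in the exponent), so the margin over $J_{s+k}(X)$ comes solely from $M^{-1/2}X^{\del}\le X^{-\del}$, not from any extra power $X^{-2k}$ — but this makes no difference to the conclusion since $\del$ is tiny compared with $\tet$.
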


\begin{proof} Observe first that when $a=0$ and $b=1$, then $\lceil \frac{1}{2}(b-a)\rceil =1$. Thus we deduce from Lemma \ref{lemma5.3} that
$$I_{0,1}(X)\ll K_{0,1}(X)+M^{-1/2}(X/M)^{2s}X^{2k-\frac{1}{2}k(k+1)+\eta_{s+k}}.$$
Since we may suppose that $M^{1/2}>X^{4\del}$, it follows from Lemma \ref{lemma3.2} that
$$J_{s+k}(X)\ll M^{2s}I_{0,1}(X)\ll M^{2s}K_{0,1}(X)+X^{2s+2k-\frac{1}{2}k(k+1)+\eta_{s+k}-2\del}.$$
But in view of (\ref{3.10}), we have
$$J_{s+k}(X)\gg X^{2s+2k-\frac{1}{2}k(k+1)+\eta_{s+k}-\del},$$
and hence we arrive at the upper bound
$$J_{s+k}(X)\ll M^{2s}K_{0,1}(X)+X^{-\del}J_{s+k}(X).$$
The conclusion of the lemma follows on disentangling this inequality.
\end{proof}

\section{The efficient congruencing step} The mean value $K_{a,b}(X)$ contains a powerful latent congruence condition. Our task in this section is to convert this condition into one that may be exploited by means of an iterative procedure.

\begin{lemma}\label{lemma6.1}
Suppose that $a$ and $b$ are integers with $0\le a<b\le \tet^{-1}$. Then one has
$$K_{a,b}(X)\ll M^{\frac{1}{2}k(k-1)(b+a)}(M^{kb-a})^k\left( J_{s+k}(X/M^b)\right)^{1-k/s}\left( I_{b,kb}(X)\right)^{k/s}.$$
\end{lemma}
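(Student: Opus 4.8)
The plan is to start from the definition (\ref{3.5}) of $K_{a,b}^{\bfsig,\bftau}(X;\xi,\eta)$ and to apply Hölder's inequality so as to separate the ``diagonal'' contribution of the conditioned factor $\grF_b^\bftau$ from a mixed mean value suited to an application of the efficient congruencing mechanism. First I would interpret $K_{a,b}^{\bfsig,\bftau}(X;\xi,\eta)$ via orthogonality as counting integral solutions of a Diophantine system in the variables encoded by $\grF_a^\bfsig(\bfalp;\xi)$ (a tuple of $2k$ well-conditioned variables in residue classes modulo $p^{a+1}$) and those encoded by $\grF_b^\bftau(\bfalp;\eta)^{2u}$ (some $2ku=2s$ variables, arranged in $2u$ well-conditioned $k$-tuples modulo $p^{b+1}$, all lying in the class $\eta$ modulo $p^b$). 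Writing $\grF_b^\bftau(\bfalp;\eta)^{2u}$ as a sum over choices of the underlying $k$-tuples $\bfeta^{(l)}\in\Xi_b(\eta)$ of products of $\grf_{b+1}(\pm\bfalp;\cdot)$, I would apply Hölder in the form
$$\oint |\grF_a^\bfsig(\bfalp;\xi)^2\grF_b^\bftau(\bfalp;\eta)^{2u}|\d\bfalp \le \Bigl(\oint |\grF_b^\bftau(\bfalp;\eta)|^{2u+\cdots}\Bigr)^{\theta}\Bigl(\oint |\grF_a^\bfsig(\bfalp;\xi)^2\grf_b(\bfalp;\eta)^{2s}|\cdots\Bigr)^{1-\theta},$$
but more to the point, I would split the $2u$ conditioned blocks against the $\grF_a^\bfsig$ factor using exponents $1-k/s$ and $k/s$ — the first piece reducing (after a binomial-theorem change of variables, exactly as in the treatment of the first integral on the right of (\ref{2.13a})) to $J_{s+k}(X/M^b)$, and the second piece retaining a single conditioned block $\grF_b^\bftau$ together with $\grF_a^\bfsig$.

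The heart of the argument is then the efficient congruencing step applied to that second piece. Here I would exploit the fact that, in the Diophantine system underlying $\oint|\grF_a^\bfsig(\bfalp;\xi)^2\grf_b(\bfalp;\eta)^{2s}|\d\bfalp$, the validity of the equations forces, via the Binomial Theorem exactly as in the passage from (\ref{2.14}) to (\ref{2.15}), congruence conditions $\sum_i\sig_i(x_i-\eta)^j\equiv \sum_i\sig_i(y_i-\eta)^j\pmod{p^{jb}}$ on the $2k$ variables counted by $\grF_a^\bfsig$. This is precisely the situation governed by Lemma \ref{lemma4.1}: the number of solutions of the congruence system $\sum_{i=1}^k\sig_i(z_i-\eta)^j\equiv m_j\pmod{p^{jb}}$ with the $z_i$ running over a class in $\Xi_a(\xi)$ and lifted to modulus $p^{kb}$ is at most $k!\,p^{\frac{1}{2}k(k-1)(a+b)}$. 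I would use this to convert the well-conditioned variables into variables lying in fixed residue classes modulo $p^{kb}$, at the cost of the combinatorial factor $M^{\frac{1}{2}k(k-1)(a+b)}$ (up to the harmless $k!$), and an extra factor of the shape $(M^{kb-a})^k$ accounting for the number of choices of the lifted data $\bfm\pmod{p^{kb}}$ compatible with a fixed residue $\pmod{p^{jb}}$ in each coordinate — together with the number of choices of the base point $\bfxi\in\Xi_a(\xi)$. After this substitution the residual mean value is bounded by $I_{b,kb}(X)$, since the $2s$ variables from the $\grf_b(\bfalp;\eta)^{2s}$ block, now pushed into classes modulo $p^{kb}$, are encoded by $\grf_{kb}$, and the surviving $\grF_a^\bfsig$-block together with the conditioning to modulus $p^{kb}$ yields exactly the structure defining $I_{b,kb}(X)$ after relabelling $(a,b)\mapsto(b,kb)$. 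Collecting the two Hölder pieces and taking the maxima over $\xi,\eta,\bfsig,\bftau$ (invoking (\ref{3.6}), (\ref{3.7})) gives the claimed bound.

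The main obstacle I anticipate is bookkeeping the powers of $M$ correctly through the lifting argument: one must be careful that the factor arising from Lemma \ref{lemma4.1} is $M^{\frac{1}{2}k(k-1)(a+b)}$ (not $(a+b)$ replaced by, say, $2b$), that the ``choice of $\bfm$'' factor genuinely contributes $(M^{kb-a})^k$ rather than some other power, and that the conditions $a<b\le\tet^{-1}$ are exactly what is needed to guarantee $kb\tet\le 1$ so that Lemma \ref{lemma3.1}-type estimates and the definition of $I_{b,kb}(X)$ make sense (the exponential sums $\grf_{kb}$ must still involve genuinely many integer variables, i.e. $X/M^{kb}$ must not degenerate). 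A secondary point requiring care is the Hölder exponent $k/s$: one needs $s\ge k$ for this to be admissible, which holds since $s=uk$ with $u\ge k$, and one must check that splitting $2u$ conditioned blocks as $2(u-1)+2$ against the weights $1-k/s$ and $k/s$ is dimensionally consistent — this is where the relation $s=uk$ is used, matching the structure $\grF_b^\bftau(\bfalp;\eta)^{2u}$ to the exponent $2s$ of $\grf_b(\bfalp;\eta)^{2s}$ in the definition of $I_{a,b}$.
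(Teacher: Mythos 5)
Your high-level ingredient list is right (orthogonality, Binomial theorem applied after shifting by $\eta$, Lemma~\ref{lemma4.1}, H\"older), but the assembly has several genuine gaps. The most serious is the order of operations and the resulting misidentification of which variables end up at modulus $p^{kb}$. You propose applying H\"older first to peel off $J_{s+k}(X/M^b)$ and then congruencing the ``second piece''; but if you H\"older $\oint |\grF_a^\bfsig{}^2 \grF_b^\bftau{}^{2u}|\,\d\bfalp$ with weights $1-k/s$ and $k/s$, the second factor must carry $\grF_a^\bfsig$ to the power $2s/k$, giving $2s$ (not $2k$) well-conditioned variables subject to a single congruence per $j$; Lemma~\ref{lemma4.1}, which handles exactly $k$-tuples against $k$-tuples, does not apply to that configuration. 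The paper avoids this by congruencing \emph{first}: the congruences (\ref{6.5}) act on the $2k$ variables of $\grF_a^\bfsig{}^2$ alone, and Lemma~\ref{lemma4.1} plus Cauchy converts $\grF_a^\bfsig{}^2$ into a sum of $\prod_i |\grf_{kb}(\bfalp;\zet_i)|^2$; a separate H\"older diagonalises this $k$-fold product (costing $(M^{kb-a})^{k-1}$, with one more $M^{kb-a}$ from replacing the $\zet$-sum by a max), and only then does the final H\"older with exponents $1-k/s$, $k/s$ appear. Your attribution of the factor $(M^{kb-a})^k$ to ``choices of $\bfm$ and the base point $\bfxi$'' is therefore wrong: the $\bfm$-lifting count is the $p^{\frac12 k(k-1)b}$ piece already inside Lemma~\ref{lemma4.1}, and $\Xi_a(\xi)$ contributes $O(p^k)$, not $(p^{kb-a})^k$.

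The same confusion propagates to your identification of $I_{b,kb}(X)$. You write that the $2s$ variables from $\grf_b(\bfalp;\eta)^{2s}$ get pushed to classes mod $p^{kb}$ and become $\grf_{kb}^{2s}$, while ``the surviving $\grF_a^\bfsig$-block'' provides the conditioned factor. This is reversed. It is the $\grF_a^\bfsig{}^2$ variables (only $2k$ of them) that the congruences (\ref{6.5}) push to modulus $p^{kb}$; they become $\grf_{kb}^{2k}$, and the final H\"older amplifies $\grf_{kb}^{2k}\grF_b^\bftau{}^{2u}$ to $U_1^{1-k/s}U_2^{k/s}$ with $U_2=\oint|\grF_b^\bftau{}^2\grf_{kb}^{2s}|\,\d\bfalp = I_{b,kb}^\bftau(X;\eta,\zet)$. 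The conditioned block $\grF_b^\bftau{}^2$ in $I_{b,kb}$ descends from the $\grF_b^\bftau{}^{2u}$ already present in $K_{a,b}$, and it is essential to keep that conditioning intact through the congruencing step (your working with $\grf_b^{2s}$ in place of $\grF_b^\bftau{}^{2u}$ would lose it, and the output would fail to have the form needed to continue the iteration). Finally, a small point: the hypothesis $b\le\tet^{-1}$ gives $b\tet\le 1$ (so that $J_{s+k}(X/M^b)$ is meaningful via Lemma~\ref{lemma3.1}), not $kb\tet\le 1$; the tighter restriction $b\le\frac{2}{3}(k\tet)^{-1}$ does not appear until Lemma~\ref{lemma6.2}.
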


\begin{proof} Consider fixed integers $\xi$ and $\eta$ with $1\le \xi\le p^a$ and $1\le \eta\le p^b$, and $k$-tuples $\bfsig,\bftau\in\Sig_k$. Then on considering the underlying Diophantine system, one finds from (\ref{3.5}) that $K_{a,b}^{\bfsig,\bftau}(X;\xi,\eta)$ counts the number of integral solutions of the system
\begin{equation}\label{6.3}
\sum_{i=1}^k\sig_i(x_i^j-y_i^j)=\sum_{l=1}^u\sum_{m=1}^k\tau_m(v_{lm}^j-w_{lm}^j)\quad (1\le j\le k),
\end{equation}
in which, for some $\bfxi,\bfzet \in \Xi_a(\xi)$, one has
$$1\le \bfx,\bfy\le X,\quad \bfx\equiv \bfxi\pmod{p^{a+1}}\quad\text{and}\quad  \bfy\equiv \bfzet\pmod{p^{a+1}},$$
and for $1\le l\le u$, for some $\bfeta_l,\bfnu_l\in \Xi_b(\eta)$, one has
$$1\le \bfv_l,\bfw_l\le X,\quad \bfv_l\equiv \bfeta_l\pmod{p^{b+1}}\quad \text{and}\quad \bfw_l\equiv \bfnu_l\pmod{p^{b+1}}.$$
By applying the Binomial Theorem, we see that the system (\ref{6.3}) is equivalent to the new system of equations
\begin{equation}\label{6.4}
\sum_{i=1}^k\sig_i((x_i-\eta)^j-(y_i-\eta)^j)=\sum_{l=1}^u\sum_{m=1}^k\tau_m((v_{lm}-\eta)^j-(w_{lm}-\eta)^j)\quad (1\le j\le k).
\end{equation}
But in any solution $\bfx,\bfy,\bfv,\bfw$ counted by $K^{\bfsig,\bftau}_{a,b}(X;\xi,\eta)$, one has $\bfv\equiv \bfw\equiv \eta\pmod{p^b}$. We therefore deduce from (\ref{6.4}) that
\begin{equation}\label{6.5}
\sum_{i=1}^k\sig_i(x_i-\eta)^j\equiv \sum_{i=1}^k\sig_i(y_i-\eta)^j\pmod{p^{jb}}\quad (1\le j\le k).
\end{equation}

\par Recall the notation from the preamble to Lemma \ref{lemma4.1}, and write
$$\grG_{a,b}^\bfsig(\bfalp;\xi,\eta;\bfm)=\sum_{\bfzet \in \calB^\bfsig_{a,b}(\bfm;\xi,\eta)}\prod_{i=1}^k\grf_{kb}(\sig_i\bfalp;\zet_i).$$
Then on considering the underlying Diophantine system, it follows from (\ref{6.3}) and (\ref{6.5}) that
\begin{equation}\label{6.6}
K_{a,b}^{\bfsig,\bftau}(X;\xi,\eta)=\sum_{m_1=1}^{p^b}\dots \sum_{m_k=1}^{p^{kb}}\oint |\grG_{a,b}^\bfsig(\bfalp;\xi,\eta;\bfm)^2\grF_b^\bftau(\bfalp;\eta)^{2u}|\d\bfalp .
\end{equation}
An application of Cauchy's inequality in combination with Lemma \ref{lemma4.1} yields the upper bound
\begin{align}
|\grG_{a,b}^\bfsig(\bfalp;\xi,\eta;\bfm)|^2&\le \text{card}(\calB_{a,b}^\bfsig(\bfm;\xi,\eta))\sum_{\bfzet\in \calB^\bfsig_{a,b}(\bfm;\xi,\eta)}\prod_{i=1}^k|\grf_{kb}(\bfalp;\zet_i)|^2\notag \\
&\ll M^{\frac{1}{2}k(k-1)(a+b)}\sum_{\bfzet \in \calB^\bfsig_{a,b}(\bfm;\xi,\eta)}\prod_{i=1}^k|\grf_{kb}(\bfalp;\zet_i)|^2.\label{6.7}
\end{align}
Next, on substituting (\ref{6.7}) into (\ref{6.6}) and considering the underlying Diophantine system, we deduce that
\begin{equation}\label{6.8}
K_{a,b}^{\bfsig,\bftau}(X;\xi,\eta)\ll M^{\frac{1}{2}k(k-1)(a+b)}\sum_{\substack{1\le \bfzet\le p^{kb}\\ \bfzet \equiv \xi\mmod{p^a}}}\oint \Bigl( \prod_{i=1}^k|\grf_{kb}(\bfalp;\zet_i)|^2\Bigr) |\grF_b^\bftau(\bfalp;\eta)|^{2u}\d\bfalp .
\end{equation}

\par Observe next that by H\"older's inequality, one has
\begin{align*}
\sum_{\substack{1\le \bfzet\le p^{kb}\\ \bfzet \equiv \xi\mmod{p^a}}}\prod_{i=1}^k|\grf_{kb}(\bfalp;\zet_i)|^2&=\Bigl( \sum_{\substack{1\le \zet\le p^{kb}\\ \zet\equiv \xi\mmod{p^a}}}|\grf_{kb}(\bfalp;\zet)|^2\Bigr)^k\\
&\le (p^{kb-a})^{k-1}\sum_{\substack{1\le \zet\le p^{kb}\\ \zet\equiv \xi\mmod{p^a}}}|\grf_{kb}(\bfalp;\zet)|^{2k}.
\end{align*}
Then it follows from (\ref{6.8}) that
\begin{equation}\label{6.9}
K_{a,b}^{\bfsig,\bftau}(X;\xi,\eta)\ll M^{\frac{1}{2}k(k-1)(a+b)}(M^{kb-a})^k\max_{1\le \zet\le p^{kb}}\oint |\grf_{kb}(\bfalp;\zet)^{2k}\grF_b^\bftau(\bfalp;\eta)^{2u}|\d\bfalp .
\end{equation}
On recalling that $s=uk$, an application of H\"older's inequality supplies the bound
\begin{equation}\label{6.9c}
\oint |\grf_{kb}(\bfalp;\zet)^{2k}\grF_b^\bftau(\bfalp;\eta)^{2u}|\d\bfalp \le U_1^{1-k/s}U_2^{k/s},
\end{equation}
where
$$U_1=\oint |\grF_b^\bftau(\bfalp;\eta)|^{2u+2}\d\bfalp$$
and
$$U_2=\oint |\grF_b^\bftau(\bfalp;\eta)^2\grf_{kb}(\bfalp;\zet)^{2s}|\d\bfalp .$$
On considering the underlying Diophantine system, it follows from Lemma \ref{lemma3.1} that
$$U_1\le \oint |\grf_b(\bfalp;\eta)|^{2s+2k}\d\bfalp \ll J_{s+k}(X/M^b).$$
Thus, on recalling the definition (\ref{3.4}), we find that
\begin{align*}
\oint |\grf_{kb}(\bfalp;\zet)^{2k}\grF_b^\bftau(\bfalp;\eta)^{2u}|\d\bfalp &\ll (J_{s+k}(X/M^b))^{1-k/s}(I_{b,kb}^\bftau(X;\eta,\zet))^{k/s}\notag \\
&\ll (J_{s+k}(X/M^b))^{1-k/s}(I_{b,kb}(X))^{k/s}.
\end{align*}
Finally, on substituting the latter estimate into (\ref{6.9}), the conclusion of the lemma is immediate.
\end{proof}

Before proceeding further, we pause to extract a crude but simple bound for $K_{a,b}(X)$ of value when $b$ is large.

\begin{lemma}\label{lemma6.1b}
Suppose that $a$ and $b$ are integers with $0\le a<b\le \tet^{-1}$. Then
$$\ldbrack K_{a,b}(X)\rdbrack \ll X^{\eta_{s+k}+\del}(M^{b-a})^{\frac{1}{2}k(k+1)}.$$
\end{lemma}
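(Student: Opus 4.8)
The plan is to decouple the two conditioned exponential sums in $K_{a,b}^{\bfsig,\bftau}(X;\xi,\eta)$ by H\"older's inequality and then estimate each resulting moment crudely, but without loss in the length parameter. Fix $\bfsig,\bftau\in\Sig_k$ and integers $\xi,\eta$ with $1\le\xi\le p^a$ and $1\le\eta\le p^b$. Since $s=uk$, one has $2(u+1)=2(s+k)/k$, and I would write $|\grF_a^\bfsig(\bfalp;\xi)|^2=\bigl(|\grF_a^\bfsig(\bfalp;\xi)|^{2(u+1)}\bigr)^{1/(u+1)}$ and $|\grF_b^\bftau(\bfalp;\eta)|^{2u}=\bigl(|\grF_b^\bftau(\bfalp;\eta)|^{2(u+1)}\bigr)^{u/(u+1)}$, then apply H\"older's inequality with exponents $u+1$ and $(u+1)/u$ to the defining integral (\ref{3.5}) to obtain
$$K_{a,b}^{\bfsig,\bftau}(X;\xi,\eta)\le\Bigl(\oint|\grF_a^\bfsig(\bfalp;\xi)|^{2(u+1)}\d\bfalp\Bigr)^{k/(s+k)}\Bigl(\oint|\grF_b^\bftau(\bfalp;\eta)|^{2(u+1)}\d\bfalp\Bigr)^{s/(s+k)}.$$

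The crucial step is the bound $\oint|\grF_c^\bfsig(\bfalp;\xi)|^{2(u+1)}\d\bfalp\ll J_{s+k}(X/M^c)$, valid for $c=a$ or $c=b$ and any $\bfsig\in\Sig_k$, $1\le\xi\le p^c$. On considering the underlying Diophantine system and recalling (\ref{3.3}), the left-hand side counts tuples of $2(s+k)$ integers lying in $[1,X]$, all congruent to $\xi$ modulo $p^c$, and satisfying a Vinogradov system of degree $k$ with exactly $s+k$ variables on each side, subject additionally only to the constraint that certain $k$-element blocks of these variables occupy distinct residue classes modulo $p^{c+1}$. Writing each variable in the form $p^c y+\xi$ and invoking the Binomial Theorem --- there being equally many variables on each side --- the system becomes the Vinogradov system of degree $k$ in $s+k$ variables per side for the $y$'s, with $|y|\le X/p^c\ll X/M^c$; discarding the residue constraints and translating to a range of non-negative integers then bounds the count by $\ll J_{s+k}(X/M^c)$. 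It is essential here not to split the sum over $\Xi_c(\xi)$ in (\ref{3.3}) by Cauchy--Schwarz, as in (\ref{6.7}): that would cost a factor of size roughly $M^{2(s+k)}$ and destroy the estimate.

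Combining, $K_{a,b}^{\bfsig,\bftau}(X;\xi,\eta)\ll J_{s+k}(X/M^a)^{k/(s+k)}J_{s+k}(X/M^b)^{s/(s+k)}$. I would then invoke (\ref{3.2}) with $Y=X/M^a$ and $Y=X/M^b$, each lying in $[1,X]$ by the hypothesis $a<b\le\tet^{-1}$; should either be as small as $X^{\del^2}$ or smaller, the trivial bound $J_{s+k}(Y)\le Y^{2(s+k)}\le X^{\del}\ll X^\del Y^{\lam_{s+k}^*+\del}$ applies instead, so at worst an extra factor $X^\del$ is incurred. This gives $K_{a,b}(X)\ll X^{\lam_{s+k}^*+2\del}M^{-(ak+bs)(\lam_{s+k}^*+\del)/(s+k)}$. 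Dividing by $(X/M^b)^{2s}(X/M^a)^{2k-\frac12 k(k+1)}$ as in (\ref{3.9}) and using $\lam_{s+k}^*=2(s+k)-\frac12 k(k+1)+\eta_{s+k}$, the power of $X$ collapses to $X^{\eta_{s+k}+2\del}$, while a short algebraic simplification shows the power of $M$ equals
$$\tfrac12 k(k+1)(b-a)-\frac{\tfrac12 k^2(k+1)(b-a)+(ak+bs)(\eta_{s+k}+\del)}{s+k}.$$
Since $b>a$, the subtracted term is positive --- at least $\tfrac12 k^2(k+1)/(s+k)$ --- so the exponent of $M$ is strictly less than $\tfrac12 k(k+1)(b-a)$; moreover $M=X^\tet$ with $\tet$ so large relative to $\del$ (in the notation of \S3) that the saving $M^{-\frac12 k^2(k+1)(b-a)/(s+k)}$ more than absorbs the spurious $X^\del$. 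Taking the maximum over $\bfsig,\bftau,\xi,\eta$ yields $\ldbrack K_{a,b}(X)\rdbrack\ll X^{\eta_{s+k}+\del}(M^{b-a})^{\frac12 k(k+1)}$, as required.

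I expect the main obstacle to be the key bound for $\oint|\grF_c^\bfsig(\bfalp;\xi)|^{2(u+1)}\d\bfalp$: one must recognise this moment as genuinely of the size of $J_{s+k}(X/M^c)$, with no extraneous powers of $M$, which forces one to read it off directly --- via the translation--dilation $x=p^c y+\xi$ --- as counting a sub-family of the solutions of the shorter Vinogradov system, rather than processing the conditioned sum $\grF_c^\bfsig$ through Cauchy--Schwarz. The remaining steps are routine bookkeeping with exponents.
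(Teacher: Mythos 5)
Your proof is correct and follows essentially the same route as the paper: the same H\"older split with exponents $k/(s+k)$ and $s/(s+k)$, the same bounds $J_{s+k}(X/M^a)$ and $J_{s+k}(X/M^b)$, and the same exponent bookkeeping. The only cosmetic difference is that the paper drops the conditioning first (bounding $|\grF_a^\bfsig|^2|\grF_b^\bftau|^{2u}$ by $|\grf_a|^{2k}|\grf_b|^{2s}$ on considering the underlying Diophantine system) and then applies H\"older's inequality, so that the resulting pure moments $\oint|\grf_c(\bfalp;\xi)|^{2(s+k)}\d\bfalp$ fall directly under Lemma \ref{lemma3.1}, whereas you apply H\"older first to the conditioned sums $\grF_a^\bfsig$, $\grF_b^\bftau$ and then discard the conditioning when interpreting $\oint|\grF_c^\bfsig|^{2(u+1)}\d\bfalp$ as a Vinogradov count --- both readings lead to the identical bound.
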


\begin{proof} Consider fixed integers $\xi$ and $\eta$ with $1\le \xi\le p^a$ and $1\le \eta\le p^b$, and $k$-tuples $\bfsig,\bftau\in \Sig_k$. On considering the underlying Diophantine system and applying H\"older's inequality, we deduce from (\ref{3.5}) that
\begin{align*}
K_{a,b}^{\bfsig,\bftau}(X;\xi,\eta)&\le \oint |\grf_a(\bfalp;\xi)^{2k}\grf_b(\bfalp;\eta)^{2s}|\d \bfalp \\
&\le \Bigl( \oint |\grf_a(\bfalp;\xi)|^{2s+2k}\d\bfalp \Bigr)^{k/(s+k)}\Bigl( \oint |\grf_b(\bfalp;\eta)|^{2s+2k}\d\bfalp \Bigr)^{s/(s+k)}.
\end{align*}
In view of the hypothesis $b\le \tet^{-1}$, we therefore deduce from Lemma \ref{lemma3.1} that
$$K_{a,b}(X)\le (J_{s+k}(X/M^a))^{k/(s+k)}(J_{s+k}(X/M^b))^{s/(s+k)}.$$
Consequently, on recalling (\ref{3.9}) and (\ref{3.11}), it follows that
\begin{align*}
\ldbrack K_{a,b}(X)\rdbrack &\ll \frac{X^\del \left((X/M^a)^{k/(s+k)}(X/M^b)^{s/(s+k)}\right)^{2s+2k-\frac{1}{2}k(k+1)+\eta_{s+k}}}{(X/M^b)^{2s}(X/M^a)^{2k-\frac{1}{2}k(k+1)}}\\
&\ll X^{\eta_{s+k}+\del}(M^{b-a})^{\frac{1}{2}k(k+1)s/(s+k)}.
\end{align*}
The conclusion of the lemma is now immediate.
\end{proof}

By substituting the estimate supplied by Lemma \ref{lemma5.3} into the conclusion of Lemma \ref{lemma6.1}, we obtain the basic iterative relation.

\begin{lemma}\label{lemma6.2}
Suppose that $a$ and $b$ are integers with $0\le a<b\le \frac{2}{3}(k\tet)^{-1}$. Put $H=\lceil \frac{1}{2}(k-1)b\rceil$. Then there exists an integer $h$, with $0\le h<H$, having the property that
\begin{align*}
\ldbrack K_{a,b}(X)\rdbrack \ll &\, X^\del M^{-7kh/4}(X/M^b)^{\eta_{s+k}(1-k/s)}\ldbrack K_{b,kb+h}(X)\rdbrack^{k/s}\\
&+M^{-kH/(3s)}(X/M^b)^{\eta_{s+k}}.
\end{align*}
\end{lemma}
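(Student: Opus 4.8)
The plan is to combine the two workhorses we have already proved: Lemma~\ref{lemma6.1}, which transfers the congruence latent in $K_{a,b}(X)$ into a bound involving $I_{b,kb}(X)$, and Lemma~\ref{lemma5.3}, which unpacks $I_{c,d}(X)$ for $c=b$, $d=kb$ into a term $M^{h(k-1)}K_{b,kb+h}(X)$ plus an acceptable error. First I would record the hypotheses: since $b\le \tfrac{2}{3}(k\tet)^{-1}$ we have $kb\le \tfrac{2}{3}\tet^{-1}$, and with $H=\lceil\tfrac12(k-1)b\rceil$ one checks $kb+H\le kb+\tfrac12(k-1)b+1 = \tfrac12(3k-1)b+1 \le \tfrac32 kb+1\le \tet^{-1}$ for $X$ large, so that Lemma~\ref{lemma5.3} applies with $a$ replaced by $b$ and $b$ replaced by $kb$ (noting $\lceil\tfrac12(kb-b)\rceil = \lceil\tfrac12(k-1)b\rceil=H$). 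Likewise the hypothesis $b\le\tet^{-1}$ needed in Lemma~\ref{lemma6.1} is satisfied. So there is an integer $h$ with $0\le h<H$ for which
$$I_{b,kb}(X)\ll M^{h(k-1)}K_{b,kb+h}(X)+M^{-H/2}(X/M^{kb})^{2s}(X/M^b)^{2k-\frac12 k(k+1)+\eta_{s+k}}.$$

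Next I would substitute this into Lemma~\ref{lemma6.1}. Raising the displayed bound to the power $k/s$ and using $(A+B)^{k/s}\ll A^{k/s}+B^{k/s}$, the term $I_{b,kb}(X)^{k/s}$ splits into a piece proportional to $(M^{h(k-1)}K_{b,kb+h}(X))^{k/s}$ and a piece proportional to the power $k/s$ of the $M^{-H/2}(\cdots)$ error. Then I would divide through by the normalising factor $(X/M^b)^{2s}(X/M^a)^{2k-\frac12 k(k+1)}$ appearing in the definition (\ref{3.9}) of $\ldbrack K_{a,b}(X)\rdbrack$, expressing everything in terms of $\ldbrack K_{a,b}(X)\rdbrack$ and $\ldbrack K_{b,kb+h}(X)\rdbrack$ on both sides and also inserting the bound (\ref{3.11}) (via (\ref{3.2})) in the shape $J_{s+k}(X/M^b)\ll (X/M^b)^{2(s+k)-\frac12 k(k+1)+\eta_{s+k}+\del}$ for the factor $(J_{s+k}(X/M^b))^{1-k/s}$. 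The normalisation of $K_{b,kb+h}(X)$ uses the weights attached to levels $b$ and $kb+h$, so the conversion from $K_{b,kb+h}(X)$ to $\ldbrack K_{b,kb+h}(X)\rdbrack$ contributes explicit powers of $M$ that must be tracked.

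The crux is then the bookkeeping of the powers of $M$. Collecting the exponent of $M$ in the main term, one has the contribution $\tfrac12 k(k-1)(a+b)+k(kb-a)$ from the prefactors in Lemma~\ref{lemma6.1}, the contribution $\tfrac{k}{s}h(k-1)$ from the $M^{h(k-1)}$ factor inside $I_{b,kb}$, the negative contribution from normalising $K_{b,kb+h}(X)$ (which carries a factor roughly $(M^{kb+h})^{-2s}\cdot\ldots$), and a further negative contribution $-2sb$ from the $(X/M^b)^{2s}$ in the denominator of $\ldbrack K_{a,b}(X)\rdbrack$ combined with the $J_{s+k}(X/M^b)^{1-k/s}$ term. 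The claim to verify is that all these algebraic terms (those not involving $h$) cancel or are absorbed, and that the $h$-dependent terms combine to something bounded by $M^{-7kh/4}$; here one uses that the positive $h$-term is $\tfrac{k}{s}h(k-1)$ while normalising $K_{b,kb+h}$ down through $h$ extra levels produces a negative $h$-term of size comparable to $2sh\cdot(k/s)=2kh$ or larger, leaving a net exponent below $-\tfrac{7}{4}kh$ once $X$ is large (the $X^\del$ soaks up the slack from $\del$ and from rounding). The error term is handled by the crude Lemma~\ref{lemma6.1b}, or directly: the $M^{-H/2}$ term, after raising to the power $k/s$ and renormalising, is of size $M^{-kH/(2s)}(X/M^b)^{\eta_{s+k}}$ up to the other $M$-powers, which after the same cancellation is dominated by $M^{-kH/(3s)}(X/M^b)^{\eta_{s+k}}$.

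The main obstacle I expect is precisely this exponent arithmetic: one must be scrupulous about the weights in the definition (\ref{3.9}) when $K_{b,kb+h}(X)$ lives at levels $(b,kb+h)$ rather than $(a,b)$, because the anticipated sizes $\calM^*$ differ, and a sign error anywhere collapses the whole iteration. A secondary point to watch is that Lemma~\ref{lemma5.3} is applied with its first index equal to $b$ (not $0$), so the condition $b<kb$ (i.e. $k\ge 2$) and the upper bound $kb+H\le\tet^{-1}$ must both be checked against the standing hypothesis $b\le\tfrac23(k\tet)^{-1}$; once those are in place the rest is deterministic. Everything else — the splitting $(A+B)^{k/s}\ll A^{k/s}+B^{k/s}$, the insertion of (\ref{3.2}), absorbing $X^\del$ factors — is routine.
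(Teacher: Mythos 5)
Your proposal follows essentially the same route as the paper: apply Lemma~\ref{lemma6.1} to reduce to $I_{b,kb}(X)$, unpack that via Lemma~\ref{lemma5.3} (after the ceiling-bound check $kb+H\le\frac32 kb\le\tet^{-1}$, where the tight bound is $H\le\frac12(k-1)b+\frac12$ rather than the $+1$ you wrote), renormalise, and verify that the $h$-free exponents of $M$ cancel identically while the $h$-dependent ones combine to $-h(2s-k+1)k/s\le -\frac74 kh$ since $s\ge k^2$, $k\ge2$. The only cosmetic imprecisions are the $+1$ in the ceiling estimate (which spoils the $b=1$ case until tightened) and describing the negative $h$-contribution as ``$2kh$ or larger'' when it is exactly $2kh$; neither affects the substance.
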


\begin{proof} On recalling (\ref{3.9}), it follows from Lemma \ref{lemma6.1} that
\begin{equation}\label{6.10}
\ldbrack K_{a,b}(X)\rdbrack \ll (M^b)^{2s}(M^a)^{2k-\frac{1}{2}k(k+1)}M^{\frac{1}{2}k(k-1)(b+a)}(M^{kb-a})^kT_1^{1-k/s}T_2^{k/s},
\end{equation}
where
$$T_1=\frac{J_{s+k}(X/M^b)}{X^{2s+2k-\frac{1}{2}k(k+1)}}\quad \text{and}\quad T_2=\frac{I_{b,kb}(X)}{X^{2s+2k-\frac{1}{2}k(k+1)}}.$$
But in view of (\ref{3.2}), one has
\begin{equation}\label{6.11}
T_1\ll (M^{-b})^{2s+2k-\frac{1}{2}k(k+1)}(X/M^b)^{\eta_{s+k}+\del}.
\end{equation}
Write $H=\lceil \frac{1}{2}(k-1)b\rceil$, and note that the hypotheses of the statement of the lemma ensure that
$$kb+H\le kb+{\textstyle{\frac{1}{2}(k-1)}}b+{\textstyle{\frac{1}{2}}}\le {\textstyle{\frac{3}{2}}}kb\le \tet^{-1}.$$
Consequently, it follows from Lemma \ref{lemma5.3} that there exists an integer $h$ with $0\le h<H$ having the property that
$$T_2\ll \frac{M^{h(k-1)}K_{b,kb+h}(X)}{X^{2s+2k-\frac{1}{2}k(k+1)}}+\frac{M^{-H/2}(X/M^b)^{\eta_{s+k}}}{(M^{kb})^{2s}(M^b)^{2k-\frac{1}{2}k(k+1)}}.$$
On recalling (\ref{3.9}), we therefore see that
\begin{equation}\label{6.12}
T_2\ll (M^{-kb})^{2s}(M^{-b})^{2k-\frac{1}{2}k(k+1)}\Ome,
\end{equation}
in which we have written
$$\Ome =M^{-(2s-k+1)h}\ldbrack K_{b,kb+h}(X)\rdbrack +M^{-H/2}(X/M^b)^{\eta_{s+k}}.$$

\par Substituting (\ref{6.11}) and (\ref{6.12}) into (\ref{6.10}), we deduce that
$$\ldbrack K_{a,b}(X)\rdbrack \ll M^{\ome(a,b)}(X/M^b)^{(1-k/s)(\eta_{s+k}+\del)}\Ome^{k/s},$$
in which we have written
\begin{align*}
\ome(a,b)=&\, 2sb+(2k-{\textstyle \frac{1}{2}}k(k+1))a+{\textstyle \frac{1}{2}}k(k-1)(b+a)+k(kb-a)\\
&\,-(1-k/s)(2s+2k-{\textstyle \frac{1}{2}}k(k+1))b-(2skb+(2k-{\textstyle \frac{1}{2}}k(k+1))b)k/s.
\end{align*}
A modicum of computation reveals that $\ome(a,b)=0$, and thus we may infer that
\begin{align*}
\ldbrack K_{a,b}(X)\rdbrack \ll &\, (M^{-H/2})^{k/s}(X/M^b)^{\eta_{s+k}+\del(1-k/s)}\\
&\, +X^\del M^{-(2s-k+1)hk/s}(X/M^b)^{\eta_{s+k}(1-k/s)}\ldbrack K_{b,kb+h}(X)\rdbrack^{k/s}.
\end{align*}
The conclusion of the lemma follows on noting that $\del$ may be assumed small enough that $(X/M^b)^{\del(1-k/s)}\ll M^{kH/(6s)}$, and further that the assumptions $s\ge k^2$ and $k\ge 2$ together imply that $2s-k+1\ge \frac{7}{4}s$.
\end{proof}

\section{The iterative process} The estimate supplied by Lemma \ref{lemma5.4} bounds $J_{s+k}(X)$ in terms of $K_{0,1}(X)$, and Lemma \ref{lemma6.2} relates $K_{a,b}(X)$, for $b>a\ge 0$, to $K_{b,kb+h}(X)$, for some integer $h$ with $0\le h\le \frac{1}{2}(b-a)$. By repeatedly applying Lemma \ref{lemma6.2}, therefore, we are able to bound $J_{s+k}(X)$ in terms of the quantity $K_{c,d}(X)$, with $c$ and $d$ essentially as large as we please. Unfortunately, this process is not particularly simple to control, largely owing to the possibility that at any point in our iteration, a value of $h$ in the expression $K_{b,kb+h}(X)$ may be forced upon us with $h>0$. This defect in our procedure may accelerate us too rapidly towards the final step of the iteration. Our goal in this section, therefore, is to control the iterative process at a fine enough level that its potential is not substantially eroded.

\begin{lemma}\label{lemma7.1} Suppose that $a$ and $b$ are integers with $0\le a<b\le \frac{2}{3}(k\tet)^{-1}$. Suppose in addition that there exist non-negative numbers $\psi$, $c$ and $\gam$, with $c\le (2s/k)^N$, for which
\begin{equation}\label{7.1}
X^{\eta_{s+k}(1+\psi \tet)}\ll X^{c\del}M^{-\gam}\ldbrack K_{a,b}(X)\rdbrack .
\end{equation}
Then, for some non-negative integer $h$ with $h\le \frac{1}{2}(k-1)b$, one has
$$X^{\eta_{s+k}(1+\psi'\tet)}\ll X^{c'\del}M^{-\gam'}\ldbrack K_{a',b'}(X)\rdbrack ,$$
where
$$\psi'=(s/k)\psi+(s/k-1)b,\quad c'=(s/k)(c+1),\quad \gam'=(s/k)\gam +{\textstyle{\frac{7}{4}}}sh,$$
$$a'=b\quad \text{and}\quad b'=kb+h.$$
\end{lemma}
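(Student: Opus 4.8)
The plan is to feed the hypothesis (\ref{7.1}) into the iterative mechanism assembled in Lemma~\ref{lemma6.2}. Since $0\le a<b\le\frac23(k\tet)^{-1}$, Lemma~\ref{lemma6.2} applies directly and furnishes a non-negative integer $h$ with $h<H=\lceil\frac12(k-1)b\rceil$ — in particular $h\le\frac12(k-1)b$, as required in the conclusion — satisfying
\[
\ldbrack K_{a,b}(X)\rdbrack \ll X^\del M^{-7kh/4}(X/M^b)^{\eta_{s+k}(1-k/s)}\ldbrack K_{b,kb+h}(X)\rdbrack^{k/s}+M^{-kH/(3s)}(X/M^b)^{\eta_{s+k}}.
\]
First I would substitute this into (\ref{7.1}), so that $X^{\eta_{s+k}(1+\psi\tet)}$ is bounded by $X^{c\del}M^{-\gam}$ times the right-hand side above. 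This produces two competing terms, and the first task is to dispose of the second one.

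For the second term, the bound $X^{\eta_{s+k}(1+\psi\tet)}\ll X^{c\del}M^{-\gam}M^{-kH/(3s)}(X/M^b)^{\eta_{s+k}}$ must be shown to be untenable. Here one uses $M=X^\tet$ and $0\le\eta_{s+k}\le\frac12k(k+1)$: the factor $(X/M^b)^{\eta_{s+k}}$ contributes at most $X^{\eta_{s+k}}$, which is absorbed into the left side since $\psi\tet\ge0$, leaving essentially $1\ll X^{c\del}M^{-\gam-kH/(3s)}$. Because $H\ge\frac12(k-1)b\ge\frac12 b$ (for $k\ge2$ this is immediate; more carefully $H\ge1$ since $b\ge1$) and $\gam\ge0$, the exponent of $M$ here is negative of size at least $kH/(3s)\gg k b/(6s)$, which dwarfs the $X^{c\del}=M^{c\del/\tet}$ contribution once one recalls that $\del$ was chosen with $\del<(Ns)^{-3N}$ while $c\le(2s/k)^N$, so $c\del/\tet=2c\del(s/k)^{N+1}$ is tiny compared with any fixed power of $M$. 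Hence for $X$ large this inequality fails, and the second term cannot dominate.

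Consequently the first term must dominate, giving
\[
X^{\eta_{s+k}(1+\psi\tet)}\ll X^{(c+1)\del}M^{-\gam-7kh/4}(X/M^b)^{\eta_{s+k}(1-k/s)}\ldbrack K_{b,kb+h}(X)\rdbrack^{k/s}.
\]
Now I would raise both sides to the power $s/k$ to clear the exponent on $\ldbrack K_{b,kb+h}(X)\rdbrack$, and reorganise. Writing $M^b=X^{b\tet}$, the factor $(X/M^b)^{\eta_{s+k}(1-k/s)}$ becomes $X^{\eta_{s+k}(1-k/s)(1-b\tet)}$; combining the powers of $X^{\eta_{s+k}}$ on both sides after multiplying by $s/k$ gives the new exponent $\psi'\tet$ with
\[
1+\psi'\tet=\tfrac{s}{k}\bigl(1+\psi\tet\bigr)-\tfrac{s}{k}\bigl(1-\tfrac{k}{s}\bigr)(1-b\tet),
\]
and a direct expansion yields $\psi'=(s/k)\psi+(s/k-1)b$ as claimed — the constant terms $\frac sk-\frac sk(1-\frac ks)=1$ cancel correctly, and the $\tet$-coefficients assemble to $(s/k)\psi+(s/k)(1-\frac ks)b=(s/k)\psi+(s/k-1)b$. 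The power of $X^\del$ becomes $(s/k)(c+1)=c'$, and the power of $M^{-1}$ becomes $(s/k)(\gam+\frac74kh)=(s/k)\gam+\frac74 sh=\gam'$. Setting $a'=b$, $b'=kb+h$ then gives exactly the asserted inequality. The main obstacle is the bookkeeping in the second paragraph: one must verify with genuine care that the error term $M^{-kH/(3s)}(X/M^b)^{\eta_{s+k}}$ is negligible uniformly in the range of parameters, exploiting the chain of inequalities $\del<(Ns)^{-3N}$, $c\le(2s/k)^N$, $\tet=\frac12(k/s)^{N+1}$, $\eta_{s+k}\le\frac12k(k+1)$, and $b\le\frac23(k\tet)^{-1}$ — this is where the delicate relative sizing of all the auxiliary quantities is consumed, and everything else is routine algebra.
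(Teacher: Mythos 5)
Your proposal is correct and follows essentially the same route as the paper: substitute the hypothesis~(\ref{7.1}) into the bound from Lemma~\ref{lemma6.2}, observe that the error term coming from the conditioning step is negligible against $X^{\eta_{s+k}(1+\psi\tet)}$ thanks to the choice $\del<(Ns)^{-3N}$, $c\le(2s/k)^N$, $\tet=\frac12(k/s)^{N+1}$ (the paper packages this as $X^{c\del}<M^{1/(3s)}$ and $M^{1/(3s)}>X^\del$, so the error term becomes $\ll X^{\eta_{s+k}-\del}$, which is absorbed because $\psi\tet\ge0$), and then raise to the power $s/k$ and rearrange to read off $\psi'$, $c'$, $\gam'$. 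Your algebraic verification of $\psi'=(s/k)\psi+(s/k-1)b$ and the reading of $c'$, $\gam'$ match the paper exactly, and your remark that $h<\lceil\frac12(k-1)b\rceil$ gives the integer bound $h\le\frac12(k-1)b$ is the same elementary observation the paper uses.
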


\begin{proof} Since we may suppose that $c\le (2s/k)^N$ and $\del<(Ns)^{-3N}$, we have
$$c\del<s^{-2N}/3<\tet/(3s),$$
and hence $X^{c\del}<M^{1/(3s)}$. In addition, one has $M^{1/(3s)}>X^\del$. Consequently, it follows from Lemma \ref{lemma6.2} that there exists an integer $h$ with $0\le h<\lceil \frac{1}{2}(k-1)b\rceil$ with the property that
$$\ldbrack K_{a,b}(X)\rdbrack \ll M^{-k/(3s)}X^{\eta_{s+k}}+X^\del M^{-7kh/4}(X/M^b)^{(1-k/s)\eta_{s+k}}\ldbrack K_{b,kb+h}(X)\rdbrack^{k/s}.$$
In view of the hypothesised upper bound (\ref{7.1}), therefore, we deduce that
$$X^{\eta_{s+k}(1+\psi \tet)}\ll X^{\eta_{s+k}-\del}+X^{(c+1)\del}M^{-\gam-7kh/4}(X/M^b)^{(1-k/s)\eta_{s+k}}\ldbrack K_{b,kb+h}(X)\rdbrack^{k/s},$$
whence
$$X^{\eta_{s+k}(k/s+(\psi+(1-k/s)b)\tet)}\ll X^{(c+1)\del}M^{-\gam-7kh/4}\ldbrack K_{b,kb+h}(X)\rdbrack^{k/s}.$$
The conclusion of the lemma follows on raising left and right hand sides here to the power $s/k$.
\end{proof}

Repeated application of Lemma \ref{lemma7.1} provides a series of upper bounds for $\eta_{s+k}$. What remains is to ensure that the upper bound $b\le \frac{2}{3}(k\tet)^{-1}$, required by the hypotheses of the lemma, does not preclude the possibility of making many iterations.

\begin{lemma}\label{lemma7.2}
Whenever $s\ge k^2$, one has $\eta_{s+k}=0$.
\end{lemma}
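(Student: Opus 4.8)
The plan is to iterate Lemma \ref{lemma7.1} starting from the bound supplied by Lemma \ref{lemma5.4}, and to show that after $N$ iterations one obtains a bound on $\eta_{s+k}$ that forces $\eta_{s+k}=0$. First I would record the base case. By Lemma \ref{lemma5.4} one has $J_{s+k}(X)\ll M^{2s}K_{0,1}(X)$, and in view of (\ref{3.10}) together with the normalisation (\ref{3.9}) this translates into an inequality of the shape (\ref{7.1}) with $(a,b)=(0,1)$, and with initial data $\psi_0=0$, $c_0=O(1)$, $\gam_0=O(1)$; more precisely, after comparing the normalisations of $J_{s+k}$ and $K_{0,1}$, one checks that $X^{\eta_{s+k}}\ll X^{c_0\del}M^{-\gam_0}\ldbrack K_{0,1}(X)\rdbrack$ for suitable absolute $c_0,\gam_0\ge 0$. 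This is the input to the iteration.

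Next I would run Lemma \ref{lemma7.1} repeatedly, generating sequences $(a_n,b_n,\psi_n,c_n,\gam_n,h_n)$ with $a_{n+1}=b_n$, $b_{n+1}=kb_n+h_n$, and the recursions $\psi_{n+1}=(s/k)\psi_n+(s/k-1)b_n$, $c_{n+1}=(s/k)(c_n+1)$, $\gam_{n+1}=(s/k)\gam_n+\frac{7}{4}sh_n$. The key observations are: (i) since $0\le h_n\le \frac12(k-1)b_n$, one has $b_n\le b_{n+1}\le \frac{3k}{2}b_n$, so $b_n$ grows at most like $(\tfrac{3k}{2})^n$, and hence the hypothesis $b_n\le \frac23(k\tet)^{-1}$ — which amounts to $b_n\le \frac13 s (k/s)^{-N}\cdot$ (a constant) — remains valid for all $n\le N$ because $\tet=\frac12(k/s)^{N+1}$ was chosen with exactly this many iterations in mind; (ii) since $c_n$ satisfies $c_{n+1}\le (s/k)(c_n+1)$ with $c_0=O(1)$, one gets $c_n\le (2s/k)^N$ for $n\le N$, so the side condition $c\le (2s/k)^N$ in Lemma \ref{lemma7.1} is met throughout; and (iii) because $\del<(Ns)^{-3N}$, the accumulated error term $X^{c_n\del}$ is negligible against any fixed positive power of $M$. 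After $N$ steps one arrives at
$$X^{\eta_{s+k}(1+\psi_N\tet)}\ll X^{c_N\del}M^{-\gam_N}\ldbrack K_{a_N,b_N}(X)\rdbrack .$$
Now apply the crude bound of Lemma \ref{lemma6.1b}, namely $\ldbrack K_{a_N,b_N}(X)\rdbrack\ll X^{\eta_{s+k}+\del}(M^{b_N-a_N})^{\frac12k(k+1)}$, which is exactly why that lemma was isolated. Combining, and writing $M=X^\tet$, one obtains $X^{\eta_{s+k}\psi_N\tet}\ll X^{O(c_N\del)}M^{\frac12k(k+1)b_N-\gam_N}$, so that
$$\eta_{s+k}\,\psi_N\,\tet\ \le\ \big(\tfrac12k(k+1)b_N-\gam_N\big)\tet+O(c_N\del).$$

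The crux is then a lower bound for $\psi_N$ and an upper bound for the bracket $\frac12k(k+1)b_N-\gam_N$. From $\psi_{n+1}=(s/k)\psi_n+(s/k-1)b_n$ and $\psi_0=0$ one gets, since $s\ge k^2$ so $s/k\ge k\ge 2$, that $\psi_N\ge (s/k-1)\sum_{n=0}^{N-1}(s/k)^{N-1-n}b_n\gg (s/k)^{N-1}b_{N-1}$, which is comparable to $(s/k)^{-2}\cdot (s/k)^N b_N/k$ up to the growth of $b$; in any case $\psi_N\tet\gg 1$ because $\tet^{-1}=\frac12(s/k)^{N+1}$. Simultaneously the recursion for $\gam_n$ tracks the term $\frac74 s h_n$ which must be compared against the growth in $\frac12k(k+1)b_n$ coming from $b_{n+1}=kb_n+h_n$; a direct computation — this is where the choice of the constants $\frac74$ and the inequality $2s-k+1\ge\frac74 s$ from Lemma \ref{lemma6.2} is used — shows that $\frac12k(k+1)b_N-\gam_N$ grows no faster than $(\text{const})(s/k)^{N-1}$, so that $\big(\frac12k(k+1)b_N-\gam_N\big)\tet=O((s/k)^{-2})$ is bounded. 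Hence $\eta_{s+k}\cdot\psi_N\tet=O(1)$ with $\psi_N\tet\to\infty$ as $N\to\infty$; since $N$ was an arbitrary large parameter and the implied constants do not depend on $N$, letting $N\to\infty$ forces $\eta_{s+k}=0$. The main obstacle is the bookkeeping in step three: one must verify that throughout the $N$ iterations the interval constraint $b_n\le\frac23(k\tet)^{-1}$ and the size constraint $c_n\le(2s/k)^N$ are respected, and that the net exponent $\frac12k(k+1)b_N-\gam_N$ stays small relative to $\psi_N$ — all of which hinges delicately on the precise choices $\tet=\frac12(k/s)^{N+1}$ and $\del<(Ns)^{-3N}$ made at the outset, and on the hypothesis $s\ge k^2$ entering through $2s-k+1\ge\frac74 s$ and through the requirement that the geometric ratios $s/k$ dominate the linear growth of $b_n$.
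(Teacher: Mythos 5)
Your proposal correctly identifies the overall strategy and the relevant lemmas (base case from Lemma \ref{lemma5.4}, iteration via Lemma \ref{lemma7.1}, closing off with Lemma \ref{lemma6.1b}), and the recursions $(\psi_n), (c_n), (\gamma_n)$ match those in the paper. However, there is a genuine and fatal gap in item (i), which undermines the whole argument.

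You assert that since $b_{n+1}=kb_n+h_n$ with $h_n\le\frac12(k-1)b_n$, the crude bound $b_n\le(\tfrac{3k}{2})^n$ suffices to keep $b_n\le\frac23(k\theta)^{-1}$ for all $n\le N$. It does not. With $s=k^2$ (the critical case at the end of the proof) and $\theta=\frac12(k/s)^{N+1}=\frac12 k^{-N-1}$, the constraint becomes $b_n\lesssim k^N$, whereas $(\tfrac{3k}{2})^n$ evaluated at $n=N$ is $(\tfrac32)^N k^N$, which exceeds $k^N$ by an unbounded factor. So the crude bound on $b_n$ cuts off the iteration after roughly $N\log k/\log(3k/2)<N$ steps, and with $N-n$ tending to infinity, the quantity $\psi_n\theta$ (which you need to tend to infinity) actually tends to $0$. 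In effect, the whole budget for the iteration is consumed prematurely and no nontrivial upper bound on $\eta_{s+k}$ results.

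What the paper proves, and what your proposal omits, is the tighter estimate $b_n<2(s/k)^n$, established as part of a \emph{simultaneous} induction together with (\ref{7.4}). The mechanism is a contradiction argument: one first records the invariant $\gamma_m\ge\tfrac74 k^2(b_m-(s/k)^m)$ (relation (\ref{7.5}) in the paper), which follows from the recursions and $s\ge k^2$; then, if ever $b'\ge 2(s/k)^{n+1}$, one deduces $\gamma'\ge\tfrac78 k^2 b'$, and feeding Lemma \ref{lemma6.1b} into (\ref{7.6}) gives $X^{\eta_{s+k}\psi_{n+1}\theta}\ll (M^{b'})^{\frac12 k(k+1)-\frac78 k^2}\ll M^{-1/2}$, forcing $\eta_{s+k}<0$, a contradiction. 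It is precisely this step that controls $b_n$ and allows the iteration to be carried all the way to $n=N$. Your appeal to the same invariant later, in order to bound $\tfrac12 k(k+1)b_N-\gamma_N$, implicitly uses (\ref{7.5}), but you never prove it, and more importantly you never use it to bound $b_n$ itself, which is the crucial application. You should also note that at the end one must first specialise to $s=k^2$, and then deduce the case $s>k^2$ from the trivial inequality recorded in \S3, a step your proposal does not mention.
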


\begin{proof} We may suppose that $\eta_{s+k}>0$, for otherwise there is nothing to prove. We begin by defining three sequences $(a_n)$, $(b_n)$, $(h_n)$ of non-negative integers for $0\le n\le N$. We put $a_0=0$, $b_0=1$ and $h_0=0$. Then, when $0\le n<N$, we fix any integer $h_n$ with $0\le h_n\le \frac{1}{2}(k-1)b_n$, and then define
\begin{equation}\label{7.2}
a_{n+1}=b_n\quad \text{and}\quad b_{n+1}=kb_n+h_n.
\end{equation}
Next we define the auxiliary sequences $(\psi_n)$, $(c_n)$, $(\gam_n)$ of non-negative real numbers for $0\le n\le N$ by putting $\psi_0=0$, $c_0=1$, $\gam_0=0$. Then, for $0\le n<N$, we define
\begin{align}
\psi_{n+1}&=(s/k)\psi_n+(s/k-1)b_n,\label{7.3a}\\
c_{n+1}&=(s/k)(c_n+1),\label{7.3}\\
\gam_{n+1}&=(s/k)\gam_n+{\textstyle{\frac{7}{4}}}sh_n.\label{7.3b}
\end{align}
Notice here that an inductive argument readily confirms that $c_n\le (2s/k)^n$ for $0\le n\le N$. We claim that a choice may be made for the sequence $(h_n)$ in such a manner that for $0\le n\le N$, one has
\begin{equation}\label{7.4a}
b_n<2(s/k)^n
\end{equation}
and
\begin{equation}\label{7.4}
X^{\eta_{s+k}(1+\psi_n\tet)}\ll X^{c_n\del}M^{-\gam_n}\ldbrack K_{a_n,b_n}(X)\rdbrack .
\end{equation}
When $n=0$, the validity of the relation (\ref{7.4a}) follows by definition, whilst (\ref{7.4}) is immediate from (\ref{3.9}), (\ref{3.10}) and Lemma \ref{lemma5.4}, since the latter together imply that
$$X^{\eta_{s+k}-\del}<\ldbrack J_{s+k}(X)\rdbrack \ll \ldbrack K_{0,1}(X)\rdbrack .$$

\par We prepare the ground for the treatment of larger indices $n$ with a preliminary discussion of the recurrence relations (\ref{7.2}) to (\ref{7.3b}). Observe first that when $m\ge 0$, one has
$$\gam_{m+1}-{\textstyle{\frac{7}{4}}}k^2b_{m+1}\ge \gam_{m+1}-{\textstyle{\frac{7}{4}}}sb_{m+1}=(s/k)(\gam_m-{\textstyle{\frac{7}{4}}}k^2b_m).$$
But $\gam_0-\frac{7}{4}k^2b_0=-\frac{7}{4}k^2$, and so it follows by induction that when $0\le m\le N$, one has
\begin{equation}\label{7.5}
\gam_m\ge {\textstyle{\frac{7}{4}}}k^2(b_m-(s/k)^m).
\end{equation}

\par Suppose now that the desired conclusions (\ref{7.4a}) and (\ref{7.4}) have been established for the index $n<N$. Then as a consequence of (\ref{7.4a}) one has $kb_n\tet <k(s/k)^{n-N-1}<\frac{2}{3}$, whence $b_n<\frac{2}{3}(k\tet)^{-1}$. We may therefore apply Lemma \ref{lemma7.1} to deduce from (\ref{7.4}) that there exists a non-negative integer $h$, with $h\le\frac{1}{2}(k-1)b_n$, for which one has the upper bound
\begin{equation}\label{7.6}
X^{\eta_{s+k}(1+\psi'\tet)}\ll X^{c'\del}M^{-\gam'}\ldbrack K_{a',b'}(X)\rdbrack ,
\end{equation}
where
\begin{equation}\label{7.6a}
a'=b_n=a_{n+1},\quad b'=kb_n+h,
\end{equation}
\begin{align}
\psi'&=(s/k)\psi_n+(s/k-1)b_n=\psi_{n+1},\notag \\
c'&=(s/k)(c_n+1)=c_{n+1},\notag \\
\gam'&=(s/k)\gam_n+{\textstyle{\frac{7}{4}}}sh.\label{7.6b}
\end{align}

\par Let us suppose, if possible, that $b'\ge 2(s/k)^{n+1}$. The relations (\ref{7.6a}) and (\ref{7.6b}) then combine with (\ref{7.5}) to show that
\begin{align}
\gam'&=(s/k)\gam_n+{\textstyle{\frac{7}{4}}}s(b'-kb_n)\notag \\
&\ge (s/k)(\gam_n-{\textstyle{\frac{7}{4}}}k^2b_n)+{\textstyle{\frac{7}{4}}}k^2b'\notag \\
&\ge {\textstyle{\frac{7}{4}}}k^2(b'-(s/k)^{n+1})\ge {\textstyle{\frac{7}{8}}}k^2b'.\label{7.6c}
\end{align}
But $b'=kb_n+h\le \frac{3}{2}kb_n<\tet^{-1}$, and so it follows from Lemma \ref{lemma6.1b} that
\begin{equation}\label{7.6d}
\ldbrack K_{a',b'}(X)\rdbrack \ll X^{\eta_{s+k}+\del}(M^{b'})^{\frac{1}{2}k(k+1)}.
\end{equation}
Thus, on substituting (\ref{7.6c}) and (\ref{7.6d}) into (\ref{7.6}), we arrive at the upper bound
$$X^{\eta_{s+k}(1+\psi_{n+1}\tet)}\ll X^{\eta_{s+k}+(c_{n+1}+1)\del}(M^{b'})^{\frac{1}{2}k(k+1)-\frac{7}{8}k^2}.$$
We now recall that $c_{n+1}\le (2s/k)^{n+1}$, and thus confirm that $X^{(c_{n+1}+1)\del}<M^{1/2}$. In this way, we obtain the upper bound $X^{\eta_{s+k}\psi_{n+1}\tet}\ll M^{-1/2}$. Since $\psi_{n+1}$ and $\tet$ are both positive, we are forced to conclude that $\eta_{s+k}<0$, contradicting our opening hypothesis. The assumption that $b'\ge 2(s/k)^{n+1}$ is therefore untenable, and so we must in fact have $b'<2(s/k)^{n+1}$. We take $h_{n+1}$ to be the integer $h$ at hand, so that $b'=b_{n+1}$ and $\gam'=\gam_{n+1}$, and thereby we obtain the desired conclusion that (\ref{7.4a}) and (\ref{7.4}) hold with $n$ replaced by $n+1$. This completes the present inductive step.\par

At this point, we have confirmed the validity of the relations (\ref{7.4a}) and (\ref{7.4}) for $0\le n\le N$. We next bound the sequences occurring in (\ref{7.4}) so as to extract a suitable conclusion. The bound $c_n<(n+1)(s/k)^n$ is readily confirmed by induction from (\ref{7.3}), and the lower bound $\gam_n\ge 0$ already suffices for our purposes at this stage. In addition, the relation (\ref{7.2}) plainly implies that $b_n\ge k^n$, whence from (\ref{7.3a}) we deduce that for $s\ge k^2$, one has
$$\psi_{n+1}\ge k\psi_n+(k-1)k^n,$$
and by induction this delivers the lower bound $\psi_n\ge n(k-1)k^{n-1}$. Finally, we find from (\ref{7.4a}) that $b_N\tet <k/s<1$, whence $b_N<\tet^{-1}$. Making use of Lemma \ref{lemma6.1b}, therefore, we find from (\ref{7.4}) that
\begin{equation}\label{7.a}
X^{\eta_{s+k}(1+\psi_N\tet)}\ll X^{\eta_{s+k}+(c_N+1)\del}(M^{b_N})^{\frac{1}{2}k(k+1)}\ll X^{\eta_{s+k}+k^2}.
\end{equation}
But since $\tet=\frac{1}{2}(k/s)^{N+1}$, it follows that
$$\eta_{s+k}\le \frac{k^2}{\psi_N\tet}\le \frac{2k^2(s/k)^{N+1}}{N(k-1)k^{N-1}}.$$
It is at this point only that we restrict $s$ to be $k^2$, and thus we obtain the upper bound $\eta_{k(k+1)}\le 2k^4/N$. But we are at liberty to take $N$ as large as we please in terms of $k$, and thus $\eta_{k(k+1)}$ can be made arbitrarily small. We are therefore forced to conclude that in fact $\eta_{k(k+1)}=0$. But then, as in the discussion of the opening paragraph of \S3, we may conclude that $\eta_s=0$ whenever $s\ge k(k+1)$. This completes the proof of the lemma.
\end{proof}

We have now reached the crescendo of this opus, for in view of (\ref{2.01}) and (\ref{2.02}), the conclusion of Lemma \ref{lemma7.2} already establishes Theorem \ref{theorem1.1}.\par

A perusal of the proof of Lemma \ref{lemma7.2} might give the impression that it is critical to the success of our iterative process that $s=k^2$, and that the method is inherently unstable. This notion is, however, mistaken. If one were to have $s>\frac{3}{2}k^2$, then one easily reaches the conclusion that $\eta_{s+k}=0$ simply by comparing the rates of growth of $\psi_n$ and $b_n$ in the above argument. 
Such a procedure can also be adapted, with care, to the range $s>\frac{5}{4}k^2$. It is only when $k^2\le s\le \frac{5}{4}k^2$ that the behaviour of the sequences $(b_n)$ and $(\psi_n)$, depending as they do on $(h_n)$, become so difficult to control. The restriction to the case $s=k^2$ should, therefore, be seen rather as a simplifying manoeuvre rather than an inescapable mandate. 

\section{Estimates of Weyl type} The derivation of our upper bounds for Weyl sums, and the application of these estimates to analyse the distribution of polynomials modulo $1$, is easily accomplished by applying Theorem \ref{theorem1.1} within results familiar from the literature. We are therefore concise in our discussion of the associated arguments.

\begin{proof}[The proof of Theorem \ref{theorem1.5}] With the hypotheses of the statement of Theorem \ref{theorem1.5}, it follows from \cite[Theorem 5.2]{Vau1997} that for each natural number $s$, one has
$$f_k(\bfalp;X)\ll (J_{s,k-1}(2X)X^{\frac{1}{2}k(k-1)}(q^{-1}+X^{-1}+qX^{-j}))^{1/(2s)}\log (2X).$$
But from Theorem \ref{theorem1.1} it follows that when $s=k(k-1)$, one has
$$J_{s,k-1}(2X)\ll X^{2s-\frac{1}{2}k(k-1)+\eps},$$
and thus
$$f_k(\bfalp;X)\ll X^{1+\eps}(q^{-1}+X^{-1}+qX^{-j})^{1/(2k(k-1))}.$$
As we shall find in \S9 below, when $s\ge k^2-k+1$, one has also the $\eps$-free upper bound $J_{s,k-1}(X)\ll X^{2s-\frac{1}{2}k(k-1)}$, and in like manner this delivers the estimate
$$f_k(\bfalp;X)\ll X(q^{-1}+X^{-1}+qX^{-j})^{1/(2k^2-2k+2)}\log (2X).$$
\end{proof}

\begin{proof}[The proof of Theorem \ref{theorem1.6}]
One may establish Theorem \ref{theorem1.6} by applying the argument underlying the proofs of \cite[Theorems 4.3 and 4.4]{Bak1986}. Let $\eps$ be a sufficiently small positive number. We begin by putting $\tau=1/(4k(k-1))$ and $A=X^{1-\tau+\eps}$, and then observe that Theorem \ref{theorem1.1} shows that one may replace $\tet$ by $\eps$ in the case $l=k$ of \cite[Theorem 4.3]{Bak1986}. In this way, we find that the hypotheses of the statement of Theorem \ref{theorem1.6} imply that there exist coprime pairs of integers $q_j$, $b_j$ $(2\le j\le k)$ such that
$$q_j\ge 1,\quad |q_j\alp_j-b_j|\le X^{\eps-j}(X/A)^{2k(k-1)}\ (2\le j\le k),$$
and such that the least common multiple $q_0$ of $q_2,\ldots ,q_k$ satisfies
$$q_0\le X^\eps (X/A)^{2k(k-1)}.$$
Notice here that
$$X^\eps (X/A)^{2k(k-1)}\le X^\eps (X^{\tau-\eps})^{2k(k-1)}<X^{\frac{1}{2}-3\eps}.$$
Write $r$ for $q_0$, and $v_j$ for $b_jq_0/q_j$ $(2\le j\le k)$. Then one has
$$|r\alp_j-v_j|\le X^{2\eps -j}(X/A)^{4k(k-1)}\le X^{1-j}/(4k^4)\quad (2\le j\le k).$$
Next, denote by $d$ the greatest common divisor $d=(r,v_2,\dots ,v_k)$. Then, with the hypotheses of the statement of Theorem \ref{theorem1.6}, it is a consequence of \cite[Lemma 4.6]{Bak1986} that there is a natural number $t$ with $t\le 2k^2$ such that
\begin{align*}
trd^{-1}&\le (X/A)^kX^{3k\eps}\\
t|r\alp_j-v_j|d^{-1}&\le (X/A)^kX^{3k\eps-j}\ (2\le j\le k)\\
\|trd^{-1}\alp_1\|&\le (X/A)^kX^{3k\eps -1}.
\end{align*}
But $(X/A)^k=X^{k\tau-k\eps}$, and so whenever $\del>k\tau+2k\eps$ one may conclude that there exist integers $q,a_1,\dots ,a_k$ such that
$$1\le q\le X^\del\quad \text{and}\quad |q\alp_j-a_j|\le X^{\del-j}\quad (1\le j\le k).$$
Since we have supposed $\eps$ to be sufficiently small, the same conclusion follows whenever $\del>k\tau$, and so the proof of Theorem \ref{theorem1.6} is complete.
\end{proof}

\begin{proof}[The proof of Theorem \ref{theorem1.7}.]
We may apply the argument of the proof of \cite[Theorem 4.4]{Bak1986}, substituting the modifications available from Theorem \ref{theorem1.6} above and its proof. Let $\del$ be a positive number. Suppose that $P\ll X$ and $(MXP^{-1})^{4k(k-1)}\le X^{1-\del}$. Then we find that when
$$\sum_{m=1}^M|f_k(m\bfalp;X)|\ge P,$$
then there exist integers $y,u_1,\dots ,u_k$ such that
$$1\le y\le M(MXP^{-1})^kX^\eps\quad \text{and}\quad |y\alp_j-u_j|\le (MXP^{-1})^kX^{\eps-j}\ (1\le j\le k).$$
From here, as in the proof of \cite[Theorem 4.5]{Bak1986}, the remaining part of our argument is straightforward. If one has
\begin{equation}\label{8.0}
\min_{1\le n\le X}\|\alp_1n+\dots +\alp_kn^k\|>X^{\del-\tau(k)},
\end{equation}
then with $M=[X^{\tau(k)-\del}]+1$, one obtains the lower bound
$$\sum_{m=1}^M|f_k(m\bfalp;X)|>{\textstyle{\frac{1}{6}}}X.$$
The above discussion then shows that there exists a natural number $y$ such that
$$y\ll M^{k+1}X^\eps \ll X^{(k+1)\tau(k)+\eps}\quad \text{and}\quad \|y\alp_j\|\ll X^{k\tau(k)-j+\eps}\quad (1\le j\le k).$$
Thus we find that $y\le X$ and that
$$\|\alp_1y+\ldots +\alp_ky^k\|\le \sum_{j=1}^kX^{j-1}\|y\alp_j\|\ll X^{k\tau(k)-1+\eps}<X^{-\tau(k)}.$$
This upper bound contradicts our earlier hypothesis (\ref{8.0}), and thus we are forced to conclude that
$$\min_{1\le n\le X}\|\alp_1n+\ldots +\alp_kn^k\|\le X^{\del-\tau(k)}.$$
This completes the proof of Theorem \ref{theorem1.7}.
\end{proof}

\section{Tarry's problem, and related topics} Our discussion of Tarry's problem follows a familiar path. Let $s$ be a natural number with $s\le k^3$, and define $\rho(\bfh)$ to be the number of integral solutions of the system of equations
$$\sum_{i=1}^sx_i^j=h_j\quad (1\le j\le k),$$
with $1\le \bfx\le X$. In addition, let $\sig(\bfg)$ denote the number of integral solutions of the system of equations
$$\sum_{i=1}^sx_i^j=g_j\quad (1\le j\le k+1),$$
with $1\le \bfx\le X$. Observe that
$$\rho(\bfh)=\sum_{1\le g_{k+1}\le sX^{k+1}}\sig(\bfh,g_{k+1}).$$
Consequently, if for all values of $\bfh$ one were to have $\sig(\bfh,g_{k+1})\ne 0$ only for a set $\calA(\bfh)$ of values of $g_{k+1}$ of cardinality at most $t$, then it would follow from Cauchy's inequality that
$$\rho(\bfh)^2\le \Bigl( \sum_{\substack{1\le g_{k+1}\le sX^{k+1}\\ g_{k+1}\in \calA(\bfh)}}\sig(\bfh,g_{k+1})\Bigr)^2\le \text{card}(\calA(\bfh))\sum_{1\le g_{k+1}\le sX^{k+1}}\sig(\bfh,g_{k+1})^2.$$
If such were the case, then one would have
\begin{align*}
J_{s,k}(X)&=\sum_{1\le h_1\le sX}\dots \sum_{1\le h_k\le sX^k}\rho(\bfh)^2\\
&\le t\sum_{1\le h_1\le sX}\dots \sum_{1\le h_k\le sX^k}\sum_{1\le g_{k+1}\le sX^{k+1}}\sig(\bfh,g_{k+1})^2=tJ_{s,k+1}(X).
\end{align*}
What we have shown is that when $X$ is sufficiently large, and $J_{s,k}(X)>tJ_{s,k+1}(X)$, then there exists a choice of $\bfh$ such that there are more than $t$ choices for $g_{k+1}$ with 
$\sig(\bfh,g_{k+1})>0$. There therefore exists a solution of the system
$$\sum_{i=1}^sx_{i1}^j=\sum_{i=1}^sx_{i2}^j=\ldots =\sum_{i=1}^sx_{it}^j\quad (1\le j\le k),$$
in which the sums $\sum_{i=1}^sx_{il}^{k+1}$ $(1\le l\le t)$ take distinct values. We have therefore shown that whenever
\begin{equation}\label{9.1}
J_{s,k}(X)>tJ_{s,k+1}(X),
\end{equation}
then $W(k,t)\le s$.\par

We seek to establish that for some positive number $\del$, one has
\begin{equation}\label{9.2}
J_{s,k+1}(X)\ll X^{2s-\frac{1}{2}k(k+1)-\del}.
\end{equation}
In view of the lower bound (\ref{1.5}), an estimate of this quality suffices to establish (\ref{9.1}). But from Theorem \ref{theorem1.1}, one has
$$J_{s,k+1}(X)\ll X^{2s-\frac{1}{2}(k+1)(k+2)+\eps}$$
whenever $s\ge (k+1)(k+2)$. Moreover, the estimate
$$J_{k+2,k+1}(X)\ll X^{k+2}$$
follows from \cite{VW1997}, and indeed earlier results would suffice here. By interpolating via H\"older's inequality, therefore, we find that when $s$ is an integer with $k+2\le s\le (k+1)(k+2)$, then
$$J_{s,k+1}(X)\ll X^{2s-\frac{1}{2}(k+1)(k+2)+\eta_s+\eps},$$
where
\begin{align*}
\eta_s&=((k+1)(k+2)-s)\left( \frac{{\textstyle \frac{1}{2}}(k+1)(k+2)-(k+2)}{(k+1)(k+2)-(k+2)}\right)\\
&={\textstyle \frac{1}{2}}(1-1/k)((k+1)(k+2)-s).
\end{align*}
It follows that the condition (\ref{9.2}) is satisfied whenever
$${\textstyle \frac{1}{2}}(1-1/k)((k+1)(k+2)-s)<k+1,$$
or equivalently,
$$(k+1)(k+2)-s<2k\left( \frac{k+1}{k-1}\right) =2k+4+\frac{4}{k-1}.$$
We deduce that (\ref{9.2}) holds whenever $s\ge (k+1)(k+2)-2k-4$, and hence $W(k,t)\le k^2+k-2$. This completes the proof of Theorem \ref{theorem1.3}.\par

There may be some scope for improvement in the upper bound presented in Theorem \ref{theorem1.3} by exploiting the sharpest bounds available from Vinogradov's mean value theorem for smaller moments (see \cite{Woo1992}, \cite{Woo1994}, \cite{For2002} and \cite{BK2010}). In this way, one might hope to improve even the coefficient of $k$ in the upper bound for $W(k,h)$, though not that of $k^2$.

\begin{proof}[The proof of Theorem \ref{theorem1.2}]
Let $s$ and $k$ be natural numbers with $k\ge 3$ and $s\ge k^2+k+1$, and let $X$ be a positive number sufficiently large in terms of $s$ and $k$. We follow the argument of the proof of \cite[Theorem 3]{Woo1996}. When $1\le q\le X^{1/k}$, $1\le a_j\le q$ $(1\le j\le k)$ and $(q,a_1,\ldots ,a_k)=1$, define the major arc $\grM(q,\bfa)$ by
$$\grM(q,\bfa)=\{ \bfalp \in [0,1)^k\,:\, |q\alp_j-a_j|\le X^{1/k-j}\ (1\le j\le k)\}.$$
It is not hard to check that the arcs $\grM(q,\bfa)$ are disjoint. Let $\grM$ denote the union of the major arcs $\grM(q,\bfa)$ with $q$ and $\bfa$ as above, and define the minor arcs $\grm$ by $\grm=[0,1)^k\setminus \grM$. Then from (\ref{1.2}) we have
\begin{equation}\label{9.3}
J_{s,k}(X)=\int_\grM |f(\bfalp;X)|^{2s}\d\bfalp +\int_\grm |f(\bfalp;X)|^{2s}\d\bfalp .
\end{equation}

\par We first bound the contribution of the minor arcs. As a consequence of Theorem \ref{theorem1.6}, one finds that
$$\sup_{\bfalp \in \grm}|f(\bfalp;X)|\le X^{1-\tau+\eps},$$
where $\tau^{-1}=4k(k-1)$. Then it follows from Theorem \ref{theorem1.1} that
\begin{align}
\int_\grm |f(\bfalp;X)|^{2s}\d\bfalp &\ll \left( \sup_{\bfalp \in \grm}|f(\bfalp;X)|\right)^{2s-2k^2-2k}\oint |f(\bfalp;X)|^{2k^2+2k}\d\bfalp \notag \\
&\ll (X^{1-\tau+\eps})^{2s-2k^2-2k}X^{\frac{3}{2}k(k+1)+\eps}\notag \\
&\ll X^{2s-\frac{1}{2}k(k+1)-1/(3k^2)}.\label{9.4}
\end{align}

\par Next we discuss the major arc contribution. When $\bfalp \in \grM(q,\bfa)\subseteq \grM$, write
$$V(\bfalp;q,\bfa)=q^{-1}S(q,\bfa)I(\bfalp-\bfa/q;X),$$
where
$$S(q,\bfa)=\sum_{r=1}^qe((a_1r+\dots +a_kr^k)/q)$$
and
$$I(\bfbet;X)=\int_0^Xe(\bet_1\gam+\dots +\bet_k\gam^k)\d\gam .$$
In addition, define the function $V(\bfalp)$ to be $V(\bfalp;q,\bfa)$ when $\bfalp\in \grM(q,\bfa)\subseteq \grM$, and to be zero otherwise. Then the argument concluding \cite[\S3]{Woo1996} shows that
\begin{align}
\int_\grM &|f(\bfalp;X)|^{2s}\d\bfalp -\int_\grM |V(\bfalp)|^{2s}\d\bfalp \notag \\
&\ll X^{1+2/k}\Bigl( \oint |f(\bfalp;X)|^{2s-2}\d\bfalp +\oint |V(\bfalp)|^{2s-2}\d\bfalp \Bigr) .\label{9.5}
\end{align}

\par When $\bfalp \in \grM(q,\bfa)\subseteq \grM$, one has $(q,a_1,\ldots ,a_k)=1$ and $|q\alp_j-a_j|\le X^{1/k-j}$ $(1\le j\le k)$. Then it follows from \cite[Theorems 7.1 and 7.3]{Vau1997} that when $\bfalp \in \grM(q,\bfa)\subseteq \grM$, one has
$$V(\bfalp)\ll Xq^\eps (q+|q\alp_1-a_1|X+\dots +|q\alp_k-a_k|X^k)^{-1/k}.$$
Consequently, one finds that when $t\ge \frac{1}{2}k(k+1)$, one has
$$\int_\grM |V(\bfalp)|^{2t}\d\bfalp \ll X^{2t}WZ,$$
where
$$W=\sum_{1\le q\le X^{1/k}}\sum_{a_1=1}^q\dots \sum_{a_k=1}^q(q^{\eps -1/k})^{2t}$$
and
$$Z=\prod_{j=1}^k\int_0^{X^{1/k-j}}(1+\bet_jX^j)^{-2t/k^2}\d \bet_j .$$
But since $2t\ge k(k+1)$, we obtain the upper bounds
\begin{equation}\label{9.6}
W\ll X^{1/(3k)}\sum_{q=1}^\infty q^{-5/4}\ll X^{1/(3k)}
\end{equation}
and
\begin{equation}\label{9.7}
Z\ll \prod_{j=1}^k\int_0^\infty (1+\bet_jX^j)^{-1-1/k}\d\bet_j \ll X^{-\frac{1}{2}k(k+1)}.
\end{equation}
Thus, in particular, we deduce that when $s\ge k^2+k+1$, then
$$\int_\grM |V(\bfalp)|^{2s-2}\d\bfalp \ll X^{2s-2-\frac{1}{2}k(k+1)+1/(3k)}.$$
In combination with Theorem \ref{theorem1.1}, this leads from (\ref{9.5}) to the asymptotic relation
\begin{equation}\label{9.8}
\int_\grM |f(\bfalp;X)|^{2s}\d\bfalp -\int_\grM |V(\bfalp)|^{2s}\d\bfalp \ll X^{2s-\frac{1}{2}k(k+1)-1/(3k)}.
\end{equation}

\par The argument employed in deriving (\ref{9.6}) and (\ref{9.7}) is readily adapted to show that the singular series $\grS(s,k)$ defined in (\ref{1.9b}), and the singular integral $\grJ(s,k)$ defined in (\ref{1.9c}), both converge absolutely, and that
$$\int_\grM |V(\bfalp)|^{2s}\d\bfalp =\grS(s,k)\grJ(s,k)+O(X^{2s-\frac{1}{2}k(k+1)-1/(3k)}).$$
The asymptotic formula claimed implicitly in Theorem \ref{theorem1.2} now follows by substituting (\ref{9.4}) and (\ref{9.8}) into (\ref{9.3}). This completes the proof of Theorem \ref{theorem1.2}.
\end{proof}

As essentially was observed by Vaughan, one must have both $\grS(s,k)\gg 1$ and $\grJ(s,k)\gg 1$ (see the conclusion of \cite[\S7.3]{Vau1997}). For otherwise one would have
$$\oint |f(\bfalp;X)|^{2s}\d\bfalp =o(X^{2s-\frac{1}{2}k(k+1)}),$$
which contradicts the elementary lower bound (\ref{1.5}).\par

An argument similar to that employed in the proof of Theorem \ref{theorem1.2} delivers an asymptotic formula for the number of solutions of a more general diagonal Diophantine system. When $s$ and $k$ are natural numbers, and $a_{ij}$ are integers for $1\le i\le k$ and $1\le j\le s$, we write
$$\phi_i(\bfx)=\sum_{j=1}^sa_{ij}x_j^i\quad (1\le i\le k),$$
and we consider the Diophantine system
\begin{equation}\label{9.A}
\phi_i(\bfx)=0\quad (1\le i\le k).
\end{equation}
We write $N(B)$ for the number of integral solutions of the system (\ref{9.A}) with $|\bfx|\le B$. We next define the (formal) real and $p$-adic densities associated with the system (\ref{9.A}), and here we follow Schmidt \cite{Sch1985}. When $L>0$, define
$$\lam_L(\eta)=\begin{cases} L(1-L|\eta|),&\text{when $|\eta|\le L^{-1}$,}\\
0,&\text{otherwise.}\end{cases}$$
We then put
$$\mu_L=\int_{|\bfxi|\le 1}\prod_{i=1}^k\lam_L(\phi_i(\bfxi))\d\bfxi .$$
The limit $\sig_\infty=\lim_{L\rightarrow \infty}\mu_L$, when it exists, is called the {\it real density}. Meanwhile, given a natural number $q$, we write
$$M(q)=\text{card}\{ \bfx\in (\dbZ/q\dbZ)^s\,:\, \phi_i(\bfx)\equiv 0\pmod{q}\ (1\le i\le k)\}.$$
For each prime number $p$, we then put
$$\sig_p=\lim_{H\rightarrow \infty}p^{H(k-s)}M(p^H),$$
provided that this limit exists, and refer to $\sig_p$ as the {\it $p$-adic density}. 

\begin{theorem}\label{theorem9.1} Let $s$ and $k$ be natural numbers with $k\ge 3$ and $s\ge 2k^2+2k+1$. Suppose that $a_{ij}$ $(1\le i\le k,\, 1\le j\le s)$ are non-zero integers. Suppose in addition that the system of equations (\ref{9.A}) possess non-singular real and $p$-adic solutions, for each prime number $p$. Then one has
$$N(B)\sim \sig_\infty \Bigl(\prod_p\sig_p\Bigr) B^{s-\frac{1}{2}k(k+1)}.$$
In particular, the system (\ref{9.A}) satisfies the Hasse Principle.
\end{theorem}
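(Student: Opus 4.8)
The plan is to run the Hardy--Littlewood circle method for the system \eqref{9.A}, with the key input being the sharp mean value bound of Theorem \ref{theorem1.1}, exactly as Theorem \ref{theorem1.2} was proved above. Write $g_i(\bfalp;B)=\sum_{|x|\le B}e(\alp_i a_{i1}x^i+\ldots)$—more precisely, for each $j$ form the exponential sum $h_j(\bfalp;B)=\sum_{|x|\le B}e(\sum_{i=1}^k\alp_i a_{ij}x^i)$, so that by orthogonality
$$
N(B)=\int_{[0,1)^k}\prod_{j=1}^s h_j(\bfalp;B)\,\d\bfalp .
$$
Partition $[0,1)^k$ into major arcs $\grM$ and minor arcs $\grm$ as in the proof of Theorem \ref{theorem1.2}, with $\grM(q,\bfa)$ defined by $|q\alp_i-a_i|\le B^{1/k-i}$ for $1\le i\le k$, $q\le B^{1/k}$. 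First I would dispose of the minor arcs: by Theorem \ref{theorem1.6} applied to each factor $h_j$ (after absorbing the fixed non-zero constant $a_{ij}$ into the approximation, which affects only implied constants), one has $\sup_{\bfalp\in\grm}|h_j(\bfalp;B)|\ll B^{1-\tau+\eps}$ with $\tau^{-1}=4k(k-1)$; then bound
$$
\int_\grm \Bigl|\prod_{j=1}^s h_j(\bfalp;B)\Bigr|\,\d\bfalp
\ll \Bigl(\sup_{\grm}|h_1|\Bigr)^{s-2k^2-2k}\int_{[0,1)^k}\prod_{j}|h_j|^{\text{(balanced exponents)}}\,\d\bfalp,
$$
and apply Hölder's inequality together with Theorem \ref{theorem1.1} (in the form $J_{k^2+k,k}(B)\ll B^{\frac32 k(k+1)+\eps}$) to the remaining $2k^2+2k$ factors. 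The hypothesis $s\ge 2k^2+2k+1$ gives a genuine power saving $B^{s-\frac12 k(k+1)-\del}$ for some $\del>0$, just as in \eqref{9.4}.

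For the major arcs, the standard pruning lemmas (as invoked in \cite[Theorems 7.1 and 7.3]{Vau1997}) give, for $\bfalp\in\grM(q,\bfa)$, the approximation $h_j(\bfalp;B)\approx q^{-1}S_j(q,\bfa)\,I_j(\bfalp-\bfa/q;B)$ with the usual bound $\ll Bq^\eps(q+\sum_i|q\alp_i-a_i|B^i)^{-1/k}$. Replacing each $h_j$ by its major-arc model one at a time, with the error at each stage controlled by a lower $(2s-2)$-th moment—bounded via Theorem \ref{theorem1.1} since $2s-2\ge 2k^2+2k$—one obtains
$$
\int_\grM \prod_j h_j\,\d\bfalp = \mathfrak{S}(B)\,\mathfrak{J}(B)+O(B^{s-\frac12 k(k+1)-\del}),
$$
where $\mathfrak{S}(B)=\sum_{q\le B^{1/k}}\sum_{\bfa}q^{-s}\prod_j S_j(q,\bfa)$ and $\mathfrak{J}(B)$ is the corresponding singular integral over the dilated major arc. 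The convergence of the completed singular series $\prod_p\sig_p$ and singular integral $\sig_\infty$, together with the estimates $\mathfrak{S}(B)=\prod_p\sig_p+O(B^{-\del'})$ and $B^{-s+\frac12 k(k+1)}\mathfrak{J}(B)=\sig_\infty+O(B^{-\del'})$, follow from the same $q^{-1-1/k}$-type decay exploited in \eqref{9.6}--\eqref{9.7}, using $2s/k^2\ge k+1$; the identification of $\prod_p\sig_p$ with the $p$-adic densities $\sig_p$ and of $\sig_\infty$ with the real density is the routine singular-series-factorisation argument of \cite{Sch1985}. Finally, the hypothesis that \eqref{9.A} has non-singular real and $p$-adic solutions forces $\sig_\infty>0$ and $\sig_p>0$ for every $p$, with $\prod_p\sig_p$ bounded away from zero, so the main term genuinely dominates; combining the major and minor arc contributions yields the claimed asymptotic, and in particular $N(B)\to\infty$, establishing the Hasse Principle.

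The main obstacle I anticipate is the major-arc pruning in the presence of $k$ simultaneously varying coefficients $\alp_1,\ldots,\alp_k$ of differing "scales" $B^1,\ldots,B^k$: one must successively swap $h_j\rightsquigarrow V_j$ while keeping each error term genuinely below $B^{s-\frac12 k(k+1)}$, and this requires the lower-moment estimate $J_{s-1,k}(B)\ll B^{2(s-1)-\frac12 k(k+1)+\eps}$ to hold with $s-1\ge k^2+k$—which is exactly why the threshold is pushed up to $2k^2+2k+1$ rather than the $k^2+k+1$ of Theorem \ref{theorem1.2} (here we carry $s$ distinct linear forms rather than a single $2s$-th power, so the even-moment symmetry that saved a factor in Theorem \ref{theorem1.2} is unavailable). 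Everything else—the minor-arc bound, the singular series and integral analysis, the positivity from non-singular local solubility—is a mechanical adaptation of \cite[\S\S7.2--7.3]{Vau1997}, \cite{Sch1985}, and the proof of Theorem \ref{theorem1.2} given above, so I would keep the write-up correspondingly brief.
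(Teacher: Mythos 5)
Your proposal is correct and takes essentially the route the paper indicates: the paper offers no details for Theorem \ref{theorem9.1}, stating only that ``the argument follow[s] in most respects that of the proof of Theorem \ref{theorem1.2}'', which is precisely what you have done. Your diagnosis of why the threshold rises from $k^2+k+1$ to $2k^2+2k+1$ --- that one has $s$ distinct generating functions $h_j$ rather than a $2s$-th even moment of a single sum, so that after peeling off one factor on the minor arcs one needs $s-1\ge 2k(k+1)$ to invoke Theorem \ref{theorem1.1} via H\"older --- is exactly the right reason, and it is the one the paper leaves implicit. Two minor technical points you should be aware of, both mentioned in the paper's accompanying remark: (i) the variety cut out by (\ref{9.A}) can be singular, but its singular locus has affine dimension at most $k-1$, which makes this harmless; (ii) the nonvanishing of all $a_{ij}$ is used (in your argument, to pass minor-arc bounds from $f_k(\bfgam_j;B)$ back to $\bfalp$), though the paper notes this restriction can be largely removed with more work. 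With those caveats noted, your write-up is a faithful expansion of the paper's sketch.
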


We will not offer any details of the proof here, the argument following in most respects that of the proof of Theorem \ref{theorem1.2}. We note only that the system (\ref{9.A}), if singular, is easily shown to have a singular locus of affine dimension at most $k-1$, which is harmless in the analysis. We note also that the restriction that $a_{ij}\ne 0$ $(1\le i\le k,\, 1\le j\le s)$ may be largely removed by elaborating on the basic argument. We emphasise that the most striking feature of Theorem \ref{theorem9.1} is that such a conclusion cannot possibly hold when $s<\frac{1}{2}k(k+1)$. Thus, for the very first time for a system of diagonal equations of higher degree, we have an asymptotic formula in which the number of variables is just four times the best possible result. Hitherto, the number of variables required to achieve a successful analysis would be roughly $2\log k$ times the best possible result, a factor which becomes arbitrarily large as $k$ increases.\par

We turn our attention next to the Hilbert-Kamke problem, a generalisation of Waring's problem considered first by Hilbert \cite{Hil1909}. When $n_1,\ldots,n_k$ are natural numbers, let $R_{s,k}(\bfn)$ denote the number of solutions in natural numbers $\bfx$ of the system of equations
\begin{equation}\label{9.C}
\sum_{i=1}^sx_i^j=n_j\quad (1\le j\le k).
\end{equation}
Put
$$X=\max_{1\le j\le k}n_j^{1/j},$$
and then write
$$\calJ_{s,k}(\bfn)=\int_{\dbR^k}I(\bfbet;1)^se(-\bet_1n_1/X-\dots -\bet_kn_k/X^k)\d\bfbet$$
and
$$\calS_{s,k}(\bfn)=\sum_{q=1}^\infty \sum_{\substack{1\le \bfa\le q\\ (q,a_1,\ldots ,a_k)=1}}(q^{-1}S(q,\bfa))^se(-(a_1n_1+\dots +a_kn_k)/q).$$
The local solubility conditions associated with the system (\ref{9.C}) are quite subtle, and we refer the reader to \cite{Ark1984} for a discussion of the conditions under which real and $p$-adic solutions may be expected to exist for the system (\ref{9.C}). It is easy to see, however, that the conditions
$$n_k^{j/k}\le n_j\le s^{1-j/k}n_k^{j/k}\quad (1\le j\le k),$$
are needed. One also finds that $p$-adic solubility is not assured without at least $2^k$ variables.

\begin{theorem}\label{theorem9.2}
Let $s$ and $k$ be natural numbers with $k\ge 3$ and $s\ge 2k^2+2k+1$. Suppose that the natural numbers $n_1,\ldots ,n_k$ are sufficiently large in terms of $s$ and $k$. Put $X=\max_{1\le j\le k}n_j^{1/j}$. Suppose in addition that the system (\ref{9.C}) has non-singular real and $p$-adic solutions. Then one has
$$R_{s,k}(\bfn)\sim \calJ_{s,k}(\bfn)\calS_{s,k}(\bfn)X^{s-\frac{1}{2}k(k+1)}.$$
\end{theorem}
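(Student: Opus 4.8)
The plan is to run the Hardy--Littlewood circle method in the shape already used for Theorem \ref{theorem1.2}, with the estimates of Theorems \ref{theorem1.1} and \ref{theorem1.6} supplying the input that was classically taken from Vinogradov's mean value theorem. Since $x_i^j\le n_j$ forces $1\le x_i\le X$, orthogonality gives
$$R_{s,k}(\bfn)=\int_{[0,1)^k}f_k(\bfalp;X)^se(-\alp_1n_1-\dots-\alp_kn_k)\d\bfalp,$$
and one dissects $[0,1)^k$ into the major arcs $\grM$, taken to be the union over $1\le q\le X^{1/k}$ and $1\le\bfa\le q$ with $(q,a_1,\dots,a_k)=1$ of the boxes $\grM(q,\bfa)=\{\bfalp:|q\alp_j-a_j|\le X^{1/k-j}\ (1\le j\le k)\}$, together with the complementary minor arcs $\grm$, exactly as in the proof of Theorem \ref{theorem1.2}.

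For the minor arcs, Theorem \ref{theorem1.6} gives $\sup_{\bfalp\in\grm}|f_k(\bfalp;X)|\ll X^{1-\tau+\eps}$ with $\tau^{-1}=4k(k-1)$. Extracting $2k^2+2k$ factors of $|f_k(\bfalp;X)|$ and invoking Theorem \ref{theorem1.1} in the case $s=k(k+1)$ to bound $\oint|f_k(\bfalp;X)|^{2k^2+2k}\d\bfalp\ll X^{\frac{3}{2}k(k+1)+\eps}$, one finds that
$$\int_\grm f_k(\bfalp;X)^se(-\alp_1n_1-\dots-\alp_kn_k)\d\bfalp\ll(X^{1-\tau+\eps})^{s-2k^2-2k}X^{\frac{3}{2}k(k+1)+\eps}\ll X^{s-\frac{1}{2}k(k+1)-\del}$$
for some $\del>0$, the last bound holding precisely because $s\ge 2k^2+2k+1$. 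This is the same arithmetic that underlies (\ref{9.4}), and $s\ge 2k^2+2k+1$ is the only place the hypothesis on $s$ is used.

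On the major arcs one prunes by the standard multi-stage argument of the circle method (compare \cite[Chapter 4]{Vau1997} and the conclusion of \cite[\S3]{Woo1996}), approximating $f_k(\bfalp;X)$ on $\grM(q,\bfa)$ by $V(\bfalp)=q^{-1}S(q,\bfa)I(\bfalp-\bfa/q;X)$, with $S(q,\bfa)$ and $I(\bfbet;X)$ as in the proof of Theorem \ref{theorem1.2}. The control of the pruning error and of the thin portions of $\grM$ requires mean value bounds for $f_k$ of order up to $2s-2$, all within the scope of Theorem \ref{theorem1.1} since $2s-2\ge 2k^2+2k$, together with the elementary bounds on $V$ supplied by \cite[Theorems 7.1--7.3]{Vau1997} (the very estimates behind (\ref{9.6}) and (\ref{9.7})); a margin of the shape $X^{-1+2/k+\eps}$ enters exactly as in Theorem \ref{theorem1.2}, whence the hypothesis $k\ge 3$. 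Those same estimates show that $\calS_{s,k}(\bfn)$ and $\calJ_{s,k}(\bfn)$ converge absolutely and, after the changes of variable $\gam=X\nu$ and $\bet_j\mapsto X^{-j}\bet_j$, that
$$\int_\grM V(\bfalp)^se(-\alp_1n_1-\dots-\alp_kn_k)\d\bfalp=\calJ_{s,k}(\bfn)\calS_{s,k}(\bfn)X^{s-\frac{1}{2}k(k+1)}+O(X^{s-\frac{1}{2}k(k+1)-\del}),$$
the only new feature relative to Theorem \ref{theorem1.2} being the harmless linear twists $e(-\bfa\cdot\bfn/q)$ and $e(-\bet_1n_1/X-\dots-\bet_kn_k/X^k)$.

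The genuinely delicate point --- and the main obstacle --- is to check that $\calS_{s,k}(\bfn)\gg 1$ and $\calJ_{s,k}(\bfn)\gg 1$ uniformly for the $\bfn$ in question, so that the major arc main term really dominates. The local solubility theory for the Hilbert--Kamke system (\ref{9.C}) is substantially more intricate than for Waring's problem: the constraints $n_k^{j/k}\le n_j\le s^{1-j/k}n_k^{j/k}$ $(1\le j\le k)$ are necessary, and to pass from the hypothesised non-singular real and $p$-adic solutions, together with the largeness of the $n_j$, to the required lower bounds for the singular integral and singular series --- in particular the exclusion of anomalously large prime-power obstructions, which is why $2^k$ variables are needed even for bare local solubility --- one must invoke the analysis of Arkhipov \cite{Ark1984}. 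Granting this, assembling the minor and major arc contributions yields the asserted asymptotic formula.
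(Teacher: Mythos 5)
Your sketch is correct and follows the approach the paper intends. In fact, the paper supplies no proof at all for Theorem \ref{theorem9.2}: it states the result and refers the reader to Arkhipov \cite{Ark1984} and Mit'kin \cite{Mit1986}, \cite{Mit1987} for the circle-method details, remarking only that Theorem \ref{theorem1.1} is what permits the constraint $s\ge 2k^2+2k+1$ in place of $(4+o(1))k^2\log k$; your outline correctly fills in that skeleton, with Theorems \ref{theorem1.1} and \ref{theorem1.6} entering on the minor arcs exactly where the old mean-value and Weyl inputs used to, with the pruning on $\grM$ handled as in the proof of Theorem \ref{theorem1.2}, and with the one genuinely delicate point --- uniform lower bounds for $\calS_{s,k}(\bfn)$ and $\calJ_{s,k}(\bfn)$ under the non-singular local solubility hypotheses --- rightly flagged as the place where Arkhipov's local analysis must still be cited.
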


We refer the reader to \cite{Ark1984}, \cite{Mit1986}, \cite{Mit1987} for the many details associated with a successful treatment of this problem. The technology available at the time of writing of the latter papers made necessary the constraint $s\ge (4+o(1))k^2\log k$ in place of the lower bound $s\ge 2k^2+2k+1$ in Theorem \ref{theorem9.2}. Our observation here is that a successful local-global analysis is now available via the circle method when the number of variables grows like $2k^2+2k+1$, only a factor of $4$ away from what is likely to be best possible.

\section{The asymptotic formula in Waring's problem} The proof of Theorem \ref{theorem1.4} would be routine were our goal the less precise bound $\Gtil(k)\le 2k^2+2k+1$. Saving four additional variables requires some discussion which hints at possible new strategies for transforming estimates for $J_{s,k}(X)$ into upper bounds for $\Gtil(k)$. En route we also improve some old estimates of Hua \cite{Hua1965}.\par

Write
$$g(\alp)=\sum_{1\le x\le X}e(\alp x^k),$$
and when $s\in \dbN$, define
$$I_s(X)=\int_0^1|g(\alp)|^{2s}\d\alp .$$
Then on considering the underlying Diophantine system, one has
\begin{align*}
I_s(X)&=\sum_{|h_1|\le sX}\dots \sum_{|h_{k-1}|\le sX^{k-1}}\oint |f(\bfalp;X)|^{2s}e(-h_1\alp_1-\dots -h_{k-1}\alp_{k-1})\d\bfalp \\
&\ll X^{\frac{1}{2}k(k-1)}\oint |f(\bfalp;X)|^{2s}\d\bfalp =X^{\frac{1}{2}k(k-1)}J_{s,k}(X).
\end{align*}
Thus we obtain the classical bound
\begin{equation}\label{10.1}
I_s(X)\ll X^{2s-k+\eta_s+\eps}.
\end{equation}

\par Ford \cite{For1995} obtained a bound potentially sharper, valid for each natural number $m$ with $1\le m\le k$, and $s\ge \frac{1}{2}m(m-1)$, which is tantamount to
$$I_s(X)\ll X^{2s-k+\eta_{s,m}^*+\eps},$$
where $\eta_{s,m}^*=\frac{1}{m}\eta_{s-\frac{1}{2}m(m-1)}$. A little later, this conclusion was obtained independently by Ustinov \cite{Ust1998}. Owing to the efficiency of Theorem \ref{theorem1.1}, this estimate proves to be no sharper than that provided by (\ref{10.1}), at least in applications to the asymptotic formula in Waring's problem. Instead we offer a very modest refinement of (\ref{10.1}). The idea underlying this refinement is related to one first shown to the author by Bob Vaughan in the first year of the author's Ph.D. studies, in 1988.

\begin{lemma}\label{lemma10.1}
For each natural number $s$, one has
$$I_s(X)\ll X^\eps (X^{2s-k-1+\eta_{s,k}}+X^{2s-k+\eta_{s,k-1}}).$$
\end{lemma}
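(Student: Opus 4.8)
The plan is to classify the solutions counted by $I_s(X)$ by the size of their linear discrepancy, handling separately those for which this discrepancy vanishes. Recall that $I_s(X)$ counts the integral solutions of $x_1^k+\ldots+x_s^k=y_1^k+\ldots+y_s^k$ with $1\le x_i,y_i\le X$. For such a solution put $h=(x_1+\ldots+x_s)-(y_1+\ldots+y_s)$, so that $|h|\le sX$, and write $C(h)$ for the number of these solutions having the prescribed value of $h$. Then $I_s(X)=\sum_{|h|\le sX}C(h)$, and by orthogonality $C(h)=\int_0^1\int_0^1|G(\alp,\bet)|^{2s}e(-h\alp)\,\d\alp\,\d\bet$, where $G(\alp,\bet)=\sum_{1\le x\le X}e(\alp x+\bet x^k)$; in particular $0\le C(h)\le C(0)$ for every $h$.

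The diagonal term $h=0$ is the source of the first bound. Here $C(0)$ counts those solutions which in addition satisfy $x_1+\ldots+x_s=y_1+\ldots+y_s$. Completing the power-sum differences of the intermediate degrees — that is, writing $n_j=\sum_i(x_i^j-y_i^j)$ for $2\le j\le k-1$, each of which lies in an interval of length $\ll X^j$ — invoking the trivial bound $\left|\oint|f_k(\bfalp;X)|^{2s}e(-n_2\alp_2-\ldots-n_{k-1}\alp_{k-1})\,\d\bfalp\right|\le J_{s,k}(X)$, and summing over the $\ll X^{\frac12 k(k-1)-1}$ admissible tuples $(n_2,\ldots,n_{k-1})$, one arrives at
\[ C(0)\ll X^{\frac12 k(k-1)-1+\eps}J_{s,k}(X)\ll X^{2s-k-1+\eta_{s,k}+\eps}, \]
the last step being nothing more than the definition of $\eta_{s,k}$. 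This delivers the first term of the lemma.

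The remaining task is to bound $\sum_{1\le|h|\le sX}C(h)$ by $\ll X^{2s-k+\eta_{s,k-1}+\eps}$, and this I expect to be the crux. The guiding idea must be that, when $h\ne 0$, the tuple $\bfx$ cannot be a rearrangement of $\bfy$, so that one should be able to capitalise on the non-triviality of the linear relation $\sum_i(x_i-y_i)=h$ in order to replace the single equation of degree $k$ by the full Vinogradov system of degree $k-1$: via the translation--dilation invariance of that system, together with the elimination of one pair of variables that the linear relation affords, one aims to show that $C(h)\ll X^{\frac12 k(k-1)-1-k+\eps}J_{s,k-1}(X)=X^{2s-k-1+\eta_{s,k-1}+\eps}$ for each fixed $h\ne0$, whereupon summation over the $\ll X$ values of $h$ yields the second term. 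The delicate part — and the point I expect to demand genuine care rather than mere bookkeeping — is to carry out the elimination without leaving behind a residual degree-$k$ congruence whose completion would reintroduce a factor $X^k$ and negate the saving. Beyond this, the only property of $J_{s,k}(X)$ and $J_{s,k-1}(X)$ that enters is that they are $\ll X^{2s-\frac12 k(k+1)+\eta_{s,k}+\eps}$ and $\ll X^{2s-\frac12(k-1)k+\eta_{s,k-1}+\eps}$ respectively, which is the content of the definitions of $\eta_{s,k}$ and $\eta_{s,k-1}$, while in the sporadic ranges of small $s$ for which $\eta_{s,k}>k+1$ the asserted conclusion is already weaker than the trivial estimate $I_s(X)\le X^{2s}$.
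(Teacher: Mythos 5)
Your treatment of the term $h=0$ is sound, and indeed the calculation $C(0)\ll X^{\frac{1}{2}k(k-1)-1}J_{s,k}(X)\ll X^{2s-k-1+\eta_{s,k}+\eps}$ exactly parallels a step in the paper's argument. But everything else in the proposal is a gap: for $h\ne 0$ you announce that one ``aims to show'' $C(h)\ll X^{2s-k-1+\eta_{s,k-1}+\eps}$, correctly identify this as the crux, and then stop. There is no argument, and I do not believe the decomposition by the \emph{linear} discrepancy $h=\sum_i(x_i-y_i)$ can be made to work. The two constraints $\sum_i(x_i^k-y_i^k)=0$ and $\sum_i(x_i-y_i)=h$ are \emph{each} invariant under a common shift $x_i\mapsto x_i-z$, $y_i\mapsto y_i-z$, so the translation--dilation invariance you invoke gives nothing new; and eliminating a variable via the linear relation leaves a genuine degree-$k$ polynomial equation in $2s-1$ variables whose solution count is no easier to control than $I_s(X)$ itself. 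The trivial bound $C(h)\le C(0)$ returns $\eta_{s,k}$, not $\eta_{s,k-1}$, and you have no mechanism to improve it.

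The paper's proof hinges on a different decomposition, chosen precisely so that a shift \emph{does} produce useful coupling. One works not with $I_s(X)$ directly but with $\oint|F_k(\bfbet;X)|^{2s}\d\bfbet$, where $F_k$ omits the monomial of degree $k-1$, and decomposes this by the degree-$(k-1)$ discrepancy $h=\sum_i(x_i^{k-1}-y_i^{k-1})$, with the discrepancies at degrees $1,\ldots,k-2$ and $k$ all pinned to zero. In that configuration, shifting by $z$ sends the degree-$k$ constraint to $\sum_i(x_i^k-y_i^k)=khz$ --- a \emph{linear} function of $z$, because only the degree-$(k-1)$ discrepancy is nonzero below degree $k$. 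Averaging over $1\le z\le X$ and summing over $h$ then produces the kernel $\Psi(\alp_k,\alp_{k-1})=\sum_z\min\{X^{k-1},\|kz\alp_k+\alp_{k-1}\|^{-1}\}$, which is estimated on major and minor arcs for $\alp_k$ and combined with Br\"udern's lemma to extract the $J_{s,k-1}$ term. In your degree-one decomposition, the shifted degree-$k$ constraint would involve all the unknown intermediate discrepancies $n_2,\ldots,n_{k-1}$ as coefficients of a degree-$(k-1)$ polynomial in $z$, and the whole mechanism collapses. You would need to switch to the degree-$(k-1)$ discrepancy (equivalently, work with the pruned exponential sum $F_k$), and then still supply the shift, the estimate for $\Psi$, the transference principle, and the invocation of Br\"udern's lemma, none of which appear in the proposal.
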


\begin{proof} Define the exponential sum $F(\bfbet)=F_k(\bfbet;X)$ by
$$F(\bfbet)=\sum_{1\le x\le X}e(\bet_kx^k+\bet_{k-2}x^{k-2}+\dots +\bet_1x).$$
Thus, to be precise, the argument of the exponentials in $F(\bfbet)$ is a polynomial of degree $k$ in which the coefficient of the monomial of degree $k-1$ is zero. Also, define $\Ups_k(X;h)$ to be the number of integral solutions of the Diophantine system
\begin{align}
\sum_{i=1}^s(x_i^j-y_i^j)&=0\quad (1\le j\le k,\, j\ne k-1),\notag \\
\sum_{i=1}^s(x_i^{k-1}-y_i^{k-1})&=h,\label{10.2}
\end{align}
with $1\le \bfx,\bfy\le X$. Then on considering the underlying Diophantine system, one finds that
\begin{equation}\label{10.3}
\oint |F(\bfbet)|^{2s}\,\d\bfbet =\sum_{|h|\le sX^{k-1}}\Ups_k(X;h).
\end{equation}

\par By applying an integer shift $z$ to the variables in the system (\ref{10.2}), we find that $\Ups_k(X;h)$ counts the number of integral solutions of the Diophantine system
\begin{align*}
\sum_{i=1}^s((x_i-z)^j-(y_i-z)^j)&=0\quad (1\le j\le k,\, j\ne k-1),\\
\sum_{i=1}^s((x_i-z)^{k-1}-(y_i-z)^{k-1})&=h,
\end{align*}
with $1+z\le \bfx,\bfy\le X+z$. But by applying the Binomial Theorem, we find that $\bfx,\bfy$ satisfies this system of equations if and only if
\begin{align}
\sum_{i=1}^s(x_i^j-y_i^j)&=0\quad (1\le j\le k-2) \notag\\
\sum_{i=1}^s(x_i^{k-1}-y_i^{k-1})&=h,\label{10.4} \\
\sum_{i=1}^s(x_i^k-y_i^k)&=khz.\notag
\end{align}
If we restrict the shifts $z$ to lie in the interval $1\le z\le X$, then we see that an upper bound for $\Ups_k(X;h)$ is given by the number of integral solutions of the system (\ref{10.4}) with $1\le \bfx,\bfy\le 2X$. On considering the underlying Diophantine system, we therefore deduce from (\ref{10.3}) that
\begin{align}
\oint |F(\bfbet)|^{2s}\d\bfbet &\le \sum_{|h|\le sX^{k-1}}\oint |f(\bfalp ;2X)|^{2s}e(-(kz\alp_k+\alp_{k-1})h)\d\bfalp \notag \\
&\ll X^{-1}\sum_{1\le z\le X}\oint |f(\bfalp;2X)|^{2s}\min \{ X^{k-1},\| kz\alp_k+\alp_{k-1}\|^{-1}\} \d\bfalp \notag \\
&=X^{-1}\oint |f(\bfalp ;2X)|^{2s}\Psi(\alp_k,\alp_{k-1})\d\bfalp ,\label{10.5}
\end{align}
where we have written
$$\Psi(\alp_k,\alp_{k-1})=\sum_{1\le z\le X}\min \{ X^{k-1},\|kz\alp_k+\alp_{k-1}\|^{-1}\} .$$

\par Suppose that $\alp_k\in\dbR$, and that $b\in \dbZ$ and $r\in \dbN$ satisfy $(b,r)=1$ and $|\alp_k-b/r|\le r^{-2}$. Then it follows from \cite[Lemma 3.2]{Bak1986}\footnote{We note that the strict inequality $|\alp_k-b/r|<r^{-2}$ imposed by Baker is unnecessary in the proof of \cite[Lemma 3.2]{Bak1986}} that
\begin{align}
\Psi(\alp_k,\alp_{k-1})&\ll (X^{k-1}+r\log (2r))(X/r+1)\notag \\
&\ll X^k(X^{-1}+r^{-1}+rX^{-k})(\log (2r)).\label{10.7}
\end{align}
Applying a standard transference principle (compare Exercise 2 of \cite[\S2.8]{Vau1997}), it follows that
\begin{equation}\label{10.8}
\Psi(\alp_k,\alp_{k-1})\ll X^{k+\eps}(X^{-1}+(r+X^k|r\alp_k-b|)^{-1}+(r+X^k|r\alp_k-b|)X^{-k}).
\end{equation}

\par We now return to consider the relation (\ref{10.5}). Let $\grm$ denote the set of real numbers $\alp\in [0,1)$ having the property that whenever $q\in \dbN$ and $\|q\alp\|\le X^{1-k}$, then $q>X$. Also, let $\grM$ denote the complementary set $[0,1)\setminus \grm$. By Dirichlet's theorem on Diophantine approximation, whenever $\alp_k\in \grm$, there exists $q\in \dbN$ with $q\le X^{k-1}$ such that $\|q\alp \|\le X^{1-k}$. From the definition of $\grm$, one must have $q>X$, and hence it follows from (\ref{10.7}) that
$$\sup_{\alp_k\in \grm}\Psi(\alp_k,\alp_{k-1})\ll X^{k-1+\eps}.$$
Thus we deduce from (\ref{1.2}) that
\begin{align*}
\int_{\grm\times [0,1)^{k-1}}|f(\bfalp;2X)|^{2s}\Psi(\alp_k,\alp_{k-1})\d\bfalp &\ll X^{k-1+\eps}\oint |f(\bfalp;2X)|^{2s}\d\bfalp \\
&\ll X^{k-1+\eps}J_{s,k}(2X).
\end{align*}
Substituting this conclusion into (\ref{10.5}), we see that
\begin{align}
\oint |F(\bfbet)|^{2s}\d\bfbet \ll &\,X^{k-2+\eps}J_{s,k}(2X) \notag\\
&\,+X^{-1}\int_{\grM\times [0,1)^{k-1}}|f(\bfalp ;2X)|^{2s}\Psi(\alp_k,\alp_{k-1})\d\bfalp .\label{10.9}
\end{align}

Let $\grM(q,a)$ denote the set of real numbers $\alp_k\in [0,1)$ with $|q\alp_k-a|\le X^{1-k}$. Then $\grM$ is the union of the sets $\grM(q,a)$ with $0\le a\le q\le X$ and $(a,q)=1$. From (\ref{10.8}) it follows that when $\alp_k\in \grM(q,a)\subseteq \grM$, one has
$$\Psi (\alp_k,\alp_{k-1})\ll X^{k-1+\eps}+X^{k+\eps}(q+X^k|q\alp_k-a|)^{-1}.$$
Define the function $\Phi(\tet)$ for $\tet\in \grM$ by putting
$$\Phi(\tet)=(q+X^k|q\tet-a|)^{-1}$$
when $\tet\in \grM(q,a)\subseteq \grM$. Then we deduce from (\ref{10.9}) that
\begin{equation}\label{10.10}
\oint |F(\bfbet)|^{2s}\d\bfbet \ll X^{k-2+\eps}J_{s,k}(2X)+X^{k-1+\eps}\calT,
\end{equation}
where
$$\calT=\int_\grM \Phi (\alp_k)\oint |f(\bfbet,\alp_k;2X)|^{2s}\d\bfbet \d\alp_k.$$
From Br\"udern \cite[Lemma 2]{Bru1988}, we find that
\begin{align*}
\int_\grM \Phi(\alp_k)&|f(\bfbet,\alp_k;2X)|^{2s}\d\alp_k \\
&\ll X^{\eps-k}\Bigl( X\int_0^1|f(\bfbet,\alp_k;2X)|^{2s}\d\alp_k+|f(\bfbet,0;2X)|^{2s}\Bigr) ,
\end{align*}
and hence
\begin{align*}
\calT&\ll X^{\eps-k}\Bigl( X\oint |f_k(\bfalp;2X)|^{2s}\d\bfalp +\oint |f_{k-1}(\bfbet ;2X)|^{2s}\d\bfbet \Bigr)\\
&\ll X^{\eps-k}(XJ_{s,k}(2X)+J_{s,k-1}(2X)).
\end{align*}
Consequently, from (\ref{10.10}) we conclude that
\begin{equation}\label{10.11}
\oint |F(\bfbet)|^{2s}\d\bfbet \ll X^{k-2+\eps}J_{s,k}(2X)+X^{\eps-1}J_{s,k-1}(2X).
\end{equation}

\par Next we observe that, on considering the underlying Diophantine system, one has
$$I_s(X)=\sum_{|h_1|\le sX}\dots \sum_{|h_{k-2}|\le sX^{k-2}}R(X;\bfh),$$
where $R(X;\bfh)$ denotes the number of integral solutions of the system
\begin{align*}
\sum_{i=1}^s(x_i^j-y_i^j)&=h_j\quad (1\le j\le k-2)\\
\sum_{i=1}^s(x_i^k-y_i^k)&=0,
\end{align*}
with $1\le \bfx,\bfy\le X$. Thus, again considering the underlying Diophantine system, we obtain the upper bound
\begin{align*}
I_s(X)&\ll \sum_{|h_1|\le sX}\dots \sum_{|h_{k-2}|\le sX^{k-2}}\oint |F(\bfbet)|^{2s}e(-\bet_1h_1-\dots -\bet_{k-2}h_{k-2})\d\bfbet \\
&\ll X^{\frac{1}{2}(k-1)(k-2)}\oint |F(\bfbet)|^{2s}\d\bfbet .
\end{align*}
In view of (\ref{10.11}), we therefore arrive at the estimate
\begin{align*}
I_s(X)&\ll X^{\frac{1}{2}(k+1)(k-2)+\eps}J_{s,k}(2X)+X^{\frac{1}{2}(k-1)(k-2)-1+\eps}J_{s,k-1}(2X)\\
&\ll X^{2s-k-1+\eta_{s,k}+\eps}+X^{2s-k+\eta_{s,k-1}+\eps}.
\end{align*}
This completes the proof of the lemma.
\end{proof}

From Theorem \ref{theorem1.1}, we have $\eta_{s,k-1}=0$ for $s\ge k(k-1)$. By H\"older's inequality, moreover, one finds from Theorem \ref{theorem1.1} that
\begin{align*}
\oint |f_k(\bfalp;X)|^{2k^2+2k-4}\d\bfalp &\le \Bigl( \oint |f(\bfalp;X)|^{2k^2+2k}\d\bfalp \Bigr)^{1-2/(k^2+k)}\\
&\ll \left( X^{\frac{3}{2}(k^2+k)+\eps}\right)^{1-2/(k^2+k)}\\
&\ll X^{\frac{3}{2}(k^2+k)-3+\eps}.
\end{align*}
Consequently, one has $\eta_{s,k}\le 1$ for $s\ge k^2+k-2$. Then by Lemma \ref{lemma10.1}, we obtain the following corollary to Lemma \ref{lemma10.1}.

\begin{corollary}\label{corollary10.2}
When $s\ge k^2+k-2$, one has $I_s(X)\ll X^{2s-k+\eps}$.
\end{corollary}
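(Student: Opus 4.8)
The plan is simply to feed into Lemma~\ref{lemma10.1} the two bounds on the exponents $\eta$ recorded in the discussion immediately preceding the corollary. There we already have $\eta_{s,k-1}=0$ for $s\ge k(k-1)$ (this is Theorem~\ref{theorem1.1} applied with $k-1$ in place of $k$), and $\eta_{s,k}\le 1$ for $s\ge k^2+k-2$ (obtained by H\"older-interpolating $\oint |f_k(\bfalp;X)|^{2(k^2+k-2)}\d\bfalp$ on $[0,1)^k$, a set of measure $1$, between the trivial $L^0$ bound and the estimate $\oint |f_k(\bfalp;X)|^{2k^2+2k}\d\bfalp\ll X^{\frac32 k(k+1)+\eps}$ of Theorem~\ref{theorem1.1}, which gives $\oint |f_k(\bfalp;X)|^{2(k^2+k-2)}\d\bfalp\ll X^{\frac32 k(k+1)-3+\eps}$ and hence $\eta_{k^2+k-2,k}\le 1$). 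Since $k^2+k-2\ge k(k-1)$ for every $k\ge 2$, both facts hold throughout the range $s\ge k^2+k-2$; and the bound $\eta_{s,k}\le 1$ persists for all larger $s$ by the trivial monotonicity $\eta_{s',k}\le\eta_{s,k}$ for $s'\ge s$, which follows from $J_{s',k}(X)\le X^{2(s'-s)}J_{s,k}(X)$.

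With these in hand, Lemma~\ref{lemma10.1} yields
$$I_s(X)\ll X^\eps\bigl(X^{2s-k-1+\eta_{s,k}}+X^{2s-k+\eta_{s,k-1}}\bigr)\ll X^\eps\bigl(X^{2s-k-1+1}+X^{2s-k}\bigr)\ll X^{2s-k+\eps},$$
which is precisely the assertion of the corollary.

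There is essentially no obstacle here: the argument is pure bookkeeping built on top of Lemma~\ref{lemma10.1}. The only points deserving a second glance are the numerical check that the exponent $\frac32 k(k+1)-3$ emerging from the H\"older step is exactly one more than $2(k^2+k-2)-\frac12 k(k+1)$ — so that the admissible loss in $\eta_{s,k}$ is precisely $1$, just enough to be absorbed by the favourable factor $X^{-1}$ multiplying $J_{s,k}$ in the first term of Lemma~\ref{lemma10.1} — and the compatibility of the two thresholds $k(k-1)$ and $k^2+k-2$ so that both $\eta$-bounds are simultaneously available.
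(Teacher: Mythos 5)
Your proposal is correct and follows exactly the route the paper takes: combine $\eta_{s,k-1}=0$ (Theorem \ref{theorem1.1} with $k-1$), the H\"older interpolation giving $\eta_{s,k}\le 1$ for $s\ge k^2+k-2$, and Lemma \ref{lemma10.1}. The only addition is your explicit mention of the monotonicity $\eta_{s',k}\le\eta_{s,k}$, which the paper uses implicitly; this is a harmless and correct bit of bookkeeping.
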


Having prepared the ground, the proof of Theorem \ref{theorem1.4} is now swift. Consider a large integer $n$, put $X=[n^{1/k}]$, and recall the definition of the sets of arcs $\grm$ and $\grM$ from the proof of Lemma \ref{lemma10.1}. From Corollary \ref{corollary10.2} and Weyl's inequality (see \cite[Lemma 2.4]{Vau1997}), one finds that when $t\ge 2k^2+2k-3$, one has
\begin{align*}
\int_\grm g(\alp)^te(-n\alp)\d\bfalp &\ll \left( \sup_{\alp\in \grm}|g(\alp)|\right)^{t-(2k^2+2k-4)}\int_0^1|g(\alp)|^{2k^2+2k-4}\d\bfalp \\
&\ll (X^{1-2^{1-k}+\eps})^{t-(2k^2+2k-4)}X^{(2k^2+2k-4)-k}\\
&\ll X^{t-k-2^{-k}}.
\end{align*}
Notice here, of course, that we could have employed the conclusion of Theorem \ref{theorem1.5} in place of Weyl's inequality. Meanwhile, the methods of \cite[\S4.4]{Vau1997} show that, under the same conditions on $t$, one has
$$\int_\grM g(\alp)^te(-n\alp)\,d\alp \sim \frac{\Gam (1+1/k)^t}{\Gam(t/k)}\grS_{t,k}(n)n^{t/k-1}+o(n^{t/k-1}),$$
where $\grS_{t,k}(n)$ is defined as in (\ref{1.10a}). Thus we deduce that for $t\ge 2k^2+2k-3$, one has
\begin{align*}
R_{t,k}(n)&=\int_\grM g(\alp)^te(-n\alp)\d\alp +\int_\grm g(\alp)^te(-n\alp)\d\alp \\
&=\frac{\Gam(1+1/k)^t}{\Gam(t/k)}\grS_{t,k}(n)n^{t/k-1}+o(n^{t/k-1}),
\end{align*}
whence $\Gtil(k)\le 2k^2+2k-3$. This completes the proof of Theorem \ref{theorem1.4}.\par

We take this opportunity to point out that L.-K. Hua investigated the problem of bounding the least integer $C_k$ such that, whenever $s\ge C_k$, one has
$$\oint |f_k(\bfalp;X)|^s\d\bfalp \ll X^{s-\frac{1}{2}k(k+1)+\eps},$$
and likewise the least integer $S_k$ such that, whenever $s\ge S_k$, one has
$$\oint |F_k(\bfbet ;X)|^s\d\bfbet \ll X^{s-\frac{1}{2}(k^2-k+2)+\eps},$$
pursuing in particular the situation for smaller values of $k$. His arguments involve a clever application of Weyl differencing in a style that we would describe in the single equation situation as underlying Hua's lemma. In Chapter 5 of \cite{Hua1965}, one finds tables recording the upper bounds
$$C_3\le 16,\quad C_4\le 46,\quad C_5\le 110,\dots $$
and
$$S_3\le 10,\quad S_4\le 32,\quad S_5\le 86,\dots .$$
The conclusion of Theorem \ref{theorem1.1} shows that $C_k\le 2k(k+1)$, an upper bound superior to the conclusions of Hua for $k\ge 4$. Meanwhile, as a consequence of the estimate (\ref{10.11}), one obtains the estimate contained in the following theorem.

\begin{theorem}\label{theorem10.1} Suppose that $k\ge 3$ and $s\ge k^2+k-2$. Then one has
$$\oint |F_k(\bfbet;X)|^{2s}\d\bfbet \ll X^{2s-\frac{1}{2}(k^2-k+2)+\eps}.$$
\end{theorem}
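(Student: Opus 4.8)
The plan is to extract the conclusion directly from the estimate (\ref{10.11}) already established in the proof of Lemma \ref{lemma10.1}, which asserts that
$$\oint |F_k(\bfbet;X)|^{2s}\d\bfbet \ll X^{k-2+\eps}J_{s,k}(2X)+X^{\eps-1}J_{s,k-1}(2X).$$
Thus it suffices to bound the two Vinogradov integrals on the right hand side under the hypothesis $s\ge k^2+k-2$, and the whole proof reduces to feeding in the strength of Theorem \ref{theorem1.1} together with a short exponent computation. (As a sanity check on the target exponent, note that the argument of the exponential in $F_k$ is a polynomial whose nonzero coefficients sit in degrees $1,\dots,k-2,k$, so the "expected" size of $\oint|F_k|^{2s}$ is $X^{2s}$ times $X^{-(1+\dots+(k-2)+k)}$, and $1+2+\dots+(k-2)+k=\frac{1}{2}(k^2-k+2)$.)

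For the first term I would use the observation recorded immediately before Corollary \ref{corollary10.2}, namely that $\eta_{s,k}\le 1$ whenever $s\ge k^2+k-2$ — this is itself a consequence of Theorem \ref{theorem1.1} via one application of H\"older's inequality to $J_{k^2+k,k}(X)$. Hence $J_{s,k}(2X)\ll X^{2s-\frac{1}{2}k(k+1)+1+\eps}$, and since $\frac{1}{2}k(k+1)-(k-1)=\frac{1}{2}(k^2-k+2)$, this gives
$$X^{k-2+\eps}J_{s,k}(2X)\ll X^{2s-\frac{1}{2}(k^2-k+2)+\eps}.$$
For the second term, one checks that $s\ge k^2+k-2\ge k(k-1)$ holds for every $k\ge 3$, so Theorem \ref{theorem1.1} yields $\eta_{s,k-1}=0$ and therefore $J_{s,k-1}(2X)\ll X^{2s-\frac{1}{2}k(k-1)+\eps}$. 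Using $\frac{1}{2}k(k-1)+1=\frac{1}{2}(k^2-k+2)$ then gives
$$X^{\eps-1}J_{s,k-1}(2X)\ll X^{2s-\frac{1}{2}(k^2-k+2)+\eps},$$
and adding the two contributions completes the argument.

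The main obstacle has, in effect, already been cleared: the substance of the theorem lies in the derivation of (\ref{10.11}) inside Lemma \ref{lemma10.1}, which rests on the Binomial-Theorem shift argument, the transference principle passing from (\ref{10.7}) to (\ref{10.8}), and Br\"udern's mean-value lemma. Granting that, the only point that needs care here is the purely arithmetic verification that the single constraint $s\ge k^2+k-2$ simultaneously delivers both $\eta_{s,k}\le 1$ and $s\ge k(k-1)$; once that is confirmed there is nothing further to prove, so I do not anticipate any genuine difficulty.
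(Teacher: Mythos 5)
Your argument is correct and coincides with the paper's own proof: both deduce the theorem from the bound (\ref{10.11}) together with the facts $\eta_{s,k}\le 1$ for $s\ge k^2+k-2$ and $\eta_{s,k-1}=0$ for $s\ge k(k-1)$, established in the discussion leading to Corollary \ref{corollary10.2}. The exponent arithmetic (that $\frac{1}{2}k(k+1)-(k-1)=\frac{1}{2}k(k-1)+1=\frac{1}{2}(k^2-k+2)$) and the verification that $k^2+k-2\ge k(k-1)$ are all checked correctly.
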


\begin{proof}
The discussion leading to Corollary \ref{corollary10.2} shows that $\eta_{s,k-1}=0$ for $s\ge k(k-1)$ and $\eta_{s,k}\le 1$ for $s\ge k^2+k-2$. The desired conclusion is therefore immediate from (\ref{3.8}), (\ref{3.11}) and (\ref{10.11}).
\end{proof}

Thus we have $S_k\le 2k^2+2k-4$, an upper bound superior to those of Hua for $k\ge 5$.

\section{A heuristic argument} We take the opportunity in this section to discuss a heuristic argument which delivers the bound
\begin{equation}\label{11.1}
J_{s,k}(X)\ll X^{2s-\frac{1}{2}k(k+1)+\eps}
\end{equation}
for $s\ge \frac{1}{2}k(k+1)$. In view of the lower bound (\ref{1.5}), of course, the bound (\ref{11.1}) cannot hold for $s<\frac{1}{2}k(k+1)$, so is in a strong sense best possible.\par

Our starting point is a heuristic interpretation of Lemma \ref{lemma6.1}. In the course of the proof of Lemma \ref{lemma6.1}, a critical role is played by the interpretation of the system of equations (\ref{6.3}) by means of the implied congruences (\ref{6.5}). In some sense, for each fixed choice of $\bfy$ in (\ref{6.5}), the conclusion of Lemma \ref{lemma4.1} indicates that there are at most $k!p^{\frac{1}{2}k(k-1)(a+b)}$ possible choices for $\bfx$ with $1\le \bfx\le p^{kb}$ and $\bfx\equiv \bfxi\pmod{p^{a+1}}$ for some $\bfxi\in \Xi_a(\xi)$. This is transformed via Cauchy's inequality into the statement that, with a compensating factor $k!p^{\frac{1}{2}k(k-1)(a+b)}$, the variables in (\ref{6.3}) are constrained by the additional congruence relations $\bfx\equiv \bfy\pmod{p^{kb}}$. Such an interpretation is embodied in the relation (\ref{6.8}).\par

An alternative interpretation, which we emphasise is heuristic in nature and not a statement of fact, is that, by relabelling variables if necessary, the congruences (\ref{6.5}) essentially amount in (\ref{6.3}) to the constraint $x_j\equiv y_j\pmod{p^{jb}}$ $(1\le j\le k)$, with an additional compensating factor of $k!p^{\frac{1}{2}k(k-1)a}$. Indeed, one can prove the initial statement that $x_j\equiv y_j\pmod{p^b}$ $(1\le j\le k)$ with precisely this compensating factor. Then, by fixing the variables $x_1,y_1$, and considering the system (\ref{6.5}) with $2\le j\le k$, one might suppose that a corresponding constraint $x_j\equiv y_j\pmod{p^{2b}}$ $(2\le j\le k)$ might be imposed. Then, by fixing the variables $x_2,y_2$, and considering the system (\ref{6.5}) with $3\le j\le k$, one seeks a corresponding constraint $x_j\equiv y_j\pmod{p^{3b}}$ $(3\le j\le k)$, and so on. Such a heuristic implies a new relation to replace (\ref{6.8}) of the shape
$$K^{\bfsig,\bftau}_{a,b}(X;\xi,\eta)\ll M^{\frac{1}{2}k(k-1)a}\sum_{\substack{1\le \zet_1\le p^b\\ \zet_1\equiv \xi\mmod{p^a}}}\dots \sum_{\substack{1\le \zet_k\le p^{kb}\\ \zet_k\equiv \xi\mmod{p^a}}}\calI(\bfzet),$$
where
$$\calI(\bfzet)=\oint \Bigl( \prod_{i=1}^k|\grf_{ib}(\bfalp;\zet_i)|^2\Bigr) |\grF^\bftau_b(\bfalp;\eta)|^{2u}\d\bfalp .$$
Such an assertion at least carries the weight of correctly accounting for the number of available residue classes, though of course one cannot hope for the implied degree of independence to be true in anything but an average sense.\par

From here, an application of H\"older's inequality leads to the bound
\begin{equation}\label{11.2}
K^{\bfsig,\bftau}_{a,b}(X;\xi,\eta)\ll M^{\frac{1}{2}k(k-1)a}\Bigl( \prod_{i=1}^kM^{ib-a}\Tet_{ib,b}(X;\eta)^{1/k}\Bigr),
\end{equation}
where
$$\Tet_{c,b}(X;\eta)=\max_{1\le \zet\le p^c}\oint |\grf_c(\bfalp;\zet)^{2k}\grF_b^\bftau (\bfalp;\eta)^{2u}|\d\bfalp .$$
A further application of H\"older's inequality shows as in (\ref{6.9c}) that
$$\Tet_{c,b}(X;\eta)\ll (J_{s+k}(X/M^b))^{1-k/s}(I_{b,c}(X))^{k/s},$$
and thus we find from (\ref{11.2}) that
$$K_{a,b}(X)\ll M^{\frac{1}{2}k(k-1)(a+b)+k(b-a)}(J_{s+k}(X/M^b))^{1-k/s}\prod_{i=1}^k(I_{b,ib}(X))^{1/s}.$$
Each mean value $I_{b,ib}(X)$ may be conditioned via Lemma \ref{lemma5.3}, and thus one deduces as in Lemma \ref{lemma6.2} that there exist integers $h_1,\dots ,h_k$, none too large in terms of $b$, with the property that
\begin{align}
\ldbrack K_{a,b}(X)\rdbrack \ll &\, M^{-k/(3s)}(X/M^b)^{\eta_{s+k}}\notag \\
&\, +X^\del (X/M^b)^{\eta_{s+k}(1-k/s)}\prod_{i=1}^kM^{-7h_i/4}\ldbrack K_{b,ib+h_i}(X)\rdbrack^{1/s}\label{11.4}.
\end{align}

\par It is (\ref{11.4}) which represents the critical step in our iteration. Starting from the relation
$$X^{\eta_{s+k}-\del}<\ldbrack J_{s+k}(X)\rdbrack \ll \ldbrack K_{0,1}(X)\rdbrack ,$$
one may apply (\ref{11.4}) successively to bound $X^{\eta_{s+k}-\del}$ in terms first of the $s$ expressions of the shape $\ldbrack K_{1,i+h_i}(X)\rdbrack^{1/s}$ $(1\le i\le s)$, then of $s^2$ expressions of the shape $\ldbrack K_{b,ib+h'_i}(X)\rdbrack^{1/s^2}$, and so on. This iteration may be analysed in a manner very similar to that used in the proof of Lemma \ref{lemma7.2}, though the complexity is now increased substantially. The important feature is the number of iterations taken before the exponents $ib+h_i$ occurring in (\ref{11.4}) become large in terms of $\tet$. In the argument of the proof of Lemma \ref{lemma7.2}, one finds that at the $n$th iteration, the relevant exponents have size roughly $k^n$. From the relation (\ref{11.4}), one obtains an explosively growing tree of chains of relations, with the exponents $b_n$ increasing from one step to the next by a factor close to $1,2,\ldots,k-1$ or $k$. When one considers the set of all chains, one finds that almost all possible chains have the property that the exponent $b_n$ grows on average like $(\frac{1}{2}(k+1))^n$. In order to see this, observe that if $l_1,\ldots ,l_n$ are the factors at each step of one possible chain, then by the Arithmetic-Geometric Mean inequality, one has
$$l_1\cdots l_n\le \Bigl( \frac{l_1+\dots +l_n}{n}\Bigr)^n.$$
If one randomly chooses $l_1,\dots ,l_n$ from $\{1,2,\ldots ,k\}$ with equal probability, then almost all values of $(l_1+\dots +l_n)/n$ will be concentrated towards the mean of $\{1,2,\ldots ,k\}$, which is $\frac{1}{2}(k+1)$. This is a consequence of the Central Limit Theorem. In this way, one sees that the number of steps permitted before the iteration begins to exhaust its usefulness is roughly $N$ if we take $\tet=\frac{1}{2}((k+1)/2)^{-N-1}$ at the outset in place of $\tet=\frac{1}{2}k^{-N-1}$. Note that the latter is indeed the value that we chose for $\tet$ in \S3 when $s=k^2$.\par

We are led now to a relation of similar shape to (\ref{7.a}), but replaced now by
\begin{equation}\label{11.4a}
X^{\eta_{s+k}(1+(s/k-1)(s/k)^{N-1}\tet)}\ll X^{\eta_{s+k}+k^2}.
\end{equation}
Note here that we have made use of the growth rate of the exponents $\psi_n$ from \S7, with scale factor $s/k$. Thus, when $s>\frac{1}{2}k(k+1)$, since now we have $\tet=\frac{1}{2}((k+1)/2)^{-N-1}$, we find that
$$(s/k-1)(s/k)^{N-1}\tet \gg \Bigl( \frac{s}{k(k+1)/2}\Bigr)^N,$$
which tends to infinity as $N$ tends to infinity. In particular, on taking $N$ sufficiently large, the relation (\ref{11.4a}) implies that $\eta_{s+k}=0$.\par

The above heuristic shows that when $s>\frac{1}{2}k(k+1)$, then one has
\begin{equation}\label{11.5}
J_{s+k}(X)\ll X^{2s-\frac{1}{2}k(k+1)+\eps}.
\end{equation}
One might complain that this fails to prove that the relation (\ref{11.5}) holds for $s=\frac{1}{2}k(k+1)$. Apart from anything else, on the face of it, the integer $s$ needs to be a multiple of $k$ in our treatment, so that one may need to require that $s\ge \frac{1}{2}k(k+3)$. But this issue may be circumvented. For this, one reinterprets the methods of this paper in the form of fractional moments of exponential sums along the lines of the author's work \cite{Woo1995a} on breaking classical convexity in Waring's problem. This was, in fact, the author's original approach to Theorem \ref{theorem1.1}, and feasible with sufficient effort. Such would permit the proof of (\ref{11.5}) with $s=\frac{1}{2}k(k+1)+\nu$, for any positive number $\nu$. But then an application of H\"older's inequality shows that (\ref{11.5}) holds with $s=\frac{1}{2}k(k+1)$ with the positive number $\eps$ bounded above by $\nu$. Taking $\nu$ sufficiently small completes the heuristic proof.\par

A final word is in order concerning the value of such a heuristic argument. A more sweeping heuristic of classical nature asserts that one should expect square-root cancellation in $f_k(\alp;X)$ when one subtracts the expected major arc approximation, and this leads to the conjectured estimate (\ref{11.5}) for $s\ge \frac{1}{2}k(k+1)$. This amounts to the assumption of very significant global rigid structure within the mean value $J_{s,k}(X)$. Our heuristic in this section also amounts to a structural assumption, but now of a rather weak congruential variety. This is, most assuredly, an unproven assumption, but a relatively modest one of local type. Thus one can say, at least, that the conjectured estimate (\ref{11.5}) for $s\ge \frac{1}{2}k(k+1)$ now rests on only a relatively mild assumption.

\bibliographystyle{amsbracket}
\providecommand{\bysame}{\leavevmode\hbox to3em{\hrulefill}\thinspace}

\end{document}